\newcommand\bref[3][blue]{%
    \begingroup%
    \hypersetup{linkcolor=#1}%
    \hyperlink{#2}{#3}%
    \endgroup}
\newtheorem{theorem}{Theorem}[section]
\newtheorem{thmintro}{Theorem}
\newaliascnt{lemma}{theorem}
\newtheorem{lemma}[lemma]{Lemma}
\newaliascnt{fact}{theorem}
\newaliascnt{corollary}{theorem}
\newtheorem{corollary}[corollary]{Corollary}
\newaliascnt{proposition}{theorem}
\newtheorem{proposition}[proposition]{Proposition}
\newaliascnt{df}{theorem}
\theoremstyle{definition}\newtheorem{df}[df]{Definition}
\newaliascnt{remark}{theorem}
\theoremstyle{remark}\newtheorem{remark}[remark]{Remark}
\numberwithin{equation}{section}
\DeclareMathOperator{\Ob}{Ob}
\DeclareMathOperator{\Mor}{Mor}
\DeclareMathOperator{\Hom}{Hom}
\DeclareMathOperator{\Rep}{Rep}
\DeclareMathOperator{\id}{id}
\newcommand{\GG}{\mathbb{G}}
\newcommand{\HH}{\mathbb{H}}
\newcommand{\KK}{\mathbb{K}}
\newcommand{\ww}{\mathrm{W}}
\newcommand{\WW}{{\mathds{V}\!\!\text{\reflectbox{$\mathds{V}$}}}}
\newcommand{\Ww}{\mathds{W}}
\newcommand{\wW}{\text{\reflectbox{$\Ww$}}\:\!}
\newcommand{\isimplied}{\text{\reflectbox{$\implies$}}\:\!}
\newcommand{\bigslant}[2]{{\raisebox{.2em}{$#1$}\left/\raisebox{-.2em}{$#2$}\right.}}
\newcommand{\wc}{{\text{---}\scriptscriptstyle{weak^*}}}
\newcommand\vnt{\bar{\otimes}}
\newcommand\comp{\!\circ\!}
\newcommand\BLtwo{\mathsf{B}(L^2(\mathbb{G}))}
\DeclareRobustCommand\flip{\bm\sigma}
\DeclareDocumentCommand\mult{ m g g}{%
    {
    \IfNoValueTF {#3} {
    \IfNoValueT {#2} {\mathnormal{M}({#1})} 
    \IfNoValueF {#2} {\mathnormal{M}({#1}\otimes{#2})}}
    {\mathnormal{M}({#1}\otimes{#2}\otimes{#3})} }}
\DeclareDocumentCommand\B{ m g }{%
    {\IfNoValueT {#2} {\mathsf{B}(\mathcal{#1})} 
    \IfNoValueF {#2} {\mathsf{B}(\mathcal{#1}_{#2})} }
}
\DeclareDocumentCommand\K{ m g }{%
    {\IfNoValueT {#2} {\mathnormal{K}(\mathcal{#1})} 
    \IfNoValueF {#2} {\mathnormal{K}(\mathcal{#1}_{#2})} }
}
\DeclareDocumentCommand\Cnot{ m g }{%
    {\IfNoValueT {#2} {C_0(\mathbb{#1})} 
    \IfNoValueF {#2} {C_0(\mathbb{#1}_{#2})} }
}
\DeclareDocumentCommand\Cnothat{ m g }{%
    {\IfNoValueT {#2} {C_0(\widehat{\mathbb{#1}})} 
    \IfNoValueF {#2} {C_0(\widehat{\mathbb{#1}}_{#2})} }
}
\DeclareDocumentCommand\Cunot{ m g }{%
    {\IfNoValueT {#2} {C^u_0(\mathbb{#1})} 
    \IfNoValueF {#2} {C^u_0(\mathbb{#1}_{#2})} }
}
\DeclareDocumentCommand\Cunothat{ m g }{%
    {\IfNoValueT {#2} {C^u_0(\widehat{\mathbb{#1}})} 
    \IfNoValueF {#2} {C^u_0(\widehat{\mathbb{#1}}_{#2})} }
}
\DeclareDocumentCommand\Linf{ m g }{%
    {\IfNoValueT {#2} {L^{\infty}(\mathbb{#1})} 
    \IfNoValueF {#2} {L^{\infty}(\mathbb{#1}_{#2})} }
}
\DeclareDocumentCommand\Linfhat{ m g }{%
    {\IfNoValueT {#2} {L^{\infty}(\widehat{\mathbb{#1}})} 
    \IfNoValueF {#2} {L^{\infty}(\widehat{\mathbb{#1}}_{#2})} }
}
\DeclareDocumentCommand\Lone{ m g }{%
    {\IfNoValueT {#2} {L^{1}(\mathbb{#1})} 
    \IfNoValueF {#2} {L^{1}(\mathbb{#1}_{#2})} }
}
\DeclareDocumentCommand\Lonehat{ m g }{%
    {\IfNoValueT {#2} {L^{1}(\widehat{\mathbb{#1}})} 
    \IfNoValueF {#2} {L^{1}(\widehat{\mathbb{#1}}_{#2})} }
}
\title{Hopf Images in Locally Compact Quantum Groups}
\author{Pawe{\l} J\'oziak}
\address{Institute of Mathematics of the Polish Academy of Sciences and  Institute of mathematics, University of Wroc{\l}aw, Poland}
\email{pjoziak@impan.pl}
\author{Pawe{\l} Kasprzak}
\address{Department of Mathematical Methods in Physics, Faculty of Physics, University of Warsaw, Poland}
  \email{pawel.kasprzak@fuw.edu.pl}
\author{Piotr M.~So{\l}tan} 
\address{Department of Mathematical Methods in Physics, Faculty of Physics, University of Warsaw, Poland}
\email{piotr.soltan@fuw.edu.pl}
\subjclass[2010]{Primary: 46L89 Secondary: 46L85, 46L52}
\keywords{Hopf image, locally compact quantum groups, quantum subgroups}
\begin{document}

\begin{abstract}
This manuscript is devoted to the study of the concept of a generating subset (a.k.a. Hopf image of a morphism) in the setting of locally compact quantum groups. The aim of this paper is to provide an accurate description of the Hopf image of a given morphism. We extend and unify the previously existing approaches for compact and discrete quantum groups and present some results that can shed light on some local perspective in the theory of quantum groups. In particular, we provide a characterization of fullness of Hopf image in the language of partial actions as well as in representation-theoretic terms in the spirit of representation $C^*$-categories, extending some known results not only to the broader setting of non-compact quantum groups, but also encompassing a broader setting of generating subsets.
\end{abstract}
\maketitle

\section*{Introduction}
The concept of a subgroup is central to understanding locally compact quantum groups as group-theoretic objects and accordingly it has been receiving an increasing interest in recent years, see e.g. \cite{BB09,BY14,BCV,DKSS,KKSopen,KSS,KS14b,KS15}. It arises naturally when studying quantum homogeneous spaces, as the natural examples of homogeneous spaces are of quotient type, see \cite{KS14a}, or when studying the behavior of quantum groups with respect to some natural constructions, see e.g. \cite{KSS,Ver04,Wang95, CHK16}. 

The starting point for this project was to improve the understanding of the notion of closed subgroup in the realm of locally compact quantum groups. Classically, in order for a non-empty closed subset $X\subset G$ of a locally compact group $G$, to be a group, it has to satisfy $X^2\subset X$ and, if $G$ is non-compact, also $X^{-1}=X$. Clearly, generically these conditions do not hold, and the way to cope with this is to consider the \emph{subgroup generated by the given subset $X$}, i.e.~the smallest closed subgroup containing it. One immediately realizes that this smallest subgroup is given by $\overline{\bigcup_{n\in\mathbb{Z}}X^n}$ (and in the compact case $\mathbb{Z}$ may be replaced by $\mathbb{N}$). 

This viewpoint was already utilized in \cite{BB10} in the case of Hopf ${}^*$-algebras and in \cite{SS16,BCV,ChirvaRFD} in the case of compact quantum groups, where analytic issues are minor, and it was implicit in various writings on discrete quantum groups, and formulated explicitly e.g. in \cite{Iz02,Ver05,Ver07}. Once the notion of generation was established in a satisfactory way in this restricted setting, the question of moving to a broader, and technically more involved, class of locally compact quantum groups arises naturally.

In this manuscript we study the following concept: let $\GG$ be a locally compact quantum group (in the sense of Kustermans-Vaes). Let $\mathnormal{B}$ be a $C^{\ast}$-algebra and let $\beta\in \Mor(\Cunot{G},\mathnormal{B})$ be a morphism of $C^*$-algebras (in the sense of \cite{SLW95a}). We think of it as the Gelfand dual of a map $\widehat{\beta}\colon\mathbb{X}\to\GG$ from a quantum space into a quantum group and ask what is the closed quantum subgroup (in the sense of Vaes, see \cite{DKSS}) of $\GG$ generated by $\widehat{\beta}(\mathbb{X})\subset\GG$.

Formally speaking, we consider the following category, which we denote by $\mathcal{C}_{\beta}$. Objects of $\mathcal{C}_{\beta}$ are triples $(\pi, \HH, \tilde{\beta})$ consisting of: a closed quantum subgroup $\HH$ of $\GG$ such that $\pi\in \Mor(\Cunot{G},\Cunot{H})$ is the associated morphism intertwining the coproducts and $\tilde{\beta}\comp\pi=\beta$ (as morphisms of $C^*$-algebras), i.e.~the map $\beta$ factors through the $C^*$-algebra of functions on the subgroup $\Cunot{H}$ (where $\HH$ is embedded into $\GG$ using $\pi$) and $\tilde{\beta}\comp\pi=\beta$ is the factorization. For two objects $\mathds{h}=(\pi, \HH, \tilde{\beta}),\mathds{k}=(\pi', \KK, \beta')\in \Ob(\mathcal{C}_{\beta})$, a morphism $\varphi\in \Mor_{\mathcal{C}_{\beta}}(\mathds{h},\mathds{k})$ is a $C^*$-algebra morphism $\varphi\in \Mor(\Cunot{K},\Cunot{H})$ (note that the direction of arrows in $\mathcal{C}_{\beta}$ is same as the direction of maps on the level of quantum groups, which is opposite to the ones on the level of $C^*$-algebras of functions), which intertwines the respective coproducts and such that the following diagram commutes:

{\centering
\begin{tikzpicture}
  [bend angle=36,scale=2,auto,
pre/.style={<-,shorten <=1pt,semithick},
post/.style={->,shorten >=1pt,semithick}]
\node (G) at (-1,1) {$\Cunot{G}$};
\node (B) at (1,1) {$\mathnormal{B}$}
edge [pre] node[auto,swap] {$\beta$} (G);
\node (H) at (0,0) {$\Cunot{H}$}
edge [pre] node[auto,swap] {$\pi$} (G)
edge [post] node[auto] {$\tilde{\beta}$} (B);
\node (K) at (-1,-1) {$\Cunot{K}$}
edge [pre] node[auto,swap] {$\pi'$} (G)
edge [post, bend right] node[auto,swap] {$\tilde{\beta'}$} (B)
edge [post] node[auto] {$\varphi$} (H);
 \end{tikzpicture}\vskip1em\centering\hypertarget{diagram}{Diagram 1: Morphisms in $\mathcal{C}_{\beta}$}\vskip1em}
The object we are interested in is the initial object of the category $\mathcal{C}_{\beta}$. Our study of Hopf images begins in \autoref{sec:hopfimageconstruction} by showing, among others, that
\begin{thmintro}\label{thmintro:existence}
 Given a locally compact quantum group $\GG$ and a morphism $\beta$ as above, there always exists the initial object of the category $\mathcal{C}_{\beta}$.
\end{thmintro}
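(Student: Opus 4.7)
The plan is to construct the initial object of $\mathcal{C}_\beta$ as a suitable ``meet'' of all closed quantum subgroups through which $\beta$ factors, and then to verify its universal property.

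\textbf{Setup.} First I would note that $\mathcal{C}_\beta$ is non-empty: the triple $(\id_{\Cunot{G}},\GG,\beta)$ is an object. Next, since closed quantum subgroups of $\GG$ in the sense of Vaes correspond to certain coproduct-compatible von Neumann subalgebras of $\Linfhat{G}$, the class of all such subgroups is actually a set; up to isomorphism the objects of $\mathcal{C}_\beta$ are then also a set, since the factorization $\tilde\beta$ in any such triple is uniquely determined by $\beta$ and $\pi$, as $\pi$ is surjective. I index these by $\alpha\in A$, with triples $(\pi_\alpha,\HH_\alpha,\tilde\beta_\alpha)$.

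\textbf{Candidate.} The natural candidate $\HH_0$ is defined dually by $L^\infty(\widehat{\HH}_0):=\bigcap_{\alpha\in A}L^\infty(\widehat{\HH}_\alpha)\subseteq\Linfhat{G}$. An intersection of coproduct-intertwining von Neumann subalgebras is again coproduct-intertwining, and invoking known permanence results (e.g.\ from \cite{DKSS}) this intersection corresponds to a closed Vaes subgroup of $\GG$. Let $\pi_0\in\Mor(\Cunot{G},C^u_0(\HH_0))$ be the associated morphism. For each $\alpha$, the inclusion $\HH_0\leq\HH_\alpha$ supplies a unique coproduct-intertwining $q_\alpha\in\Mor(C^u_0(\HH_\alpha),C^u_0(\HH_0))$ with $\pi_0=q_\alpha\comp\pi_\alpha$.

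\textbf{Main obstacle: factorization.} I expect the crucial step to be producing $\tilde\beta_0\in\Mor(C^u_0(\HH_0),B)$ with $\tilde\beta_0\comp\pi_0=\beta$, equivalently $\ker\pi_0\subseteq\ker\beta$. This is not immediate: $\HH_0$ is strictly smaller than any single $\HH_\alpha$, so $\ker\pi_0$ can a priori exceed the closed two-sided ideal generated by $\bigcup_\alpha\ker\pi_\alpha$. The approach I would take is to show that these two ideals in fact coincide: the inclusion $\supseteq$ follows at once from $\pi_0=q_\alpha\comp\pi_\alpha$, whereas the reverse inclusion amounts to transferring the dual intersection $\bigcap_\alpha L^\infty(\widehat{\HH}_\alpha)$ back to the universal $C^*$-algebra. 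Concretely, this means exploiting the compatibility of the $\pi_\alpha$ with the bicharacter and the reduced/universal bi-description of $\GG$, together with density of $L^1$-slicings, to express $\ker\pi_0$ in terms of data that is already annihilated by each $\pi_\alpha$. This is where one genuinely uses the Vaes (rather than merely Woronowicz) framework, and I expect it to be the most delicate technical point of the argument.

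\textbf{Universality.} Finally, given any object $(\pi',\KK,\tilde\beta')\in\Ob(\mathcal{C}_\beta)$, the inclusion $\HH_0\leq\KK$ built into the construction provides a canonical coproduct-intertwining $\varphi\in\Mor(\Cunot{K},C^u_0(\HH_0))$ with $\pi_0=\varphi\comp\pi'$. Surjectivity of $\pi'$ combined with $\tilde\beta_0\comp\pi_0=\beta=\tilde\beta'\comp\pi'$ then forces $\tilde\beta_0\comp\varphi=\tilde\beta'$ and yields the uniqueness of $\varphi$, so $(\pi_0,\HH_0,\tilde\beta_0)$ satisfies the universal property of the initial object.
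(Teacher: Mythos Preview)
Your overall strategy is sound and, in fact, your intersection $\bigcap_\alpha L^\infty(\widehat{\HH}_\alpha)$ coincides with the paper's minimal Baaj--Vaes subalgebra. You have also correctly identified the genuine obstacle: showing that $\beta$ factors through $\pi_0$. However, your proposed attack on it---proving that $\ker\pi_0$ equals the closed ideal generated by $\bigcup_\alpha\ker\pi_\alpha$---is both unclear (it is not obvious that this equality holds in the quantum setting, and you give no concrete mechanism) and unnecessarily indirect. This is the gap in your proposal.

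The paper does not compare kernels at all. Instead it builds the candidate from below rather than from above. Set $X=(\id\otimes\beta)\wW^{\GG}\in\mult{\Cnothat{G}}{\mathnormal{B}}$ and let $\mathsf{M}_1=\{(\id\otimes\omega)X:\omega\in\mathnormal{B}^*\}''\subseteq\Linfhat{G}$. The key lemma (\autoref{lem:minimality}) is a clean factorization criterion: for a Vaes-closed subgroup $\KK\subset\GG$ with embedding $\gamma\colon\Linfhat{K}\hookrightarrow\Linfhat{G}$, one has $(\pi_\KK,\KK,\tilde\beta)\in\mathcal{C}_\beta$ if and only if $\mathsf{M}_1\subseteq\gamma(\Linfhat{K})$. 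The forward direction is a short computation with \eqref{eq:reduceddualhomo2}. The backward direction is the point you are missing: since $X$ satisfies $(\Delta_{\widehat{\GG}}\otimes\id)X=X_{23}X_{13}$ and its slices already lie in $\gamma(\Linfhat{K})$, one has $X\in\mult{\Cnothat{K}}{\mathnormal{B}}$ and the same identity holds with $\Delta_{\widehat{\KK}}$; thus $\flip(X^*)$ is a representation of $\widehat{\KK}$, and Kustermans' universal property of $\Ww^{\KK}$ (\autoref{thm:kustermans}) \emph{produces} the required $\tilde\beta\in\Mor(\Cunot{K},\mathnormal{B})$. No kernel comparison is needed.

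Once this criterion is in hand, your intersection and the paper's smallest Baaj--Vaes subalgebra containing $\mathsf{M}_1$ visibly coincide, factorization through $\HH_0$ is immediate from the ``if'' direction, and your universality argument goes through unchanged. The moral: the decisive ingredient is the representation-theoretic characterization of $\mathcal{C}_\beta$ via the slices of $X$, not an ideal computation in $\Cunot{G}$.
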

This initial object is then called the Hopf image of $\beta$, following the situation studied first in the algebraic setting by T. Banica and J. Bichon in \cite{BB10}. When this initial object happens to be $(\id,\GG,\beta)$, we then say that $\beta$ is a \emph{generating morphism} (this was earlier known as \emph{inner faithful morphism} and as \emph{morphism faithful in the discrete quantum group sense} in the context of Hopf ${}^*$-algebras, but the latter term is long and does not extend nicely beyond the compact quantum group setting, and the former might unnecessarily suggest some connections to the adjoint action).

The prominent examples of induction from subgroups of $\GG$ to $\GG$ of the following properties: Connes' embeddability of $\Linf{G}$, given in \cite{BCV}, and residual fininite dimensionality of $\mathcal{O}(\GG)$, given in \cite{ChirvaRFD}, puts the setting of quantum group generated by a family of its subgroups in the center of the study of Hopf images. We analyze an analogous situation in \autoref{sec:twosubgroups}. 

Intriguingly, the closed quantum subgroups $\HH\subset\GG$ are characterized by the so-called Baaj-Vaes subalgebras of $\Linfhat{G}$ (the precise definitions are given in \autoref{sec:homosubgr}). It follows from our discussion (cf. \autoref{thm:BVclosure}) that the most natural examples of invariant subalgebras of $\Linfhat{G}$ -- the ones coming from representations of $\GG$ -- satisfy the following phenomenon: if they are $\tau$-preserved (which is the case e.g. for quantum groups that are compact, discrete or of Kac type), they are automatically Baaj-Vaes. This means that in many examples part of the assumptions of the Baaj-Vaes theorem are actually implied by sole invariance. It is not known to the authors whether this phenomenon holds in full generality.

The Hopf image construction highlights a certain local perspective in the theory of quantum groups. This rather vague idea can be forged into concrete statements. For instance, classically a ho\-mo\-mor\-phism $G\to K$ is uniquely determined by its values on the generating set. A similar statement can be proved in the quantum case:

\begin{thmintro}\label{thmintro:separation}
With the notation as above, assume that $\beta$ is a generating morphism and consider two ho\-mo\-mor\-phisms $\GG\to\KK$ described by $\varphi,\tilde{\varphi}\in\Mor(\Cunot{K},\Cunot{G})$. If $\beta\comp\varphi=\beta\comp\tilde{\varphi}$, then $\varphi=\tilde{\varphi}$. 
\end{thmintro}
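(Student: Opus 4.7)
My plan is to lift to the quantum setting the classical fact that two continuous group homomorphisms agreeing on a generating subset must coincide, by constructing a quantum analogue of the equaliser subgroup $H = \{g : \widehat{\varphi}(g) = \widehat{\tilde{\varphi}}(g)\}$ and then invoking the universal property of the Hopf image. Concretely, I would construct a closed quantum subgroup $\HH \subset \GG$ in the sense of Vaes, together with a morphism $\pi \in \Mor(\Cunot{G}, \Cunot{H})$ such that $\pi \circ \varphi = \pi \circ \tilde{\varphi}$, and such that $\pi$ is universal among $C^*$-algebra morphisms out of $\Cunot{G}$ with this property. On the dual side, I expect this to amount to taking the intersection of all Baaj--Vaes subalgebras of $\Linfhat{G}$ whose associated closed subgroups equalise $\varphi$ and $\tilde{\varphi}$, and then applying the Baaj--Vaes correspondence recalled in \autoref{sec:homosubgr} to recover $\HH$ and $\pi$.

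The hypothesis $\beta \circ \varphi = \beta \circ \tilde{\varphi}$ places $\beta$ among the morphisms equalising $\varphi$ and $\tilde{\varphi}$, so by the universal property of $\pi$ there is a unique $\tilde{\beta} \in \Mor(\Cunot{H}, B)$ with $\tilde{\beta} \circ \pi = \beta$. Thus $(\pi, \HH, \tilde{\beta}) \in \Ob(\mathcal{C}_\beta)$. Since $\beta$ is generating, \autoref{thmintro:existence} together with the definition of ``generating morphism'' identifies $(\id, \GG, \beta)$ as the initial object of $\mathcal{C}_\beta$, so there exists a unique morphism in $\mathcal{C}_\beta$ from $(\id, \GG, \beta)$ to $(\pi, \HH, \tilde{\beta})$; examining the defining diagram of $\mathcal{C}_\beta$, this is a $C^*$-morphism $\mu \in \Mor(\Cunot{H}, \Cunot{G})$ satisfying $\mu \circ \pi = \id_{\Cunot{G}}$. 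Injectivity of $\pi$ follows from the existence of such a left inverse, and combined with its surjectivity (built into the Vaes notion of closed subgroup), $\pi$ must be an isomorphism. Hence $\HH = \GG$ and $\varphi = \tilde{\varphi}$.

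The principal obstacle is the construction of the equaliser subgroup in the first step. Unlike in the classical case, where the equaliser of two group homomorphisms is automatically a closed subgroup, in the quantum setting one must verify that the universal equalising quotient of $\Cunot{G}$ indeed arises from a closed quantum subgroup in the Vaes sense. This is where the Baaj--Vaes correspondence is essential, and where the coproduct-intertwining assumption on $\varphi$ and $\tilde{\varphi}$ enters: it should be what ensures that the relevant intersection of Baaj--Vaes subalgebras is itself Baaj--Vaes. A subsidiary subtlety is that the universal property of $\pi$ must hold for arbitrary $C^*$-morphisms out of $\Cunot{G}$, not just quantum group morphisms, since the target $B$ is merely a $C^*$-algebra.
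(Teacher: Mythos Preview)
Your overall strategy---build the equaliser subgroup of $\varphi,\tilde\varphi$, show $\beta$ factors through it, then invoke initiality of $(\id,\GG,\beta)$---is appealing, but the proposal has a genuine gap at the first step, and fixing it essentially forces you back into the paper's argument.

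First, the hint you give for the construction is in the wrong direction. You propose taking the \emph{intersection} of all Baaj--Vaes subalgebras of $\Linfhat{G}$ whose associated subgroups equalise $\varphi$ and $\tilde\varphi$. But the trivial subgroup always equalises any pair of Hopf $*$-homomorphisms (both $\varepsilon_{\GG}\comp\varphi$ and $\varepsilon_{\GG}\comp\tilde\varphi$ equal $\varepsilon_{\KK}$), so $\mathbb{C}\mathds{1}$ is always in your family and the intersection is $\mathbb{C}\mathds{1}$. The equaliser should be the \emph{largest} such subgroup, not the smallest.

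Second, even after correcting the direction, two substantial tasks remain that you do not address. You would need to show that the equaliser carries a Baaj--Vaes structure: the natural candidate is $\mathsf{N}=\{m\in\Linfhat{G}:\hat\rho(m)=\hat{\tilde\rho}(m)\}$, where $\hat\rho,\hat{\tilde\rho}$ are the dual right quantum group homomorphisms, and checking invariance under $\widehat\Delta$, $\widehat R$, $\widehat\tau_t$ is not automatic. More importantly, you need the $C^*$-universal property so that $\beta$ (a mere $C^*$-morphism) factors through $\pi_{\HH}$. By \autoref{lem:minimality} this is equivalent to $\mathsf{M}_1\subseteq\mathsf{N}$, i.e.\ to showing that $\hat\rho$ and $\hat{\tilde\rho}$ agree on every slice $(\id\otimes\omega)X$. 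But this is precisely the core computation of the paper's proof (\autoref{lem:composition} followed by slicing $\tilde V_{12}^*X_{23}\tilde V_{12}=V_{12}^*X_{23}V_{12}$ on the third leg). So your approach does not avoid the paper's key step; it adds an extra layer (verifying $\mathsf{N}$ is Baaj--Vaes) on top of it.

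The paper instead bypasses the equaliser construction entirely: it shows directly that $\hat\rho$ and $\hat{\tilde\rho}$ agree on $\mathsf{M}_1$, then uses $(\widehat\tau_t^{\KK}\otimes\tau_t^{\GG})V=V$ to extend this to each $\widehat\tau_t(\mathsf{M}_1)$, and finally invokes \autoref{thm:BVclosure} together with normality to conclude $\hat\rho=\hat{\tilde\rho}$ on all of $\Linfhat{G}=\mathsf{M}_{BV}$. This is shorter and avoids having to prove that the equaliser is a Vaes-closed subgroup.
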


For $\GG$ discrete the converse is also true in the following sense: if a quantum subset has the property that ho\-mo\-mor\-phisms are uniquely determined by the values they attain on this set, it is generating.

We address the problem of describing the Hopf image of a given morphism by means of other objects related to it. Associated to the map $\beta$, philosophically being the Gelfand dual of a map $\mathbb{X}\to\GG$, there are: restrictions of representations of $\GG$ to $\mathbb{X}$; \emph{partial action} by right shifts $\mathbb{X}\curvearrowright\GG$ and a representation of the dual quantum group $X\in\Linfhat{G}\vnt\B{H}$. We have the following result expressing generation in these terms:
\begin{thmintro}\label{thmintro:betarestriction}With the notation as above, consider the following four statements:
  \begin{enumerate}[label={\normalfont(\roman*)}]
  \item \label{it:betares1} The morphism $\beta$ is generating.
  \item \label{it:betares2} $\{(\id\otimes\omega)X \mid \omega\in \mathnormal{B}^*\}''=\Linfhat{G}$.
  \item \label{it:betares3} The partial action $\mathbb{X}\curvearrowright\GG$ is ergodic.
  \item \label{it:betares4} any two distinct representations of $\GG$, when restricted to $\mathbb{X}$, remain distinct.
   \end{enumerate}
 We have \ref{it:betares1}$\isimplied$\ref{it:betares2}$\iff$\ref{it:betares3}$\iff$\ref{it:betares4}. Moreover, \ref{it:betares1}$\implies$\ref{it:betares2} provided that $\GG$ is compact or discrete or $\tau^u_t(\ker(\beta))\subseteq\ker(\beta)$ for all $t\in\mathbb{R}$ (in particular, if $\GG$ is of Kac type).
\end{thmintro}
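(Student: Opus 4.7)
The plan is to establish the three equivalences (ii)$\iff$(iii)$\iff$(iv) by direct unpacking of the objects attached to $\beta$, and then to connect them with the Hopf-image condition (i) through the classification of closed quantum subgroups of $\GG$ by Baaj--Vaes subalgebras of $\Linfhat{G}$ developed in \autoref{sec:homosubgr}. Throughout, let $X$ denote the unitary corepresentation of $\GG$ associated to $\beta$ via the universal property of $\Cunot{G}$, and write $N:=\{(\id\otimes\omega)X\mid\omega\in\mathnormal{B}^{*}\}''\subseteq\Linfhat{G}$ for the von Neumann algebra generated by the matrix coefficients of $X$.

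For the equivalence of (ii) and (iii), I would identify the partial action $\mathbb{X}\curvearrowright\GG$ as the action implemented on $\Linf{G}$ by $X$ on the GNS Hilbert space of the right Haar weight; its fixed-point algebra is then the commutant of $N$ inside $\Linf{G}$, and ergodicity becomes exactly the condition $N=\Linfhat{G}$. For (ii)$\iff$(iv), I would use that every representation of $\GG$ is determined by its matrix coefficients in $\Linfhat{G}$: two representations have the same restriction to $\mathbb{X}$ precisely when the difference of their associated dual elements pairs trivially with all elements of $N$, so separation of restrictions is equivalent to $N=\Linfhat{G}$.

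For (ii)$\Rightarrow$(i): assume $N=\Linfhat{G}$ and suppose $\beta=\tilde\beta\comp\pi$ factors through some closed quantum subgroup $\HH\subseteq\GG$. Then $X=(\widehat{\pi}\otimes\id)Y$ for a corepresentation $Y$ of $\HH$, where $\widehat{\pi}\colon\Linfhat{H}\hookrightarrow\Linfhat{G}$ is the dual embedding of Baaj--Vaes subalgebras. Consequently $N\subseteq\Linfhat{H}$, and (ii) forces $\Linfhat{H}=\Linfhat{G}$, i.e.\ $\HH=\GG$. Thus the only closed quantum subgroup factoring $\beta$ is $\GG$ itself, which is (i).

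The reverse implication (i)$\Rightarrow$(ii) under the extra hypothesis is the main obstacle. The algebra $N$ is always a left coideal subalgebra of $\Linfhat{G}$ preserved by the unitary antipode $\widehat{R}$, both properties following from $X$ being a unitary corepresentation. By \autoref{thm:BVclosure}, $N$ will be Baaj--Vaes as soon as it is also invariant under the dual scaling group $\widehat{\tau}$. The hypothesis $\tau^{u}_t(\ker\beta)\subseteq\ker\beta$ is designed precisely to transfer, via the intertwining of $\tau$ and $\widehat{\tau}$ supplied by the multiplicative unitary, into $\widehat{\tau}_t$-invariance of $N$; in the compact, discrete and Kac cases this holds automatically (trivially in the Kac case since $\tau$ is trivial, while in the compact and discrete cases $\tau$ preserves the Hopf-algebraic core of matrix coefficients). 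Once $N$ is Baaj--Vaes, it corresponds to a closed quantum subgroup $\HH\subseteq\GG$ through which $\beta$ factors by construction, so (i) forces $\HH=\GG$ and $N=\Linfhat{G}$. The delicate step, which would require careful unpacking, is the transfer of $\tau^{u}$-invariance of $\ker\beta$ to $\widehat{\tau}$-invariance of $N$; I would carry this out using the bicharacter presentation of $X$ together with the explicit relation between the scaling groups on $\GG$ and $\widehat{\GG}$.
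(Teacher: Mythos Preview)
Your overall architecture aligns with the paper's, but two steps are not correct as stated.

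First, for (iv)$\Rightarrow$(ii) your sketch (``two representations have the same restriction to $\mathbb{X}$ precisely when the difference of their associated dual elements pairs trivially with all elements of $N$'') does not correspond to a workable argument: representations of $\GG$ are parametrised by $*$-homomorphisms out of $\Cunothat{G}$, not by elements of $\Linfhat{G}$ or of its predual, so there is no linear ``difference'' to pair with $N$. The paper argues by contrapositive and construction: if $N\subsetneq\Linfhat{G}$, choose a unitary $u\in N'\setminus\Linfhat{G}'$ and set $U=(u\otimes\mathds{1})\ww(u^{*}\otimes\mathds{1})$; then $U\neq\ww$, but $U^{\beta}=\ww^{\beta}$ because the first leg of $X=\ww^{\beta}$ lies in $N$ and therefore commutes with $u$. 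This explicit pair of representations with equal $\beta$-restriction is the actual content of (iv)$\Rightarrow$(ii); your pairing heuristic does not supply it.

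Second, in your treatment of (i)$\Rightarrow$(ii) you assert that $N$ is ``always \dots\ preserved by the unitary antipode $\widehat{R}$, both properties following from $X$ being a unitary corepresentation''. This is not justified. From $X$ being a unitary (anti)representation one gets $\widehat{S}\bigl((\id\otimes\omega)X\bigr)=(\id\otimes\omega)(X^{*})$, but $\widehat{S}=\widehat{R}\comp\widehat{\tau}_{i/2}$, so $\widehat{R}$ applied to a generator of $N$ is $\widehat{\tau}_{-i/2}$ of another generator, and this lies in $N$ only once $\widehat{\tau}$-invariance of $N$ has already been established. The paper's logical order is the reverse of yours: one first obtains $\widehat{\tau}_t$-invariance of $N$ from the hypothesis $\tau^{u}_t(\ker\beta)\subseteq\ker\beta$ (or from compactness/discreteness), and only then deduces $\widehat{R}$-invariance via the analytic-continuation argument---this is precisely \autoref{thm:BVclosure}. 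Your remark that the extra hypothesis is ``designed precisely'' to yield $\widehat{\tau}$-invariance is correct; it is the claim of automatic $\widehat{R}$-invariance that should be dropped.

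A minor point: $N$ is not merely a left coideal but fully $\widehat{\Delta}$-invariant, i.e.\ $\widehat{\Delta}(N)\subseteq N\vnt N$, since $(\widehat{\Delta}\otimes\id)X=X_{23}X_{13}$; this is what is actually used.
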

The ingredients of \autoref{thmintro:betarestriction} are described in detail in course of constructing the Hopf image.

We later move to the study of generation-type questions in the language of representation category of the quantum group in question. Assume $\{\HH_i\}_{i\in I}$ is a family of closed subgroups of $\GG$ and denote by $U^{\HH_i}$ the restriction of a representation $U$ of $\GG$ to $\HH_i$. Recall that $\Hom_{\GG}(U,\tilde{U})$ is the set of intertwiners between $U$ and $\tilde{U}$ as representations of $\GG$. We have

\begin{thmintro}\label{thmintro:promofintertwin}
 $\GG$ is generated by $(\HH_i)_{i\in I}$ if and only if for all pairs of representations $U,\tilde{U}$ of $\GG$ we have that
 \[\Hom_{\GG}(U,\tilde{U})=\bigcap_{i\in I}\Hom_{\HH_i}(U^{\HH_i},\tilde{U}^{\HH_i})\]
\end{thmintro}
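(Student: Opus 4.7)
The inclusion $\Hom_{\GG}(U,\tilde{U})\subseteq\bigcap_{i\in I}\Hom_{\HH_i}(U^{\HH_i},\tilde{U}^{\HH_i})$ is immediate, since applying $\pi_i\otimes\id$ to any $\GG$-intertwining relation yields an $\HH_i$-intertwining relation. For the reverse inclusion, under the hypothesis that $\GG$ is generated by $(\HH_i)_{i\in I}$, a standard $2\times 2$-matrix trick---replacing the pair $(U,\tilde U)$ by $U\oplus\tilde U$ and the map $T\colon\mathcal{H}\to\tilde{\mathcal{H}}$ by the off-diagonal element $\bigl(\begin{smallmatrix}0 & 0\\ T & 0\end{smallmatrix}\bigr)$ on $\mathcal{H}\oplus\tilde{\mathcal{H}}$---reduces the question to proving, for every single representation $U\in\Linf{G}\vnt\B{H}$, that $\End_{\GG}(U)=\bigcap_{i\in I}\End_{\HH_i}(U^{\HH_i})$.

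Let $\mathcal{N}_U\subseteq\B{H}$ be the von Neumann algebra generated by the slices $\{(\omega\otimes\id)(U):\omega\in\Lone{G}\}$, and let $\mathcal{N}_U^{\HH_i}$ be the analogous algebra attached to $U^{\HH_i}=(\pi_i\otimes\id)(U)$. Then $\End_{\GG}(U)=\mathcal{N}_U'$ and $\End_{\HH_i}(U^{\HH_i})=(\mathcal{N}_U^{\HH_i})'$, so after taking commutants the target identity becomes
\[
\mathcal{N}_U=\Bigl(\bigcup_{i\in I}\mathcal{N}_U^{\HH_i}\Bigr)''.
\]
The representation $U$ induces a normal unital $*$-homomorphism $\Phi_U\colon\Linfhat{G}\to\B{H}$ with image $\mathcal{N}_U$, carrying each Baaj-Vaes subalgebra $\Linfhat{H}{i}\hookrightarrow\Linfhat{G}$ onto $\mathcal{N}_U^{\HH_i}$. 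By normality of $\Phi_U$, the preceding identity reduces further to
\[
\Bigl(\bigcup_{i\in I}\Linfhat{H}{i}\Bigr)''=\Linfhat{G}.
\]

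The last displayed equality is obtained by applying \autoref{thmintro:betarestriction} to the morphism $\beta:=\bigoplus_{i\in I}\pi_i\in\Mor\bigl(\Cunot{G},\bigoplus_{i\in I}\Cunot{H}{i}\bigr)$: by construction $\beta$ is generating precisely when $\GG$ is generated by $(\HH_i)_{i\in I}$, and since each $\pi_i$ intertwines the scaling automorphisms, the ideal $\ker\beta=\bigcap_i\ker\pi_i$ is automatically $\tau^u$-invariant, so the hypothesis guaranteeing \ref{it:betares1}$\Rightarrow$\ref{it:betares2} in \autoref{thmintro:betarestriction} is met, and \ref{it:betares2} delivers the generation statement required. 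For the converse implication, applying the hypothesized intertwiner equality to the right regular representation $\ww^{\GG}\in\Linf{G}\vnt\BLtwo$---whose slices span $\Linfhat{G}$ and whose restrictions have slices spanning each $\Linfhat{H}{i}$---yields the same von Neumann identity, so by the Baaj-Vaes correspondence no proper closed quantum subgroup of $\GG$ can contain every $\HH_i$, showing that $\GG$ is generated by the family.

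The main technical step, and the one I expect to need most care, is the transfer of generation from $\Linfhat{G}$ to $\mathcal{N}_U$ through $\Phi_U$: this uses normality of $\Phi_U$ in an essential way together with the (automatic) $\tau^u$-invariance of $\ker\beta$, which together ensure that \autoref{thmintro:betarestriction} applies without any additional global hypothesis on $\GG$.
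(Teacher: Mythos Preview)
Your reduction to self-intertwiners via the $2\times 2$ matrix trick, the identification $\End_{\GG}(U)=\mathcal{N}_U'$ and $\End_{\HH_i}(U^{\HH_i})=(\mathcal{N}_U^{\HH_i})'$, and the converse argument via the regular representation are all correct and match the paper's strategy. The problem is precisely the step you flag as needing care: the normal unital $*$-homomorphism $\Phi_U\colon\Linfhat{G}\to\B{H}$ does not exist in general.

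A representation $U$ of $\GG$ corresponds, via \autoref{thm:kustermans}, to a morphism $\phi_U\in\Mor(\Cunothat{G},\K{H})$ from the \emph{universal} $C^*$-algebra, and the slices of $U$ are images under $\phi_U$ of slices of $\Ww^{\GG}$. There is no reason for $\phi_U$ to factor through the reducing morphism $\Lambda_{\widehat{\GG}}\colon\Cunothat{G}\to\Cnothat{G}$, let alone extend normally to $\Linfhat{G}$. Concretely, take $\GG$ discrete with $\widehat{\GG}$ non-amenable (say $\GG=\widehat{F_2}$): then the trivial representation of $\GG$ gives a character on $C^*(F_2)=\Cunothat{G}$ which cannot extend to a normal functional on the $\mathrm{II}_1$ factor $L(F_2)=\Linfhat{G}$. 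So your transfer argument breaks down exactly here, and with it the deduction of $\mathcal{N}_U=\bigl(\bigcup_i\mathcal{N}_U^{\HH_i}\bigr)''$ from the generation hypothesis on $\Linfhat{G}$.

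The paper avoids this obstacle by replacing the putative $\Phi_U$ with the map $\theta\colon\Linfhat{G}\to\Linfhat{G}\vnt\mathsf{B}$, $a\mapsto X(a\otimes\mathds{1})X^*$, where $X=\flip(U)^*$. This map is implemented by a unitary and is therefore automatically normal. The paper then proves (\autoref{lem:selfintertwiners}) that $t\in\End_{\GG}(U)$ if and only if $\mathds{1}\otimes t$ commutes with $\theta(\Linfhat{G})$, and a compatibility lemma (\autoref{lem:restrictcoincide}) showing $(\gamma_i\otimes\id)\comp\tilde{\theta}^i=\theta\comp\gamma_i$ on $\Linfhat{H}{i}$. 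With these two ingredients, the generation hypothesis $\Linfhat{G}=\bigl(\bigcup_i\gamma_i(\Linfhat{H}{i})\bigr)''$ transfers directly---by normality of $\theta$, not of a nonexistent $\Phi_U$---to the desired intertwiner statement. (Incidentally, the identity $\mathcal{N}_U=\bigl(\bigcup_i\mathcal{N}_U^{\HH_i}\bigr)''$ that you aim for is true and is essentially \autoref{thm:promtheta}\ref{it:promtheta2}, but its proof in the paper again goes through $\theta$.)
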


The manuscript is organized as follows: \autoref{sec:preliminaries} is devoted to establishing notation and conventions. We collect also from various places in the literature some ingredients of the theory of locally compact quantum groups that are most relevant to our constructions. In \autoref{sec:hopfimageconstruction} we deal with \autoref{thmintro:existence} and reveal the structure of objects useful in the later study of Hopf images. In \autoref{sec:twosubgroups} we analyze the case of the generating set coming from a family of subgroups. \autoref{sec:separation} is devoted to the proof of \autoref{thmintro:separation} and its converse. In \autoref{sec:betarestriction} we introduce the concept of restricting a representation to a subset and prove a reformulation of \autoref{thmintro:betarestriction}. In \autoref{sec:intertwiners} we study the concept of restriction of a representation to a subset from the point of view of intertwiners. In particular, we derive \autoref{thmintro:promofintertwin}. \autoref{sec:examples} discusses some examples and applications of the Hopf image construction.

\section{Preliminaries}\label{sec:preliminaries}
\subsection{\texorpdfstring{$C^{\ast}$}{Cst}-algebras, von Neumann algebras etc}
The symbol $\flip$ denotes the flip, i.e.~unique extension of the map $\mathnormal{A}\otimes\mathnormal{B}\ni a\otimes b\mapsto b\otimes a\in\mathnormal{B}\otimes\mathnormal{A}$. We use the leg numbering notation, which is now commonly understood: for $T\in\mult{\mathnormal{A}}{\mathnormal{C}}$, we denote by $T_{13}\in\mult{\mathnormal{A}}{\mathnormal{B}}{\mathnormal{C}}$ the operator given by $(\flip\otimes\id)(\mathds{1}\otimes T)$. Similarly, if $t\in\mult{\mathnormal{B}}$, then $\mathds{1}\otimes t\otimes\mathds{1}\in\mult{\mathnormal{A}}{\mathnormal{B}}{\mathnormal{C}}$ will be denoted by $t_2$, and so on. 

For two vectors $\xi,\eta\in\mathcal{H}$, by the functional $\omega_{\xi,\eta}\in\B{H}_*$ we mean a map $T\mapsto\langle\xi|T\eta\rangle$ (note that the inner product is linear in the right variable).

We will use the following lemma a few times:
\begin{lemma}\label{lem:tensconv}
 Let $\mathnormal{A}\subseteq\B{H},\mathnormal{B}\subseteq\B{K}$ be concrete $C^*$-algebras and let $\mathsf{A}=\mathnormal{A}''$ and $\mathsf{B}=\mathnormal{B}''$. Let $T\in\mult{\mathnormal{A}}{\mathnormal{B}}\subseteq\mathsf{A}\vnt\mathsf{B}$ and let $(\omega_i)_{i\in I}\subseteq\B{H}_*$ be a net of normal functionals such that $\omega_i\xrightarrow{i\in I}\omega\in\B{H}^*$ in the weak$^*$-topology. Then $x_i=(\omega_i\otimes\id)T\xrightarrow{i\in I}(\omega\otimes\id)T=x$ in $\sigma$-WOT of $\mathsf{B}$.
\end{lemma}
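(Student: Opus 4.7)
The plan is to test $\sigma$-WOT convergence in $\mathsf{B}$ against an arbitrary normal functional $\rho\in\mathsf{B}_*$, reducing the claim to a scalar statement that can be handled directly by the weak$^*$ convergence of the $\omega_i$. Fix such a $\rho$ and set $y:=(\id\otimes\rho)(T)\in\mathsf{A}\subseteq\B{H}$; this slice is well-defined because $T\in\mathsf{A}\vnt\mathsf{B}$ and $\rho$ is normal, so the normal slice map carries $\mathsf{A}\vnt\mathsf{B}$ into $\mathsf{A}$. Since each $\omega_i$ is also normal, $\omega_i\otimes\rho$ extends to a normal (hence $\sigma$-weakly continuous) functional on $\B{H}\vnt\B{K}$, and Fubini produces
\[
  \rho(x_i)=\rho\bigl((\omega_i\otimes\id)T\bigr)=(\omega_i\otimes\rho)(T)=\omega_i(y).
\]
Because $y\in\B{H}$ and $\omega_i\to\omega$ in the weak$^*$ topology of $\B{H}^*$, we conclude $\rho(x_i)\to\omega(y)$.

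It remains to identify $\omega(y)$ with $\rho(x)$. Since $T\in\mult{A}{B}$ and $\omega|_A$ is a bounded functional on $A$, the slice $x=(\omega\otimes\id)(T)$ is well-defined in $\mult{B}\subseteq\mathsf{B}$ via the standard extension of slice maps to bounded functionals on the first leg, and what is needed is the identity
\[
  \rho\bigl((\omega\otimes\id)T\bigr)=\omega\bigl((\id\otimes\rho)T\bigr).
\]
The cleanest way to verify it is to multiply $T$ on the right by $\mathds{1}\otimes b$ for $b\in B$, which lands inside the $C^*$-algebraic tensor product $A\otimes B$ (on which $\omega\otimes\rho$ is unambiguously defined and Fubini is elementary), and then to remove $b$ by an approximate unit argument together with the strict continuity of the slice $\omega\otimes\id\colon\mult{A}{B}\to\mult{B}$.

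The main obstacle is exactly this last identification: since $\omega$ is not assumed to be normal, the product $\omega\otimes\rho$ need not extend to a normal functional on $\B{H\otimes K}$, so a direct ``Fubini at the von Neumann level'' is unavailable and one must route the argument through the $C^*$-multiplier picture. Once this compatibility between the two uses of ``slice'' is in place, the rest of the proof is the one-line weak$^*$ continuity argument above, carried out for each $\rho\in\mathsf{B}_*$ separately.
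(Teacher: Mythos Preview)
Your proof is correct and follows the same approach as the paper: test against an arbitrary $\rho\in\mathsf{B}_*$, rewrite $\rho(x_i)=\omega_i\bigl((\id\otimes\rho)T\bigr)$, and invoke weak$^*$ convergence of the $\omega_i$. The paper's own proof simply asserts the Fubini identity $\rho\bigl((\omega\otimes\id)T\bigr)=\omega\bigl((\id\otimes\rho)T\bigr)$ for the non-normal limit $\omega$ without comment, whereas you rightly single this out as the one point requiring care.

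One small correction to your sketch of that justification: for $T\in\mult{A}{B}$ and $b\in B$, the product $T(\mathds{1}\otimes b)$ lies only in $\mult{A}{B}$, not in $A\otimes B$ in general, so multiplying by $\mathds{1}\otimes b$ alone does not land you in the minimal tensor product. The cleaner route is the one you allude to at the end: the slice map $\omega\otimes\id\colon\mult{A}{B}\to\mult{B}$ is strictly continuous on bounded sets, as is $\id\otimes\rho$, and any bounded functional on $B$ extends strictly continuously to $\mult{B}$; hence approximate $T$ strictly by a bounded net in the algebraic tensor product $A\odot B$, where the Fubini identity is trivial, and pass to the limit on both sides.
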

\begin{proof}
 Recall that $\sigma$-WOT is the weak$^*$-topology in $\mathsf{B}$. Pick $\mu\in\mathsf{B}_*$.  We have
 \begin{equation}\label{eq:tensconv1}
 \mu(x_i)=(\omega_i\otimes\mu)(T)=\omega_i\bigl((\id\otimes\mu)T\bigr). 
 \end{equation}
Since $t=(\id\otimes\mu)T\in\mathsf{A}$ and $\omega_i\xrightarrow{i\in I}\omega\in\mathsf{A}^*$ weak$^*$, we have $\omega_i(t)\xrightarrow{i\in I}\omega(t)$. This is equivalent to
\[\mu(x_i)\xrightarrow{i\in I}\omega(t)=(\omega\otimes\mu)(T)=\mu\bigl((\omega\otimes\id)T\bigr)\]
which finishes the proof.
\end{proof}
\subsection{Locally compact quantum groups}
We work in the setting of locally compact quantum groups as defined by Kustermans and Vaes (\cite{KV00,Va01}), but sometimes we use the results proven in the setting of multiplicative unitaries (\cite{BS93,SoWo,SLW96}), as the latter axiomatization covers the former. Hence we use the following objects to study a quantum group $\GG$:
\begin{enumerate}
 \item The von Neumann algebra $\Linf{G}$, endowed with a coproduct $\Delta_{\GG}$ and n.s.f. weights $\varphi^{\GG}, \psi^{\GG}$ satisfying the left- and right-invariance conditions (called the left and right Haar weights, respectively);
 \item $\Lone{G}$, the predual of $\Linf{G}$;
 \item The reduced $C^{\ast}$-algebra $\Cnot{G}$, endowed with the same structure as above;
 \item The universal $C^{\ast}$-algebra $\Cunot{G}$, its coproduct will be denoted by $\Delta^u_{\GG}$ and the canonical quotient map will be denoted by $\Lambda_{\GG}\colon\Cunot{G}\to\Cnot{G}$;
 \item The Kac-Takesaki operator $\ww^{\GG}\in\mult{\Cnothat{G}}{\Cnot{G}}$ and its universal companions: $\Ww^{\GG}\in\mult{\Cunothat{G}}{\Cnot{G}}$, $\wW^{\GG}\in\mult{\Cnothat{G}}{\Cunot{G}}$ and $\WW^{\GG}\in\mult{\Cunothat{G}}{\Cunot{G}}$;
\end{enumerate}
The Kac-Takesaki operator $\ww^{\GG}$ implements the coproduct in $\Cnot{G}$ (and also in $\Linf{G}$) in the following way:
\begin{equation}\label{eq:coproduct}
\Delta_{\GG}(x)=\ww^{\GG}(x\otimes\mathds{1})(\ww^{\GG})^{\ast} 
\end{equation}
The semi-universal incarnations of the Kac-Takesaki operator are linked to the Kac-Takesaki operator by means of the reducing morphism:
\[(\id\otimes\Lambda_{\GG})\wW^{\GG}=\ww^{\GG}=(\Lambda_{\widehat{\GG}}\otimes\id)\Ww^{\GG}.\] 
The universal version of the Kac-Takesaki operator is linked to the semi-universal companions in a similar manner: 
\[(\id\otimes\Lambda_{\GG})\WW^{\GG}=\Ww^{\GG}\quad\text{and}\quad (\Lambda_{\widehat{\GG}}\otimes\id)\WW^{\GG}=\wW^{\GG}.\] 
These operators obey the following pentagonal-like equation (see \cite[Proposition 4.4]{MRW12}):
\begin{equation}\label{eq:universalpentagonal}
 \WW_{13}^{\GG}=(\Ww_{12}^{\GG})^*\wW_{23}^{\GG}\Ww_{12}^{\GG}(\wW_{23}^{\GG})^*\in\mult{\Cnothat{G}\otimes\mathnormal{K}(L^2(\GG))}{\Cnot{G}}
\end{equation}
The dual quantum group $\widehat{\GG}$ is governed by its own Kac-Takesaki operator $\ww^{\widehat{\GG}}=\flip(\ww^{\GG})^*$. Then the coproduct in $\Linfhat{G}$ takes the form:
\begin{equation}\label{eq:dualcoproduct}
\Delta_{\widehat{\GG}}(x)=\flip(\ww^{\GG})^{\ast}(x\otimes\mathds{1})\flip(\ww^{\GG})=\flip\left((\ww^{\GG})^{\ast}(\mathds{1}\otimes x)\ww^{\GG}\right). 
\end{equation}
When discussing a single locally compact quantum group, we will often drop the ${}^{\GG}$ and ${}_{\GG}$ decorations of the coproduct, Kac-Takesaki operators etc. Then the structure of the dual group will be decorated only with the hat decoration, e.g. $\widehat{\Delta}$ will be the coproduct in $\Linfhat{G}$ etc. The study of $\GG$ and $\widehat{\GG}$ is supplemented with
\begin{enumerate}\setcounter{enumi}{5}
 \item the unitary antipode $R\colon\Cnot{G}\to\Cnot{G}$, living also on the von Neumann algebra level: $R\colon\Linf{G}\to\Linf{G}$;
 \item its universal lift: $R^u\colon\Cunot{G}\to\Cunot{G}$;
 \item the scaling group: for every $t\in\mathbb{R}$ there is $\tau^{\GG}_t\colon\Cnot{G}\to\Cnot{G}$, extending to the von Neumann level: $\tau^{\GG}_t\colon\Linf{G}\to\Linf{G}$;
 \item the universal counterpart: the group of transformations $\tau^u_t\colon\Cunot{G}\to\Cunot{G}$.
 \item analogous structure for the dual: $\widehat{R}^u$, $\widehat{R}$, $\widehat{\tau}^u_t$ and $\widehat{\tau}_t$.
 \item the analytic continuation of $\tau_t$ to the upper half-plane $\mathbb{C}^+$ yields the analytic generator of the group of transformations $\tau_t$, denoted $\tau_{i/2}$. It appears in the polar decomposition of the antipode: $S=R\comp\tau_{i/2}$. In case $\tau_t=\id$, one has $\tau_{i/2}=\id$ and $S=R$ is bounded. In such cases we say that $\GG$ is of Kac type.
\end{enumerate}
The scaling groups and unitary antipodes are compatible with the reducing morphisms in the following sense:
\begin{equation}\label{eq:compatibility1}
\tau_t\comp\Lambda=\Lambda\comp\tau_t^{u}\qquad\mathrm{and}\qquad  R\comp\Lambda=\Lambda\comp R^{u}
\end{equation}
and similarly for $\widehat{\GG}$. Moreover, the scaling groups and unitary antipodes are compatible with the universal version of the Kac-Takesaki operator in the following sense (cf. \cite[Proposition 39, Lemma 40 and Proposition 42]{SoWo}):
\begin{equation}\label{eq:compatibility2}
(\widehat{\tau}_t^{u}\otimes\tau_t^{u})\WW=\WW\qquad\mathrm{and}\qquad  (\widehat{R}^{u}\otimes R^{u})\WW=\WW
\end{equation}

\subsection{Representation theory}\label{sec:LCQGrep}
By a \emph{representation} of a locally compact quantum group $\GG$ we will always mean a unitary element $U\in\mult{\K{H}{U}}{\Cnot{G}}$ satisfying:
\begin{equation}\label{eq:representation}
 (\id\otimes\Delta_{\GG})U=U_{12}U_{13}
\end{equation}
Recall that $\mult{\K{H}}{\Cnot{G}}\subseteq\B{H}{U}\vnt\Linf{G}$, we often just write $U\in\B{H}{U}\vnt\Linf{G}$. It is known that any $U\in\B{H}{U}\vnt\Linf{G}$ satisfying \eqref{eq:representation} actually satisfies $U\in\mult{\K{H}{U}}{\Cnot{G}}$, see e.g.~\cite[Theorem 1.6(2)]{SLW96}.

Out of two representations $U\in\mult{\K{H}{U}}{\Cnot{G}}$ and $\tilde{U}\in\mult{\K{H}{\tilde{U}}}{\Cnot{G}}$ one can form two new representations: the direct sum and tensor product, we will need a precise formula of the former later on, so we recall it. The direct sum is obtained by using the canonical inclusion maps $\iota\colon\B{H}{U}\hookrightarrow\mathsf{B}(\mathcal{H}_U\oplus\mathcal{H}_{\tilde{U}}))$ and $\tilde{\iota}\colon\B{H}{\tilde{U}}\hookrightarrow\mathsf{B}(\mathcal{H}_U\oplus\mathcal{H}_{\tilde{U}}))$ induced by the spatial maps $\mathcal{H}_U,\mathcal{H}_{\tilde{U}}\hookrightarrow\mathcal{H}_U\oplus\mathcal{H}_{\tilde{U}}$. Then the direct sum is nothing but \begin{equation}\label{eq:corepsum}U\oplus V\colon=(\iota\otimes\id)(U)+(\tilde{\iota}\otimes\id)\tilde{U}\in\mult{\mathnormal{K}(\mathcal{H}_U\oplus\mathcal{H}_{\tilde{U}})}{\Cnot{G}}.\end{equation}

The viewpoint $U\in\B{H}{U}\vnt\Linf{G}$ enables us to make sense of the following crucial observation, which is contained in \cite[Theorem 1.6]{SLW96}. Let $\omega\in\B{H}{U}_*$ be a normal functional. Then \begin{equation}\label{eq:antipodecorep}(\omega\otimes\id)U\in\mathcal{D}(S^{\GG})\quad\mathrm{and}\quad S^{\GG}\big((\omega\otimes\id)U\big)=(\omega\otimes\id)(U^*).\end{equation}

The representations of $\GG$ can always be realized by means of a ${}^*$-ho\-mo\-mor\-phism from $\Cunothat{G}$. We note the following result:
\begin{theorem}[{\cite[Proposition 5.2]{kustermans}}]\label{thm:kustermans}
 Let $U\in \mult{\K{H}{U}}{\Cnot{G}}$ be a representation. Then there exists a unique $\phi_U\in \Mor(\Cunothat{G},\K{H}{U})$ such that 
 \[(\phi_U\otimes\id)\Ww^{\GG}=U.\]
Conversely, given a $C^*$-algebra $\mathnormal{B}$, a representation in a Hilbert space $\rho\in \Mor(\mathnormal{B},\K{H})$ and morphism $\phi\in \Mor(\Cunothat{G}),\mathnormal{B})$, the unitary $U_{\phi,\rho}=(\rho\comp\phi\otimes\id)\Ww^{\GG}\in \mult{\K{H}}{\Cnot{G}}$ is a representation of $\GG$.
\end{theorem}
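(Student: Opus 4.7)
The plan is to exploit the universality of the dual $C^*$-algebra $\Cunothat{G}$, which is constructed precisely so that nondegenerate $*$-representations of $\Cunothat{G}$ correspond bijectively to nondegenerate bounded $*$-representations of a certain dense convolution $*$-subalgebra $\mathcal{A}_{\GG}\subseteq\Lone{G}$ (the functionals for which the antipode acts as an involution, cf.~\eqref{eq:antipodecorep}). In this construction the slices $\{(\id\otimes\omega)\Ww^{\GG}\colon \omega\in\Lone{G}\}$ coincide with the images of $\mathcal{A}_{\GG}$ under the canonical embedding into $\Cunothat{G}$, and they form a norm-dense subalgebra.

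For the direct statement, given $U\in\mult{\K{H}{U}}{\Cnot{G}}$ I would define $\pi_U\colon\mathcal{A}_{\GG}\to\B{H}{U}$ by $\pi_U(\omega)=(\id\otimes\omega)U$. Convolution-multiplicativity is a direct consequence of $(\id\otimes\Delta_{\GG})U=U_{12}U_{13}$ unfolded against $\omega\otimes\omega'$, while $*$-compatibility is precisely \eqref{eq:antipodecorep}. Unitarity of $U$ gives $\|\pi_U(\omega)\|\leq\|\omega\|$, so $\pi_U$ extends by the universal property to a morphism $\phi_U\in\Mor(\Cunothat{G},\K{H}{U})$, nondegenerate because $U$ is a nondegenerate multiplier. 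To verify $(\phi_U\otimes\id)\Ww^{\GG}=U$ I would slice the right leg by $\omega\in\Lone{G}$: the left-hand side becomes $\phi_U\bigl((\id\otimes\omega)\Ww^{\GG}\bigr)=\pi_U(\omega)=(\id\otimes\omega)U$, and since $\Lone{G}$ separates the points of $\Linf{G}$ the identity follows. Uniqueness is immediate from norm-density of the slices in $\Cunothat{G}$.

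For the converse, unitarity of $U_{\phi,\rho}=(\rho\comp\phi\otimes\id)\Ww^{\GG}$ is immediate from unitarity of $\Ww^{\GG}$ together with $\rho\comp\phi$ being a morphism of $C^*$-algebras. Applying $\rho\comp\phi\otimes\id\otimes\id$ to the identity $(\id\otimes\Delta_{\GG})\Ww^{\GG}=\Ww_{12}^{\GG}\Ww_{13}^{\GG}$ — itself a restatement of the fact that $\Ww^{\GG}$ corepresents $\Delta_{\GG}$ on $\Cnot{G}$, which in turn follows from the pentagonal equation for $\ww^{\GG}$ via $(\id\otimes\Lambda_{\GG})\Ww^{\GG}=\ww^{\GG}$ — yields $(\id\otimes\Delta_{\GG})U_{\phi,\rho}=(U_{\phi,\rho})_{12}(U_{\phi,\rho})_{13}$, as required.

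The main obstacle is really the substance of the cited result: the construction of $\Cunothat{G}$ with its universal property — producing the enveloping $C^*$-norm on $\mathcal{A}_{\GG}$, identifying $\mathcal{A}_{\GG}$ with the span of slices of $\Ww^{\GG}$, and verifying the factorization of $\Ww^{\GG}$ through this $C^*$-algebra — is Kustermans' original contribution. Once it is in place, the steps above amount to formal slicing manipulations.
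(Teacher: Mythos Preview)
The paper does not supply its own proof of this statement; it is quoted as \cite[Proposition 5.2]{kustermans} and used as a black box throughout. Your sketch is a faithful outline of Kustermans' original argument: form the $*$-representation $\omega\mapsto(\id\otimes\omega)U$ of the convolution algebra $L^1_\sharp(\GG)$, invoke the universal property defining $\Cunothat{G}$ to obtain $\phi_U$, and verify the identity and uniqueness by slicing. You also correctly flag that the real content---constructing the enveloping $C^*$-norm and showing $\Ww^{\GG}$ sits in $\mult{\Cunothat{G}}{\Cnot{G}}$---is Kustermans' contribution, not something to be reproved here.

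One notational slip in the converse: you write $(\id\otimes\Lambda_{\GG})\Ww^{\GG}=\ww^{\GG}$, but the second leg of $\Ww^{\GG}$ is already reduced; the relation in the paper's conventions is $(\Lambda_{\widehat{\GG}}\otimes\id)\Ww^{\GG}=\ww^{\GG}$. This does not affect your argument, since the identity $(\id\otimes\Delta_{\GG})\Ww^{\GG}=\Ww^{\GG}_{12}\Ww^{\GG}_{13}$ you actually use is part of the construction of $\Ww^{\GG}$ and does not require reducing anything.
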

\subsection{Homomorphisms and subgroups}\label{sec:homosubgr}
An in-depth description of ho\-mo\-mor\-phisms between quantum groups was given in \cite{MRW12}, let us recall the main points. Fix two locally compact quantum groups $\mathbb{G}$ and $\mathbb{H}$. A ho\-mo\-mor\-phism of quantum groups $\mathbb{H}\to\mathbb{G}$ can be equivalently described by three objects:
\begin{description}[font=$\bullet\ $\scshape\bfseries]
 \item[Hopf ${}^*$-ho\-mo\-mor\-phisms] $\varphi\in \Mor(\Cunot{G},\Cunot{H})$, which intertwine the coproducts: 
 \[(\varphi\otimes\varphi)\comp\Delta_{\GG}^u=\Delta_{\HH}^u\circ\varphi\]
 \item[Bicharacters] unitary elements $V\in\mult{\Cnothat{G}}{\Cnot{H}}$, which are (anti)rep\-re\-sen\-ta\-tions on both legs:
 \begin{equation}\label{eq:bicharacter}
 (\Delta_{\widehat{\GG}}\otimes\id)V=V_{23}V_{13}\quad\mathrm{and}\quad(\id\otimes\Delta_{\HH})V=V_{12}V_{13} 
 \end{equation} Moreover, they satisfy $(\widehat{R}^{\GG}\otimes R^{\HH})V=V$ and $(\tau^{\widehat{\GG}}_t\otimes \tau^{\HH}_t)V=V$. 
 \item[Right quantum group ho\-mo\-mor\-phisms] morphisms $\rho\in \Mor(\Cnot{G},\Cnot{G}\otimes\Cnot{H})$ satisfying
 \begin{equation}\label{eq:rightquantumgroupmorphism}
(\Delta_{\GG}\otimes\id)\circ\rho=(\id\otimes\rho)\circ\Delta_{\GG}\quad\mathrm{and}\quad(\id\otimes\Delta_{\HH})\circ\rho=(\rho\otimes\id)\circ\rho   
 \end{equation} Moreover, they satisfy $(\id\otimes\rho)\ww^{\GG}=\ww^{\GG}_{12}V_{13}$, where $V$ is the corresponding bicharacter.
\end{description}
Furthermore, each ho\-mo\-mor\-phism $\HH\to\GG$ has its dual ho\-mo\-mor\-phism $\widehat{\GG}\to\widehat{\HH}$. It can be described as follows. If $\varphi\in\Mor(\Cunot{G},\Cunot{H})$ is a Hopf ${}^*$-ho\-mo\-mor\-phism, then there exists a unique $\widehat{\varphi}\in\Mor(\Cunothat{H},\Cunothat{G})$, these maps are linked via \begin{equation}\label{eq:dualhomo}(\id\otimes\varphi)\WW^{\HH}=(\widehat{\varphi}\otimes\id)\WW^{\GG}.\end{equation} Equivalently, if $V\in\mult{\Cnothat{G}}{\Cnot{H}}$ is a bicharacter representing a ho\-mo\-mor\-phism $\mathbb{H}\to\mathbb{G}$, then $\widehat{V}=\flip(V^*)\in\mult{\Cnot{H}}{\Cnothat{G}}$ is a bicharacter representing its dual ho\-mo\-mor\-phism $\widehat{\GG}\to\widehat{\HH}$. 

Let us also stress that bicharacters and right quantum group homomorphisms are equally well studied in the von Neumann algebraic context, so that a unitary $V\in\Linfhat{G}\vnt\Linf{H}$ satisfying \eqref{eq:bicharacter} and a normal ${}^*$-ho\-mo\-mor\-phism $\rho\colon\Linf{G}\to\Linf{G}\vnt\Linf{H}$ satisfying \eqref{eq:rightquantumgroupmorphism} also describe a ho\-mo\-mor\-phism of quantum groups $\HH\to\GG$. Right quantum group homomorphisms in the von Neumann algebraic context are in fact normal extensions of the respective maps in the $C^*$-algebraic context: they are implemented by $V$ by the formula $\rho(x)=V(x\otimes\mathds{1})V^*$. Let us note that second condition in \eqref{eq:rightquantumgroupmorphism} corresponds to $\rho$ being a right action of $\HH$ on $\Linf{G}$. These are analogues of the natural actions by right shifts.

A thorough treatment of the notion of subgroup was given in \cite{DKSS}, we recall some of the main points of that article. Let $\HH\to\GG$ be a ho\-mo\-mor\-phism of quantum groups (described by a Hopf ${}^*$-ho\-mo\-mor\-phism $\varphi\in \Mor(\Cunot{H},\Cunot{G})$, a bicharacter $V\in\mult{\Cnothat{G}}{\Cnot{H}}$ and a right quantum group ho\-mo\-mor\-phism $\rho\in \Mor(\Cnot{G},\Cnot{G}\otimes\Cnot{H})$). We say that $\HH$ is a closed quantum subgroup\footnote{or sometimes we call them Vaes-closed quantum subgroups, as there is a competing definition of Woronowicz-closed quantum subgroup and they agree in case $\GG$ is compact, discrete, classical or dual to classical, see \cite{DKSS,KKSint}} of $\GG$ if there exists a normal injective ${}^*$-ho\-mo\-mor\-phism $\gamma\colon\Linfhat{H}\to\Linfhat{G}$ such that $V=(\gamma\otimes\id)\ww^{\HH}$. This map $\gamma$ is nothing but the incarnation on the von Neumann algebra level of the reduced version of $\hat{\pi}$, namely
\begin{equation}\label{eq:reduceddualhomo1}
\Lambda_{\GG}\comp\widehat{\varphi}=\gamma\comp\Lambda_{\HH} 
\end{equation}
and hence, by \eqref{eq:dualhomo}, in particular we have
\begin{equation}\label{eq:reduceddualhomo2}
(\id\otimes\varphi)\wW^{\GG}=(\gamma\otimes\id)\wW^{\HH} 
\end{equation}
Let $\mathsf{M}\subseteq\Linf{G}$ be a von Neumann subalgebra. Recall from \cite{TT72} that $\mathsf{M}$ is called \emph{invariant} if $\Delta(\mathsf{M})\subseteq\mathsf{M}\vnt\mathsf{M}$. We moreover say that $\mathsf{M}$ is a \emph{Baaj-Vaes subalgebra} if it is invariant, $R(\mathsf{M})=\mathsf{M}$ and $\tau_t(\mathsf{M})=\mathsf{M}$ for all $t\in\mathbb{R}$. The Baaj-Vaes theorem \cite[Proposition A.5]{BV05} states that Baaj-Vaes subalgebras of $\Linf{G}$ are in one to one correspondence with closed quantum subgroups of $\widehat{\GG}$. This means, in particular, that $\mathsf{M}$ can be endowed with Haar weights, and that the inclusion $\iota\colon\mathsf{M}\hookrightarrow\Linf{G}$ induces the full data of a quantum group ho\-mo\-mor\-phism: the morphism $\pi\in\Mor(\Cunothat{G},\Cunothat{H})$, the bicharacter and the right quantum group ho\-mo\-mor\-phism. Moreover, $\iota$ is the von Neumann incarnation of the reduced version of $\hat{\pi}$.

Let us comment on the assumptions of Baaj-Vaes theorem. It turns out that in the case $\GG$ is compact, discrete, classical and dual to classical the conditions $R(\mathsf{M})=\mathsf{M}$ and $\tau_t(\mathsf{M})=\mathsf{M}$ for all $t\in\mathbb{R}$ actually follow from some more general principles. In case $\GG$ is compact, this follows from restriction of Haar state to $\mathsf{M}$, in case $\GG$ is discrete this is \cite[Theorem 3.1]{NY14}, after applying co-duality techniques of \cite{KS14a} (an elementary proof is also available, see \cite[Theorem 1.35]{PJphd}). The case of classical groups and dual to classical groups was covered in \cite{TT72} by Takesaki and Tatsuuma. It turns out that the von Neumann algebras constructed in \autoref{sec:hopfimageconstruction} are automatically invariant, and once made $\tau_t$-invariant for all $t\in\mathbb{R}$, they are also $R$-invariant. This covers an abundance of invariant subalgebras, especially in the Kac case, and it is not known to the authors whether there exists an invariant von Neumann subalgebra that is not a Baaj-Vaes subalgebra.
\section{Construction of Hopf image}\label{sec:hopfimageconstruction}
\subsection{First steps towards the construction}\label{sec:construction}
The goal of this part is to construct a quantum group $\HH$ which will later be shown to satisfy the defining properties of Hopf image. So let us fix a morphism $\beta\in \Mor(\Cunot{G},\mathnormal{B})$, where $\mathnormal{B}$ is some $C^{\ast}$-algebra, as in the introduction.

Application of $\Lambda_{\widehat{\mathbb{G}}}\otimes\id\otimes\id$ to both sides of \eqref{eq:universalpentagonal} yields:
\begin{equation}\label{eq:semireducedcharacter4}
\wW_{23}\ww_{12}\wW_{23}^{\ast}=\ww_{12}\wW_{13}. 
\end{equation}
Let us denote $X=(\id\otimes\beta)\wW\in\mult{\Cnothat{G}}{\mathnormal{B}}$. Computing the value of $(\id\otimes\id\otimes\beta)$ at both sides of the equality \eqref{eq:semireducedcharacter4} results in:
\begin{equation}\label{eq:dualcorepresentation1}
X_{23}\ww_{12}X_{23}^{\ast}=\ww_{12}X_{13} 
\end{equation}
or, equivalently, 
\begin{equation}\label{eq:dualcorepresentation2}
(\Delta_{\widehat{\mathbb{G}}}\otimes\id)X=X_{23}X_{13}
\end{equation}
Applying $(\omega\otimes\id\otimes\id)$ for $\omega\in\Lonehat{G}$ to both sides of \eqref{eq:dualcorepresentation1} we obtain:
\begin{equation}\label{eq:partialcoaction0}
X(a\otimes\mathds{1})X^{\ast}=(\omega\otimes\id\otimes\id)(\ww_{12}X_{13}), 
\end{equation}
where $a=(\omega\otimes\id)\ww$. As
\begin{equation}\label{eq:comparisonofmultiplieralgebras}\begin{split}
\ww_{12}\in \mult{\Cnothat{G}}{\Cnot{G}}{\mathnormal{B}}\subseteq \mult{\mathnormal{K}(L^2(\GG))}{\Cnot{G}}{\mathnormal{B}}\\
X_{13}\in \mult{\Cnothat{G}}{\Cnot{G}}{\mathnormal{B}}\subseteq \mult{\mathnormal{K}(L^2(\GG))}{\Cnot{G}}{\mathnormal{B}}
\end{split}
\end{equation}
we get that $X(a\otimes\mathds{1})X^{\ast}\in \mult{\Cnot{G}}{\mathnormal{B}}$ and hence the map $\theta\colon\Cnot{G}\to\mult{\Cnot{G}}{\mathnormal{B}}$ can be defined by
\begin{equation}\label{eq:partialcoaction1}
\Cnot{G}\ni a\xmapsto{\ \theta\ } X(a\otimes\mathds{1})X^{\ast}\in \mult{\Cnot{G}}{\mathnormal{B}}. 
\end{equation}
Let us assume that $\mathnormal{B}$ is (faithfully, nondegenerately) represented on a Hilbert space $\mathcal{H}$: $\mathnormal{B}\subseteq\B{H}$. Then we can view $\theta$ as a representation $\theta\in \Rep(\Cnot{G},L^2(\GG)\otimes\mathcal{H})$. One has:
\begin{equation}\label{eq:partialcoaction2}
(\id\otimes\,\theta)(\ww)=\ww_{12}X_{13}\in\mult{\Cnothat{G}}{\Cnot{G}}{\mathnormal{B}}\subseteq \mult{\Cnothat{G}}{\mathnormal{K}(L^2(\mathbb{G})\otimes\mathcal{H})}.\end{equation}
As $\ww\in\mult{\Cnothat{G}}{\Cnot{G}}$ generates $\Cnot{G}$ in the sense of \cite{SLW95a}, we conclude that:
\begin{proposition}
$\theta\in \Mor(\Cnot{G},\Cnot{G}\otimes \mathnormal{B})$.  
\end{proposition}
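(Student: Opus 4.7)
The plan is to verify the two conditions that comprise being a morphism in the sense of \cite{SLW95a}: namely that $\theta$ is a $*$-homomorphism taking values in $\mult{\Cnot{G}\otimes\mathnormal{B}}$, and that $\theta$ is nondegenerate. The first of these is essentially free. Since $X\in\mult{\Cnothat{G}\otimes\mathnormal{B}}$ is unitary, the assignment $a\mapsto X(a\otimes\mathds{1})X^*$ is automatically a $*$-homomorphism, and the discussion around \eqref{eq:comparisonofmultiplieralgebras} already verifies that for $a\in\Cnot{G}$ the element $X(a\otimes\mathds{1})X^*$ lands in $\mult{\Cnot{G}\otimes\mathnormal{B}}$ (and not merely in some larger $\mathsf{B}$-algebra), so that $\theta$ is well-defined as a $*$-homomorphism $\Cnot{G}\to\mult{\Cnot{G}\otimes\mathnormal{B}}$.

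The substantive content is nondegeneracy. For this I would invoke the generating criterion from \cite{SLW95a} for the Kac--Takesaki operator: $\ww\in\mult{\Cnothat{G}\otimes\Cnot{G}}$ generates $\Cnot{G}$ in the sense that for any $C^*$-algebra $D$ and any $*$-homomorphism $\rho\colon\Cnot{G}\to\mult{D}$, the condition
\[(\id\otimes\rho)\ww\in\mult{\Cnothat{G}\otimes D}\]
is equivalent to $\rho$ being nondegenerate, i.e.\ $\rho\in\Mor(\Cnot{G},D)$. Applied to $D=\Cnot{G}\otimes\mathnormal{B}$ and $\rho=\theta$, the required membership is precisely the content of \eqref{eq:partialcoaction2}, which has just been established: $(\id\otimes\theta)\ww=\ww_{12}X_{13}$, and the right-hand side lies in $\mult{\Cnothat{G}\otimes\Cnot{G}\otimes\mathnormal{B}}$ because each of the two factors $\ww_{12}$ and $X_{13}$ belongs to that multiplier algebra by the inclusions in \eqref{eq:comparisonofmultiplieralgebras}.

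Combining these two observations yields $\theta\in\Mor(\Cnot{G},\Cnot{G}\otimes\mathnormal{B})$ directly. The only place that requires a moment of care is the leg-numbering bookkeeping of the ambient multiplier algebras, to make sure that the product $\ww_{12}X_{13}$ is genuinely an element of $\mult{\Cnothat{G}\otimes(\Cnot{G}\otimes\mathnormal{B})}$ rather than merely an element of the strictly larger algebra $\mult{\mathnormal{K}(L^2(\GG))\otimes\Cnot{G}\otimes\mathnormal{B}}$; but this is exactly what the inclusions \eqref{eq:comparisonofmultiplieralgebras} were recorded for, so no further work is needed.
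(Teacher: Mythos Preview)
Your argument is correct and is exactly the paper's approach: the text immediately preceding the proposition records \eqref{eq:partialcoaction2} and then invokes that $\ww$ generates $\Cnot{G}$ in the sense of \cite{SLW95a}, which is precisely your nondegeneracy step. The only cosmetic difference is that the paper explicitly fixes a faithful nondegenerate representation $\mathnormal{B}\subseteq\B{H}$ so as to view $\theta$ as a Hilbert-space representation before applying Woronowicz's generation criterion, whereas you phrase the criterion directly with target $D=\Cnot{G}\otimes\mathnormal{B}$.
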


Now we are in position to state the main construction. Let 
\[\mathcal{M}=\left\{(\id\otimes\omega)X \mid \omega\in \mathnormal{B}^{\ast}\right\}\subseteq\Linfhat{G}\]
As $X\in\mult{\Cnothat{G}}{\mathnormal{B}}$ one sees that $\mathcal{M}\subseteq \mult{\Cnothat{G}}\subseteq\Linfhat{G}$. Denote by $\mathcal{M}_1$ the ${}^{\ast}$-algebra generated by $\mathcal{M}$ and by $\mathnormal{M}_1$ its norm-closure. 

\begin{proposition}\label{prop:mzeroprimvna}
$\mathcal{M}_1'=\mathnormal{M}_1'$ is a von Neumann algebra
\end{proposition}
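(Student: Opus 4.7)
The proposition splits into two essentially independent assertions: (a) the equality $\mathcal{M}_1' = \mathnormal{M}_1'$ of commutants, and (b) the statement that the resulting object is a von Neumann algebra. Both are soft general-nonsense facts about commutants of $\ast$-algebras in $\B{L^2(\GG)}$, and the plan is simply to verify each carefully. To set the stage, note that $\mathcal{M}\subseteq\mult{\Cnothat{G}}\subseteq\Linfhat{G}\subseteq\B{L^2(\GG)}$, so $\mathcal{M}_1$ and $\mathnormal{M}_1$ are subsets of $\B{L^2(\GG)}$ and their commutants are taken in $\B{L^2(\GG)}$.

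For (a), the inclusion $\mathnormal{M}_1'\subseteq\mathcal{M}_1'$ is immediate from $\mathcal{M}_1\subseteq\mathnormal{M}_1$. For the reverse inclusion I would use the joint norm-continuity of multiplication: given $T\in\mathcal{M}_1'$ and $y\in\mathnormal{M}_1$, pick a sequence $y_n\in\mathcal{M}_1$ with $\|y_n-y\|\to 0$. The identity $Ty_n-y_nT=0$ holds for each $n$, and since left- and right-multiplication by $T$ are norm-continuous on $\B{L^2(\GG)}$, passing to the limit yields $Ty=yT$. Hence $T\in\mathnormal{M}_1'$.

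For (b), observe that $\mathcal{M}_1$ is $\ast$-closed by construction (it is the $\ast$-algebra generated by $\mathcal{M}$), hence so is its norm closure $\mathnormal{M}_1$; in fact $\mathnormal{M}_1$ is a $C^*$-algebra. The commutant of any self-adjoint subset of $\B{L^2(\GG)}$ is automatically a $\ast$-subalgebra of $\B{L^2(\GG)}$ that is closed in the weak operator topology, i.e.\ a von Neumann algebra. Applying this to the self-adjoint set $\mathnormal{M}_1$ (or equivalently to $\mathcal{M}_1$, thanks to (a)) concludes the proof.

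There is no real obstacle here; the proposition is a bookkeeping statement that sets up the construction of the Hopf image von Neumann algebra in the sequel, by passing from the $C^*$-algebraic object $\mathnormal{M}_1$ to the von Neumann algebra $\mathnormal{M}_1''=\mathcal{M}_1''$. The only thing worth being slightly careful about is to record explicitly where the commutants live, so that the bicommutant theorem is invoked in the correct ambient algebra $\B{L^2(\GG)}$.
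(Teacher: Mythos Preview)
Your proof is correct. Both parts (a) and (b) are handled by standard general facts about commutants, and your argument is entirely sound.

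The paper's own proof, however, takes a somewhat different path worth noting. Rather than invoking the definitional self-adjointness of $\mathcal{M}_1$, the paper uses the unitarity of $X$ to characterize the commutant concretely:
\[
T\in\mathcal{M}_1'\iff X(T\otimes\mathds{1})=(T\otimes\mathds{1})X,
\]
and then observes that this condition is manifestly $\ast$-symmetric (since $X(T\otimes\mathds{1})=(T\otimes\mathds{1})X$ is equivalent to $(T\otimes\mathds{1})X^\ast=X^\ast(T\otimes\mathds{1})$, whose adjoint gives the same relation with $T^\ast$ in place of $T$). The equality $\mathcal{M}_1'=\mathnormal{M}_1'$ is left implicit there. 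Your approach is cleaner as a proof of the bare proposition; the paper's approach has the side benefit of recording the useful description of $\mathcal{M}_1'$ as the set of operators $T$ such that $T\otimes\mathds{1}$ commutes with $X$, which is the form in which the commutant actually gets used in subsequent arguments (e.g.\ in the proof of \autoref{thm:characterisation}).
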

\begin{proof}
Indeed, as we have  
\[T\in \mathcal{M}_1'\iff X(T\otimes\mathds{1})=(T\otimes\mathds{1})X\iff (T\otimes\mathds{1})X^{\ast}=X^{\ast}(T\otimes\mathds{1})\iff\] \[\iff X(T^{\ast}\otimes\mathds{1})=(T^{\ast}\otimes\mathds{1})X\iff T^{\ast}\in \mathcal{M}_1'\]
\end{proof}
\noindent Thus also $\mathsf{M}_1=\mathcal{M}''=\mathcal{M}_1^{\wc}$ is a von Neumann algebra.

Let $\mathsf{M}_{BV}$ be the smallest Baaj-Vaes subalgebra containing $\mathnormal{M}_1$ (so in particular containing $\mathsf{M}_1$). The existence of such a von Neumann algebra follows from standard argument: it is the intersection of all Baaj-Vaes subalgebras of $\Linfhat{G}$ containing $\mathsf{M}_1$: this collection is non-empty because $\Linfhat{G}$ itself is such an algebra. Later on we will see that it can be constructed more explicitly. Thanks to Baaj-Vaes theorem, there exists $\mathbb{H}\subset\mathbb{G}$ such that $\Linfhat{H}=\mathsf{M}_{BV}$, in particular, we have a map $\pi\in\Mor(\Cunot{G},\Cunot{H})$ coming from \cite[Theorem 3.5]{DKSS}, which is linked to the embedding $\mathsf{M}_{BV}\subseteq\Linfhat{G}$ via \eqref{eq:reduceddualhomo2}. 

\subsection{Properties of the algebra \texorpdfstring{$\mathsf{M}_1$}{M1}}\label{sec:hopfimagetechnical}
\begin{lemma}\label{lem:descriptionofM}
Let $\beta\in\Mor(\Cunot{G},\mathnormal{B})$ and assume $\mathnormal{B}$ is faithfully, nondegenerately represented on a Hilbert space $\mathcal{H}$: $\mathnormal{B}\subseteq\B{H}$. Let $\mathnormal{C}=\beta(\Cunot{G})\subseteq\mult{\mathnormal{B}}$. Denote:
 \begin{itemize}
  \item $\mathsf{M}_1=\left\{(\id\otimes\omega)(X) \mid \omega\in \mathnormal{B}^{\ast}\right\}''$
  \item $\mathsf{M}_2=\left\{(\id\otimes\omega)(X) \mid \omega\in \mathnormal{C}^{\ast}\right\}''$
  \item $\mathsf{M}_3=\left\{(\id\otimes\omega)(X) \mid \omega\in\B{H}^*\right\}''$
  \item $\mathsf{M}_4=\left\{(\id\otimes\omega)(X) \mid \omega\in\B{H}_*\right\}''$
  \item $\mathsf{M}_5=\left\{(\id\otimes\omega_{\xi,\eta})(X) \mid \xi,\eta\in\mathcal{H}\right\}''$
 \end{itemize}
Then
\begin{itemize}
 \item $\beta\in\Mor(\Cunot{G},\mathnormal{C})$
 \item $\mathsf{M}_1=\mathsf{M}_2=\mathsf{M}_3=\mathsf{M}_4=\mathsf{M}_5$
\end{itemize}
\end{lemma}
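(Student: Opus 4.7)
The first assertion $\beta\in\Mor(\Cunot{G},\mathnormal{C})$ is a routine check: $\mathnormal{C}=\beta(\Cunot{G})$ is a $C^*$-subalgebra of $\mult{\mathnormal{B}}$, and nondegeneracy of $\beta$ as a morphism into $\mathnormal{B}$ means that $\beta$ carries any approximate unit of $\Cunot{G}$ to a strict approximate unit of $\mathnormal{B}$ which in particular is an approximate unit for $\mathnormal{C}$, so $\beta(\Cunot{G})\cdot\mathnormal{C}=\mathnormal{C}\cdot\mathnormal{C}$ is dense in $\mathnormal{C}$. This upgrade — giving $X=(\id\otimes\beta)\wW\in\mult{\Cnothat{G}}{\mathnormal{C}}$ on top of the native $X\in\mult{\Cnothat{G}}{\mathnormal{B}}$ — is needed below. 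My plan for the five-fold equality is to first establish the easy identities $\mathsf{M}_5=\mathsf{M}_4$ and $\mathsf{M}_3=\mathsf{M}_1=\mathsf{M}_2$ together with the trivial $\mathsf{M}_4\subseteq\mathsf{M}_3$, and then close the cycle with $\mathsf{M}_3\subseteq\mathsf{M}_4$.

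The equality $\mathsf{M}_5=\mathsf{M}_4$ follows from the standard $\ell^1$-decomposition $\omega=\sum_n\omega_{\xi_n,\eta_n}$, with $\sum_n\|\xi_n\|\|\eta_n\|<\infty$, of any normal $\omega\in\B{H}_*$, which gives $(\id\otimes\omega)X$ as a norm-convergent sum of generators of $\mathsf{M}_5$. The equalities $\mathsf{M}_3=\mathsf{M}_1$ and $\mathsf{M}_3=\mathsf{M}_2$ both rest on the multiplier structure of $X$: the slice $(\id\otimes\omega)X$ depends only on the restriction of $\omega$ to the second leg of $X$, which is contained in $\mathnormal{B}$ (giving $\mathsf{M}_3=\mathsf{M}_1$ via Hahn-Banach extension of elements of $\mathnormal{B}^*$ to $\B{H}^*$) and, by the first assertion, also in $\mathnormal{C}\subseteq\B{H}$ (giving $\mathsf{M}_3=\mathsf{M}_2$ by the analogous Hahn-Banach argument). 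The inclusion $\mathsf{M}_4\subseteq\mathsf{M}_3$ is immediate since $\B{H}_*\subseteq\B{H}^*$.

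The remaining inclusion $\mathsf{M}_3\subseteq\mathsf{M}_4$ is the main obstacle, since a priori it upgrades slicing by arbitrary bounded functionals on $\B{H}$ to slicing by normal functionals only. The plan is to invoke Goldstine's theorem in the form that $B_{\B{H}_*}$ is weak$^*$-dense in $B_{\B{H}^*}=B_{(\B{H}_*)^{**}}$: any $\omega\in\B{H}^*$ is the weak$^*$-limit of a norm-bounded net $\omega_i\in\B{H}_*$. The second-leg version of \autoref{lem:tensconv} — whose proof is fully symmetric in the two legs of $T$ — then yields $(\id\otimes\omega_i)X\to(\id\otimes\omega)X$ in the $\sigma$-WOT of $\mathsf{B}(L^2(\widehat{\GG}))$. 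Since each $(\id\otimes\omega_i)X$ is a generator of $\mathsf{M}_4$ and $\mathsf{M}_4$ is $\sigma$-WOT-closed, the limit lies in $\mathsf{M}_4$. The one point that requires care is the verification that the two a priori distinct meanings of $(\id\otimes\omega)X$ — the multiplier slice pulled back via $\omega|_\mathnormal{B}\in\mathnormal{B}^*$ (which is how it enters the definition of $\mathsf{M}_3$) and the $\sigma$-WOT limit produced by the Goldstine approximation — coincide; both are characterised by the duality formula $\mu\mapsto\omega\bigl((\mu\otimes\id)X\bigr)$ on $\mathsf{B}(L^2(\widehat{\GG}))_*$, so they agree.
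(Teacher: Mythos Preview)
Your argument is correct and follows essentially the same route as the paper: the same Hahn--Banach/restriction reasoning for $\mathsf{M}_1=\mathsf{M}_2=\mathsf{M}_3$, the same Goldstine/weak$^*$-density plus \autoref{lem:tensconv} for $\mathsf{M}_3\subseteq\mathsf{M}_4$, and a variant of the same density argument for $\mathsf{M}_4=\mathsf{M}_5$ (you use the $\ell^1$-decomposition of normal functionals directly, while the paper argues via norm-density of the span of vector functionals and a WOT-convergence check). Your explicit remark that the second-leg version of \autoref{lem:tensconv} is needed, and your verification that the two readings of $(\id\otimes\omega)X$ agree, are points the paper passes over silently.
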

\begin{remark}\label{rmk:embedding}
By first part of \autoref{lem:descriptionofM} we see that we can restrict our attention to maps $\beta\colon\Cunot{G}\to \mathnormal{B}$ that are surjective (philosophically speaking, the maps that are Gelfand duals of embeddings $\widehat{\beta}\colon\mathbb{X}\hookrightarrow\GG$ of a quantum space $\mathbb{X}$ as a closed quantum subset of $\GG$, where $\mathnormal{B}=\Cnot{X}$).  
\end{remark}

\begin{proof}
 The first statement follows from standard reasoning, so we omit it.

It is obvious that $\mathsf{M}_5\subseteq\mathsf{M}_4\subseteq \mathsf{M}_3$. To show that $\mathsf{M}_3\subseteq \mathsf{M}_4$, let us fix an element $\omega\in\B{H}^*$ and consider $x=(\id\otimes\omega)X$. Recall that $\B{H}_*\subseteq\B{H}^*$ is weak$^*$-dense, so pick a net $(\omega_i)_{i\in I}\subseteq\B{H}_*$ such that $\omega_n\xrightarrow{i\in I}\omega$ in weak$^*$-topology. Then $x_i=(\id\otimes\omega_i)X\in\mathsf{M}_4$ and by \autoref{lem:tensconv} we have that $x_i\to x$ in $\sigma$-WOT. Hence $x\in\mathsf{M}_3$ by $\sigma$-WOT closedness of the latter and we are done.

Now as every functional in $\B{H}^*$ restricts to $\mathnormal{B}$ and $\mathnormal{C}$ we have that $\mathsf{M}_3\subseteq \mathsf{M}_1, \mathsf{M}_2$. But as $\mathnormal{B}, \mathnormal{C}\subseteq\B{H}$ are closed, any continuous functional from $\mathnormal{B}^{\ast}$ and $\mathnormal{C}^{\ast}$ extends to a continuous functional in $\B{H}^*$ by Hahn-Banach theorem, so $\B{H}^*\twoheadrightarrow \mathnormal{C}^{\ast}, \mathnormal{B}^{\ast}$. In particular, this means $\mathsf{M}_1, \mathsf{M}_2\subseteq \mathsf{M}_3$.

For the equality $\mathsf{M}_4=\mathsf{M}_5$ recall that the linear span of vector functionals $\omega_{\xi,\eta}$ is norm dense in $\B{H}_*$ (see, e.g. \cite[III.2.1.4]{encyclopaedia}). Now by standard calculation we show that if $\omega_i\xrightarrow{i\in I}\omega$ in norm, then $(\id\otimes\omega_i)X\xrightarrow{i\in I}(\id\otimes\omega)X$ in WOT. Pick then $\xi,\eta\in\mathcal{H}$, we have:
\[\begin{split}\left|\langle\xi|(\id\otimes(\omega-\omega_n))X|\eta\rangle\right|=|(\omega_{\xi,\eta}\otimes(\omega-\omega_n))X|\\
\leq\|\omega_{\xi,\eta}\otimes(\omega-\omega_n)\|\|X\|=|\langle\xi|\eta\rangle|\|\omega-\omega_n\|\|X\|\to0.
  \end{split}\]
By WOT-closedness of $\mathsf{M}_5$ any element of the generating set of $\mathsf{M}_4$ is in fact in $\mathsf{M}_5$, so we conclude by von Neumann's bicommutant Theorem.
\end{proof}
\begin{proposition}\label{prop:Misinvariant}
The algebra $\mathsf{M}_1$ is invariant, i.e.~$\widehat{\Delta}(\mathsf{M}_1)\subseteq \mathsf{M}_1\vnt \mathsf{M}_1$. If moreover $\tau_t^u(\ker\beta)\subseteq\ker\beta$ (in particular if $\mathbb{G}$ is Kac type), then $\mathsf{M}_1$ is is preserved by $\widehat{\tau}_{t}$ for each individual $t\in\mathbb{R}$. 
\end{proposition}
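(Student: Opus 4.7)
The plan is to leverage the bicharacter-type identity $(\widehat{\Delta}\otimes\id)X=X_{23}X_{13}$ from \eqref{eq:dualcorepresentation2} combined with the alternative descriptions of $\mathsf{M}_1$ furnished by \autoref{lem:descriptionofM}, together with the compatibility of $\tau^u$ with the Kac--Takesaki operator for the scaling-group statement.

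\textbf{Coproduct invariance.} By normality of $\widehat{\Delta}$ and the identification $\mathsf{M}_1=\mathsf{M}_5$, it suffices to show $\widehat{\Delta}(T_{\xi,\eta})\in\mathsf{M}_1\vnt\mathsf{M}_1$ for elements of the form $T_{\xi,\eta}=(\id\otimes\omega_{\xi,\eta})X$ with $\xi,\eta\in\mathcal{H}$. Applying $\id\otimes\id\otimes\omega_{\xi,\eta}$ to \eqref{eq:dualcorepresentation2} gives $\widehat{\Delta}(T_{\xi,\eta})=(\id\otimes\id\otimes\omega_{\xi,\eta})(X_{23}X_{13})$. The standard Hilbert-space identity $\omega_{\xi,\eta}(ab)=\sum_k\omega_{\xi,f_k}(a)\omega_{f_k,\eta}(b)$ (for an orthonormal basis $(f_k)$ of $\mathcal{H}$, with absolute convergence on each pair of bounded operators) lifted through slice maps is expected to yield
\[\widehat{\Delta}(T_{\xi,\eta})=\sum_{k}T_{f_k,\eta}\otimes T_{\xi,f_k},\]
convergent in the $\sigma$-weak topology of $\Linfhat{G}\vnt\Linfhat{G}$. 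Every partial sum is a finite combination of elementary tensors of elements of $\mathsf{M}_1$, so both the partial sums and their $\sigma$-weak limit lie in $\mathsf{M}_1\vnt\mathsf{M}_1$.

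\textbf{Scaling-group invariance.} Since $\{\tau_t^u\}_{t\in\mathbb{R}}$ is a one-parameter group, the assumption $\tau_t^u(\ker\beta)\subseteq\ker\beta$ for all $t$ upgrades to equality, so $\tau_t^u$ descends to an automorphism $\sigma_t\colon\mathnormal{C}\to\mathnormal{C}$ satisfying $\sigma_t\comp\beta=\beta\comp\tau_t^u$, where $\mathnormal{C}=\beta(\Cunot{G})$ as in \autoref{lem:descriptionofM}. Applying $\Lambda_{\widehat{\GG}}\otimes\id$ to the identity $(\widehat{\tau}^u_t\otimes\tau^u_t)\WW=\WW$ of \eqref{eq:compatibility2} and invoking \eqref{eq:compatibility1} yields $(\widehat{\tau}_t\otimes\tau^u_t)\wW=\wW$. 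Consequently
\[(\widehat{\tau}_t\otimes\sigma_t)X=(\widehat{\tau}_t\otimes\beta\comp\tau^u_t)\wW=(\id\otimes\beta)(\widehat{\tau}_t\otimes\tau^u_t)\wW=X,\]
so $(\widehat{\tau}_t\otimes\id)X=(\id\otimes\sigma_t^{-1})X$. Slicing with $\omega\in\mathnormal{C}^*$ and using that the slice maps commute with $\widehat{\tau}_t$ then gives $\widehat{\tau}_t\bigl((\id\otimes\omega)X\bigr)=(\id\otimes\omega\comp\sigma_t^{-1})X\in\mathsf{M}_2$. Since $\mathsf{M}_2=\mathsf{M}_1$ by \autoref{lem:descriptionofM} and $\widehat{\tau}_t$ is normal, this propagates to $\widehat{\tau}_t(\mathsf{M}_1)\subseteq\mathsf{M}_1$; running the same argument with $-t$ in place of $t$ yields the reverse inclusion, hence equality.

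\textbf{Main obstacle.} The only real technical subtlety is justifying the $\sigma$-weak convergence of the series in the first display; this is a standard calculation with resolutions of identity inside slice maps and the boundedness of $X$, analogous to the passages from $\mathsf{M}_3$ to $\mathsf{M}_4$ and $\mathsf{M}_4$ to $\mathsf{M}_5$ already carried out in \autoref{lem:descriptionofM}, and requires no new ideas.
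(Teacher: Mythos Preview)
Your proof is correct and follows essentially the same route as the paper's: for invariance you slice the identity $(\widehat{\Delta}\otimes\id)X=X_{23}X_{13}$ with vector functionals and insert a resolution of the identity exactly as the paper does, and for the scaling-group part you descend $\tau_t^u$ to an automorphism of $\mathnormal{C}$ (the paper's $\tau_t^{\mathnormal{C}}$) and use $(\widehat{\tau}_t\otimes\tau_t^u)\wW=\wW$ just as in the paper's chain of equalities. The only cosmetic difference is that you phrase the second part via $(\widehat{\tau}_t\otimes\sigma_t)X=X$ and slice afterwards, whereas the paper manipulates the generating set of $\mathsf{M}_2$ directly; the content is the same.
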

\begin{proof}
 For the invariance, let us first pick $x=(\id\otimes\omega_{\xi,\eta})X\in \mathsf{M}_5$ for some $\xi,\eta\in\mathcal{H}$, we will show that $\widehat{\Delta}(x)\in \mathsf{M}_5\vnt \mathsf{M}_5$. Pick an orthonormal basis $(e_j)_{j\in J}$ of $\mathcal{H}$ and recall that $\mathds{1}=\sum_{j\in J}|e_j\rangle\langle e_j|$ is a WOT-convergent resolution of identity into rank one projections. We compute: 
 \[\begin{split}
    \widehat{\Delta}(x)&=(\widehat{\Delta}\otimes\omega_{\xi,\eta})(X)=(\id\otimes \id\otimes\omega_{\xi,\eta})(X_{23}X_{13})=\\
    &=(\id\otimes \id\otimes\omega_{\xi,\eta})(X_{23}(\mathds{1}\otimes \mathds{1}\otimes\sum_{j\in J}|e_j\rangle\langle e_j|)X_{13})=\\
    &=\sum_{j\in J}(\id\otimes \id\otimes\omega_{\xi,\eta})(X_{23}(\mathds{1}\otimes \mathds{1}\otimes|e_j\rangle\langle e_j|)X_{13})=\\
    &=\sum_{j\in J}(\id\otimes \id\otimes\omega_{\xi,e_j})(X_{23})(\id\otimes \id\otimes\omega_{e_j,\eta}\rangle)(X_{13})=\\
    &=\sum_{j\in J}(\id\otimes\omega_{\xi,e_j})(X)\otimes (\id\otimes\omega_{e_j,\eta})(X)\in \mathsf{M}_5\vnt\mathsf{M}_5\\
   \end{split}\]
 We conclude by normality of $\widehat{\Delta}$ and equality $\mathsf{M}_1=\mathsf{M}_5$ obtained in \autoref{lem:descriptionofM}.
 
 Let $t\in\mathbb{R}$, assume $\ker(\beta)$ is $\tau_t^u$ invariant and let $\omega\in \mathnormal{B}^*$. Then there exists a (necessarily unique) functional $\omega_t\in \mathnormal{B}^*$ such that $\omega\comp\beta\comp\tau_t^u=\omega_t\comp\beta$. This follows from a general Banach space theory: there exists an isometry $s$ making the following diagram commute:
 
 \begin{center}
\begin{tikzpicture}
  [bend angle=36,scale=2,auto,
pre/.style={<-,shorten <=1pt,semithick},
post/.style={->,shorten >=1pt,semithick}]
\node (G) at (-1,1) {$\Cunot{G}$};
\node (B) at (1,1) {$\mathnormal{C}$}
edge [pre] node[auto,swap] {$\beta$} (G);
\node (Q) at (0,0) {$\bigslant{\Cunot{G}}{\ker\beta}$}
edge [pre] node[auto,swap] {$q$} (G)
edge [post] node[auto] {$s$} (B);
 \end{tikzpicture}  
 \end{center}

Now, as $\tau_t(\ker\beta)\subseteq\ker\beta$, using $s$ we can conclude the existence of a map $\tau^{\mathnormal{C}}_t\colon\mathnormal{C}\to\mathnormal{C}$ such that $\beta\comp\tau_t^u=\tau_t^{\mathnormal{C}}\comp\beta$. Then

\[\begin{split}
 \{(\id\otimes\omega\comp\beta)\wW \mid \omega\in\mathnormal{C}^*\}''=&\{(\id\otimes\omega\comp\beta)(\hat{\tau}_t\otimes\tau^u_t)\wW \mid \omega\in\mathnormal{C}^*\}''=\\
 =&\hat{\tau}_t\biggl(\{(\id\otimes\omega\comp\beta\comp\tau^u_t)\wW \mid \omega\in\mathnormal{C}^*\}''\biggr)=\\
 =&\hat{\tau}_t\biggl(\{(\id\otimes\omega\comp\tau^C_t\comp\beta)\wW \mid \omega\in\mathnormal{C}^*\}''\biggr)=\\
 =&\hat{\tau}_t\biggl(\{(\id\otimes\omega\comp\beta)\wW \mid \omega\in\mathnormal{C}^*\}''\biggr)
\end{split}\]

Because $\tau_t^{\mathnormal{C}}\colon\mathnormal{C}\to\mathnormal{C}$ is a bijection, also $(\tau_t^{\mathnormal{C}})^*\colon\mathnormal{C}^*\to\mathnormal{C}^*$ is a bijection.
\end{proof}

\begin{theorem}\label{thm:BVclosure}
 The minimal Baaj-Vaes subalgebra of $\Linfhat{G}$ containing $\mathsf{M}_1$ is given by \[\mathsf{M}_{BV}=(\bigcup_{t\in\mathbb{R}}\widehat{\tau}_t(\mathsf{M}_1))''.\] In particular, $\mathsf{M}_{BV}=\mathsf{M}_1$ if $\mathbb{G}$ is compact or discrete or if $\tau_t^u(\ker(\beta))\subseteq\ker(\beta)$ for all $t\in\mathbb{R}$.
\end{theorem}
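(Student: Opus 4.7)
The plan is to set $\mathsf{N}=(\bigcup_{t\in\mathbb{R}}\widehat{\tau}_t(\mathsf{M}_1))''$ and prove $\mathsf{N}=\mathsf{M}_{BV}$ by verifying that $\mathsf{N}$ is itself a Baaj-Vaes subalgebra of $\Linfhat{G}$. The inclusion $\mathsf{N}\subseteq\mathsf{M}_{BV}$ is automatic because any Baaj-Vaes subalgebra containing $\mathsf{M}_1$ must contain each $\widehat{\tau}_t(\mathsf{M}_1)$ and be weak$^*$-closed, so I focus on showing that $\mathsf{N}$ is invariant under $\widehat{\Delta}$, each $\widehat{\tau}_s$, and $\widehat{R}$.

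The $\widehat{\tau}_s$-invariance of $\mathsf{N}$ is immediate from $\widehat{\tau}_s\comp\widehat{\tau}_t=\widehat{\tau}_{s+t}$ and normality of $\widehat{\tau}_s$. The $\widehat{\Delta}$-invariance follows from the standard compatibility $\widehat{\Delta}\comp\widehat{\tau}_t=(\widehat{\tau}_t\otimes\widehat{\tau}_t)\comp\widehat{\Delta}$ combined with \autoref{prop:Misinvariant}: indeed
\[\widehat{\Delta}(\widehat{\tau}_t(\mathsf{M}_1))=(\widehat{\tau}_t\otimes\widehat{\tau}_t)(\widehat{\Delta}(\mathsf{M}_1))\subseteq(\widehat{\tau}_t\otimes\widehat{\tau}_t)(\mathsf{M}_1\vnt\mathsf{M}_1)\subseteq\mathsf{N}\vnt\mathsf{N},\]
and one extends to $\widehat{\Delta}(\mathsf{N})\subseteq\mathsf{N}\vnt\mathsf{N}$ by normality of $\widehat{\Delta}$.

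The main obstacle is the $\widehat{R}$-invariance of $\mathsf{N}$. The key input is that for every $\mu\in\mathnormal{B}^*$ the generator $y=(\id\otimes\mu)X$ lies in $\mathcal{D}(\widehat{S})$ with $\widehat{S}(y)=(\id\otimes\mu)X^*\in\mathsf{M}_1$; this follows from \eqref{eq:antipodecorep}, applied at the universal level to the representation $\flip((\id\otimes\beta)\WW)$ of $\widehat{\GG}$ and reduced via $\Lambda_{\widehat{\GG}}$, together with $*$-closedness of $\mathsf{M}_1$. To convert this into $\widehat{R}(y)\in\mathsf{N}$ via the polar decomposition $\widehat{R}=\widehat{S}\comp\widehat{\tau}_{-i/2}$, I would use a Gaussian smearing: set
\[y_n=\sqrt{n/\pi}\int_{\mathbb{R}}e^{-nt^2}\widehat{\tau}_t(y)\,dt\in\mathsf{N}.\]
Each $y_n$ is entire for $\widehat{\tau}$ and $y_n\to y$ $\sigma$-weakly. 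By analytic continuation and Gaussian decay one has
\[\widehat{\tau}_{-i/2}(y_n)=\sqrt{n/\pi}\int_{\mathbb{R}}e^{-n(t+i/2)^2}\widehat{\tau}_t(y)\,dt\in\mathsf{N}\]
(the integrand is weak$^*$-continuous, absolutely bounded, and $\mathsf{N}$-valued). Pulling $\widehat{S}$ through this Bochner integral---legitimate since $\widehat{S}$ is closed and commutes with $\widehat{\tau}_t$---yields
\[\widehat{R}(y_n)=\widehat{S}(\widehat{\tau}_{-i/2}(y_n))=\sqrt{n/\pi}\int_{\mathbb{R}}e^{-n(t+i/2)^2}\widehat{\tau}_t(\widehat{S}(y))\,dt\in\mathsf{N},\]
because $\widehat{S}(y)\in\mathsf{M}_1\subseteq\mathsf{N}$ and $\mathsf{N}$ is $\widehat{\tau}_t$-invariant. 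Normality of $\widehat{R}$ then gives $\widehat{R}(y)=\lim_n\widehat{R}(y_n)\in\mathsf{N}$; a bicommutant argument extends this to $\widehat{R}(\mathsf{M}_1)\subseteq\mathsf{N}$, and since $\widehat{R}$ commutes with $\widehat{\tau}_t$, also $\widehat{R}(\widehat{\tau}_t(\mathsf{M}_1))\subseteq\mathsf{N}$. Hence $\widehat{R}(\mathsf{N})\subseteq\mathsf{N}$, with equality by $\widehat{R}^2=\id$.

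For the final clause: if $\tau_t^u(\ker\beta)\subseteq\ker\beta$ for every $t\in\mathbb{R}$, \autoref{prop:Misinvariant} already delivers $\widehat{\tau}_t(\mathsf{M}_1)=\mathsf{M}_1$, so $\mathsf{N}=\mathsf{M}_1$. The compact and discrete cases reduce to the automaticity results recalled in \autoref{sec:homosubgr}: for $\GG$ compact the dual $\widehat{\GG}$ is discrete and \cite[Theorem 3.1]{NY14} (via co-duality) promotes invariance of $\mathsf{M}_1$ to the Baaj-Vaes property; for $\GG$ discrete the compact quantum group $\widehat{\GG}$ restricts its Haar state to $\mathsf{M}_1$, giving the same conclusion. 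In both cases $\mathsf{M}_1$ is already Baaj-Vaes, whence $\mathsf{M}_{BV}=\mathsf{M}_1$.
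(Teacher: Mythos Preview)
Your approach is essentially the paper's: set up the candidate algebra, check $\widehat{\tau}$- and $\widehat{\Delta}$-invariance, then use the polar decomposition of $\widehat{S}$ together with \eqref{eq:antipodecorep} to obtain $\widehat{R}$-invariance. The only real difference is cosmetic. Where the paper simply remarks that a $\widehat{\tau}$-invariant von Neumann algebra is automatically preserved by the analytic generator $\widehat{\tau}_{-i/2}$ (uniqueness of analytic continuation of $t\mapsto\widehat{\tau}_t|_{\mathsf{N}}$), you unfold this into an explicit Gaussian mollification and a Hille-type passage of $\widehat{S}$ through the integral. Both routes yield the same conclusion; yours is more hands-on, the paper's more succinct.

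One small correction: the element that \eqref{eq:antipodecorep} places in $\mathcal{D}(\widehat{S})$ is $(\id\otimes\mu)X^{*}$, with $\widehat{S}\bigl((\id\otimes\mu)X^{*}\bigr)=(\id\otimes\mu)X$, because the representation of $\widehat{\GG}$ is $\flip(X^{*})$, not $\flip\bigl((\id\otimes\beta)\WW\bigr)$. Your formula has the star on the wrong side. This does not damage the argument---the elements $(\id\otimes\mu)X^{*}=\bigl((\id\otimes\mu^{*})X\bigr)^{*}$ still generate $\mathsf{M}_1$, and your smearing/limit steps go through verbatim with these in place of $y$---but as written the domain claim for $\widehat{S}$ is reversed.
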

 \begin{proof}
For the purpose of the proof, let us denote by $\mathsf{M}_{\text{\text{RHS}}}$ the von Neumann algebra appearing on the right hand side of \autoref{thm:BVclosure}. This algebra is clearly $\widehat{\tau}$-invariant, and as $\mathsf{M}_1$ is invariant, using \cite[Theorem 1.9(2)]{MNW03} we conclude that $\mathsf{M}_{\text{\text{RHS}}}$ is again invariant. To see that $\mathsf{M}_{\text{\text{RHS}}}=\mathsf{M}_{BV}$, we need to show that it is preserved by the unitary antipode $\widehat{R}$.

First, for $t\in\mathbb{R}$ and $\omega\in\B{H}_*$ let us denote $x_{\omega,t}=(\widehat{\tau}_t\otimes\omega)X$ and observe that $\mathsf{M}_1=\mathsf{M}_4$ is generated by $x_{\omega,0}$ for all $\omega\in\B{H}_*$. Furthermore, for $t\in\mathbb{R}$ fixed, $\widehat{\tau}_t(\mathsf{M}_1)$ is generated by $\{x_{\omega,t} \mid \omega\in\B{H}_*\}$. In turn, $\mathsf{M}_{\text{\text{RHS}}}$ is generated by $\{ x_{\omega,t} \mid \omega\in\B{H}_*, t\in\mathbb{R}\}$. Indeed, from the above description we see that all elements $x_{\omega,t}\in\mathsf{M}_{\text{\text{RHS}}}$. The converse inclusion follows easily from von Neumann's bicommutant Theorem: if $y$ commutes with all $x_{\omega,t}$ for all $\omega$ and $t$, then $y\in\bigcap_{t\in\mathbb{R}}\widehat{\tau}_t(\mathsf{M}_1)'\subseteq (\bigcup_{t\in\mathbb{R}}\widehat{\tau}_t(\mathsf{M}_1))'$.

Now we use $\mathsf{M}_1=\mathsf{M}_4$ from \autoref{lem:descriptionofM}. Let us pick $\omega\in\B{H}_*$. Then $\omega^*$ defined as $\omega^*(a)=\overline{\omega(a^*)}$ is again a normal functional. Then by \eqref{eq:antipodecorep} we have that $(\id\otimes\omega)(X^*)=[(\id\otimes\omega^*)(X)]^*=x_{\omega^*,0}^*\in\mathcal{D}(\widehat{S})=\mathcal{D}(\widehat{\tau}_{i/2})$ (recall that in fact $X$ is antirepresentation, so $\flip(X^*)$ is a representation of $\mathbb{G}$), see \cite{SLW96}. Hence
\begin{equation}\label{eq:Sinvariance}
 \widehat{R}\bigl((\id\otimes\omega)(X^*)\bigr)=(\widehat{\tau}_{-i/2}\comp\widehat{S})\bigl((\id\otimes\omega)X^*\bigr)=\widehat{\tau}_{-i/2}\bigl((\id\otimes\omega)X\bigr)
\end{equation}
But as $\mathsf{M}_{\text{\text{RHS}}}$ is $\widehat{\tau}$-invariant, it is also preserved by its analytic generator, as it is defined uniquely: the analytic generator of $\widehat{\tau}_{-t}\restriction_{\mathsf{M}_{\text{RHS}}}$ is precisely $(\widehat{\tau}_{-i/2})\restriction_{\mathsf{M}_{\text{RHS}}}$ (this is in fact the uniqueness of analytic continuation of a function). Hence $\widehat{R}(x_{\omega^*,0}^*)\in\mathsf{M}_{\text{RHS}}$ for all $\omega\in\B{H}_*$ and in turn $\widehat{R}(x_{\omega,0})\in\mathsf{M}_{\text{RHS}}$ for all $\omega\in\B{H}_*$. Next, thanks to the relation $\widehat{\tau}_t\comp\widehat{R}=\widehat{R}\comp\widehat{\tau}_t$, it follows that $\widehat{R}(x_{\omega,t})=\widehat{\tau}_t(\widehat{R}(x_{\omega,0}))\in\widehat{\tau}_t(\mathsf{M}_{\text{RHS}})=\mathsf{M}_{\text{RHS}}$. Together with the description of the generating set in the first step, this finishes the proof of the main assertion.

The last part of the Theorem follows from the observation that in all these cases the algebra $\mathsf{M}_1$ is automatically $\widehat{\tau}$-invariant: in case of $\mathbb{G}$ compact or discrete this was discussed at the end of \autoref{sec:homosubgr} and the case $\tau_t^u(\ker(\beta))\subseteq\ker(\beta)$ for all $t\in\mathbb{R}$ follows from the discussion in \autoref{prop:Misinvariant}. 
\end{proof}

\subsection{Verification of defining properties}\label{sec:verification}
In this part we show that the quantum subgroup $\HH$ constructed in \autoref{sec:construction} indeed satisfies the defining properties of the Hopf image, i.e.~firstly, there exists $\tilde{\beta}\in \Mor(\Cunot{H},\mathnormal{B})$ as described in \bref{diagram}{Diagram 1}, that is showing that $(\pi,\mathbb{H},\tilde{\beta})\in\mathcal{C}_{\beta}$, and secondly, that it is an initial object of the category $\mathcal{C}_{\beta}$.
\begin{lemma}\label{lem:minimality}
 Let $\KK$ be a closed quantum subgroup of $\GG$ and denote the associated Hopf ${}^*$-ho\-mo\-mor\-phism and normal embedding by $\pi_{\KK}\in \Mor(\Cunot{G},\Cunot{K}))$ and $\gamma\colon\Linfhat{K}\to\Linfhat{G}$, respectively. Then $(\pi_{\KK}, \KK, \tilde{\beta})\in\mathcal{C}_{\beta}$, i.e.~there exists $\tilde{\beta}\in \Mor(\Cunot{K},\mathnormal{B})$ such that $\beta=\tilde{\beta}\comp\pi_{\KK}$ if and only if $\mathnormal{M}_1\subseteq \gamma(\Linfhat{K})$.
\end{lemma}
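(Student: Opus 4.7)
The plan is to exploit the intertwining relation \eqref{eq:reduceddualhomo2}, namely $(\id\otimes\pi_\KK)\wW^\GG=(\gamma\otimes\id)\wW^\KK$, in order to shuttle $X=(\id\otimes\beta)\wW^\GG$ between its $\widehat{\GG}$- and $\widehat{\KK}$-pictures. Each direction of the equivalence will then reduce to a slice-and-lift argument combined with the universal property of the semi-reduced Kac--Takesaki operator.

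For the easy direction, suppose $\beta=\tilde\beta\comp\pi_\KK$ for some $\tilde\beta\in\Mor(\Cunot{K},\mathnormal{B})$, and set $Y=(\id\otimes\tilde\beta)\wW^\KK\in\mult{\Cnothat{K}}{\mathnormal{B}}$. Using \eqref{eq:reduceddualhomo2},
\[
X=(\id\otimes\tilde\beta)(\id\otimes\pi_\KK)\wW^\GG=(\id\otimes\tilde\beta)(\gamma\otimes\id)\wW^\KK=(\gamma\otimes\id)Y.
\]
For every normal $\omega\in\B{H}_*$, normality of $\gamma$ yields $(\id\otimes\omega)X=\gamma\bigl((\id\otimes\omega)Y\bigr)\in\gamma(\Linfhat{K})$. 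By \autoref{lem:descriptionofM} the algebra $\mathsf{M}_1=\mathsf{M}_4$ is generated by such slices, whence $\mathsf{M}_1\subseteq\gamma(\Linfhat{K})$, and a fortiori $\mathnormal{M}_1\subseteq\gamma(\Linfhat{K})$.

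For the converse, assume $\mathnormal{M}_1\subseteq\gamma(\Linfhat{K})$. Since $\gamma(\Linfhat{K})$ is a von Neumann algebra, one actually has $\mathsf{M}_1\subseteq\gamma(\Linfhat{K})$. Combined with $\mathsf{M}_1=\mathsf{M}_4$ from \autoref{lem:descriptionofM}, the slice-map characterization of von Neumann tensor products (Tomiyama) gives $X\in\gamma(\Linfhat{K})\vnt\B{H}$. As $\gamma$ is normal and injective, $\gamma\otimes\id$ is a $*$-isomorphism onto its image, so there is a unique unitary $Y\in\Linfhat{K}\vnt\B{H}$ with $X=(\gamma\otimes\id)Y$. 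Applying $\gamma\otimes\gamma\otimes\id$ and using both the intertwining property $\widehat{\Delta}^\GG\comp\gamma=(\gamma\otimes\gamma)\comp\widehat{\Delta}^\KK$ (inherent in Vaes' notion of closed subgroup) and \eqref{eq:dualcorepresentation2}, one deduces $(\widehat{\Delta}^\KK\otimes\id)Y=Y_{23}Y_{13}$ by injectivity of $\gamma\otimes\gamma\otimes\id$.

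The last step is to convert $Y$ into a morphism $\tilde\beta\in\Mor(\Cunot{K},\mathnormal{B})$ with $(\id\otimes\tilde\beta)\wW^\KK=Y$ and to verify $\tilde\beta\comp\pi_\KK=\beta$. This uses the Meyer--Roy--Woronowicz universality of $\wW^\KK$: unitaries $Y\in\mult{\Cnothat{K}}{\mathnormal{B}}$ satisfying $(\widehat{\Delta}^\KK\otimes\id)Y=Y_{23}Y_{13}$ are in bijection with $\Mor(\Cunot{K},\mathnormal{B})$ via this formula (alternatively, one invokes \autoref{thm:kustermans} for $\flip(Y^*)$ regarded as a representation of $\widehat{\KK}$). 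Once $\tilde\beta$ is produced, $\tilde\beta\comp\pi_\KK=\beta$ follows from uniqueness in the corresponding $\wW^\GG$-parametrization of $\Mor(\Cunot{G},\mathnormal{B})$, since both morphisms return $X$. The main technical obstacle I anticipate is the \emph{regularity} step: upgrading $Y$ from an element of $\Linfhat{K}\vnt\B{H}$ to a genuine multiplier in $\mult{\Cnothat{K}}{\mathnormal{B}}$ and ensuring that the resulting morphism takes values in $\mathnormal{B}$ rather than merely in $\B{H}$. This should follow by combining $X\in\mult{\Cnothat{G}}{\mathnormal{B}}$ with the $C^*$-level behaviour of $\gamma$ encoded in \eqref{eq:reduceddualhomo1}, but carefully identifying the correct multiplier algebra is the most delicate point.
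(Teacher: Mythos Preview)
Your forward direction is exactly the paper's argument. For the converse you take a von Neumann-algebraic detour (Tomiyama slice maps, then pull back through $\gamma$), whereas the paper stays at the $C^*$-level: it invokes \cite[Lemma~1.4]{DKSS} to conclude directly that $X\in\mult{\mathnormal{M}_1}{\mathnormal{B}}$, hence (identifying via $\gamma$) $X\in\mult{\Cnothat{K}}{\mathnormal{B}}$, and then simply observes that $\widehat{\Delta}^{\KK}$ coincides with $\widehat{\Delta}^{\GG}$ on $\gamma(\Linfhat{K})$, so \eqref{eq:dualcorepresentation2} already says $\flip(X^*)$ is a representation of $\widehat{\KK}$ and \autoref{thm:kustermans} yields $\tilde{\beta}$. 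This sidesteps the regularity issue you flag.

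That said, your route works too, and the obstacle you anticipate is lighter than you suggest. Once you have a unitary $Y\in\Linfhat{K}\vnt\B{H}$ with $(\widehat{\Delta}^{\KK}\otimes\id)Y=Y_{23}Y_{13}$, the remark recorded just after \eqref{eq:representation} (i.e.\ \cite[Theorem~1.6(2)]{SLW96}) automatically upgrades $\flip(Y^*)$ to $\mult{\K{H}}{\Cnothat{K}}$, so \autoref{thm:kustermans} gives $\tilde{\beta}\in\Mor(\Cunot{K},\K{H})$ without further work. That $\tilde{\beta}$ actually lands in $\mathnormal{B}$ then follows \emph{a posteriori} from the factorization you already check: since $\pi_{\KK}$ is surjective (Vaes-closed implies Woronowicz-closed, \cite[Theorem~3.5]{DKSS}), one has $\tilde{\beta}(\Cunot{K})=\beta(\Cunot{G})\subseteq\mult{\mathnormal{B}}$ with non-degeneracy inherited from $\beta$. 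Incidentally, your explicit verification of $\tilde{\beta}\comp\pi_{\KK}=\beta$ via uniqueness in the $\wW^{\GG}$-parametrization is a detail the paper leaves implicit.
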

\begin{proof}
 Assume that $(\pi_{\KK},\KK,\tilde{\beta})\in\mathcal{C}_{\beta}$ and pick $\omega\in \mathnormal{B}^{\ast}$. Using \eqref{eq:reduceddualhomo1} and \eqref{eq:reduceddualhomo2}, we have that 
 \[(\id\otimes\omega)(\id\otimes\beta)(\wW^{\mathbb{G}})=(\id\otimes\omega)(\id\otimes\tilde{\beta}\comp\pi_{\KK})(\wW^{\mathbb{G}})=\gamma\bigg((\id\otimes\omega\comp\tilde{\beta})\wW^{\mathbb{K}}\bigg)\]
 
 Hence the algebra $\mathcal{M}_1$ constructed for $\beta$ and the corresponding algebra constructed for $\tilde{\beta}$, seen as subalgebras of $\Linfhat{G}$, coincide, hence so do their $C^{\ast}$-envelopes $\mathnormal{M}_1$ and the corresponding one for $\tilde{\beta}$. This shows the necessity.
 
 Assume that $\mathnormal{M}_1\subseteq\gamma(\Linfhat{K})$. In order to get a morphism $\tilde{\beta}\in \Mor(\Cunot{K},\mathnormal{B})$ it is enough to show that $\flip(X^*)$ is a representation of $\widehat{\mathbb{K}}$ by \autoref{thm:kustermans}.
 
 Now observe that \cite[Lemma 1.4]{DKSS} shows that $X\in\mult{\mathnormal{M}_1}{\mathnormal{B}}\subseteq\mult{\Cnothat{H}}{\mathnormal{B}}$. But as $\Delta_{\widehat{\mathbb{K}}}$ and $\Delta_{\widehat{\mathbb{G}}}\restriction_{\gamma(\Linfhat{K})}$ coincide and from \eqref{eq:dualcorepresentation2}, we have that $X_{23}X_{13}=(\Delta_{\widehat{\mathbb{K}}}\otimes \id)X$, so $\flip(X^*)$ satisfies hypothesis of \autoref{thm:kustermans}.
\end{proof}
\begin{proof}[Proof of \autoref{thmintro:existence}]
From \autoref{lem:minimality} we get that $\mathbb{H}$ constructed at the end of \autoref{sec:construction} can be endowed with the morphism $\tilde{\beta}$ completing the desired factorization, i.e.~$(\pi,\mathbb{H},\tilde{\beta})\in\mathcal{C}_{\beta}$. Let now $\mathds{k}=(\pi_{\mathbb{K}},\mathbb{K},\beta')\in\mathcal{C}_{\beta}$. From \autoref{lem:minimality} we have that $M_0\subseteq\Linfhat{K}$ and $\Linfhat{K}$ is a $R^{\widehat{\GG}}$-,$\tau_t^{\widehat{\GG}}$- and $\Delta_{\widehat{\GG}}$-invariant subalgebra of $\Linfhat{G}$. As $\Linfhat{H}$ is chosen to be a minimal von Neumann subalgebra with this property, we necessarily have $\Linfhat{H}\subseteq\Linfhat{K}$. In particular the inclusion map satisfies the defining property of $\HH$ being a closed quantum subgroup of $\KK$, so we conclude by \cite[Theorem 3.5]{DKSS} and \cite[Lemma 2.5]{KSS}.
\end{proof}
\subsection{More on \texorpdfstring{$X\in\mult{\Cnothat{G}}{\mathnormal{B}}$}{X} and \texorpdfstring{$\theta\in \Mor(\Cnot{G},\Cnot{G}\otimes \mathnormal{B})$}{theta}}\label{sec:MRWlike}
In this section we investigate the mutual relation between the objects describing the \emph{embedding} $\mathbb{X}\hookrightarrow\GG$ as phrased in \autoref{rmk:embedding}, i.e.~the morphism $\beta\in \Mor(\Cunot{G},\mathnormal{B})$, the unitary antirepresentation $X\in\mult{\Cnothat{G}}{\mathnormal{B}}$ and the morphism $\theta\in \Mor(\Cnot{G},\Cnot{G}\otimes \mathnormal{B})$ in the spirit of \cite{MRW12}. 

From the discussion in \autoref{sec:construction} it is clear that out of $\beta$ one can canonically construct the unitary $X$, which is an antirepresentation of $\widehat{\GG}$. But \autoref{thm:kustermans} (applied to $\flip(X^*)$ as in the proof of \autoref{lem:minimality}) shows that to a unitary $X\in\mult{\Cnothat{G}}{\mathnormal{B}}$ there corresponds a unique morphism $\beta\in \Mor(\Cunot{G},\mathnormal{B})$. 

Again, from the discussion in \autoref{sec:construction} it is clear that out of a unitary $X$, which is an antirepresentation of $\widehat{\GG}$ one can uniquely construct the morphism $\theta\in \Mor(\Cnot{G},\Cnot{G}\otimes\mathnormal{B})$. Observe that this morphism satisfies the following condition: $(\Delta\otimes\id)\comp\theta=(\id\otimes\,\theta)\comp\Delta$. Indeed, for $a\in\Cnot{G}$ we have that
\begin{equation}\label{eq:rightaction1}(\Delta\otimes \id)\comp\theta(a)=(\Delta\otimes \id)(X(a\otimes\mathds{1})X^{\ast})=\ww_{12}X_{13}(a\otimes\mathds{1}\otimes\mathds{1})X_{13}^{\ast}\ww_{12}^{\ast}.                                      
\end{equation}
Now using \eqref{eq:dualcorepresentation1}, we can continue calculations from \eqref{eq:rightaction1} and get:
\begin{equation}\label{eq:rightaction2}
 \begin{split}
\ww_{12}X_{13}(a\otimes\mathds{1}\otimes\mathds{1})X_{13}^{\ast}\ww_{12}^{\ast}=&X_{23}\ww_{12}X_{23}^{\ast}(a\otimes\mathds{1}\otimes\mathds{1})X_{23}\ww_{12}^{\ast}X_{23}^{\ast}=\\
=&X_{23}\ww_{12}(a\otimes\mathds{1}\otimes\mathds{1})\ww_{12}^{\ast}X_{23}^{\ast}=\\
=&X_{23}\bigg(\ww(a\otimes\mathds{1})\ww^{\ast}\bigg)_{12}X_{23}=\\
=&X_{23}(\Delta(a))_{12}X_{23}^{\ast}=(\id\otimes\,\theta)\comp\Delta(a)
 \end{split}
\end{equation}
\begin{proposition}\label{prop:MRWlike}
 Assume $\theta\in \Mor(\Cnot{G},\Cnot{G}\otimes \mathnormal{B})$ is such that $(\Delta\otimes \id)\comp\theta=(\id\otimes\,\theta)\comp\Delta$. Then there is a unique unitary $X\in\mult{\Cnothat{G}}{\mathnormal{B}}$ such that $\theta(a)=X(a\otimes\mathds{1})X^{\ast}$ and $X$ is an antirepresentation of $\widehat{\GG}$.
\end{proposition}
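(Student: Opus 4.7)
The plan is to construct the desired $X$ as the ``$\ww$-free'' factor of the unitary $U:=(\id\otimes\theta)\ww\in\mult{\Cnothat{G}}{\Cnot{G}}{\mathnormal{B}}$. First, I will translate the hypothesis $(\Delta\otimes\id)\comp\theta=(\id\otimes\theta)\comp\Delta$ into an identity for $U$ by applying both sides to the second leg of $\ww$ and invoking $(\id\otimes\Delta)\ww=\ww_{12}\ww_{13}$; this gives the key relation
\[(\id\otimes\Delta\otimes\id)U=\ww_{12}U_{134}\]
in the $4$-fold leg-numbered tensor algebra.

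Next, I will set $Y:=\ww_{12}^{*}U$ and multiply the key relation on the left by $(\id\otimes\Delta\otimes\id)(\ww_{12}^{*})=\ww_{13}^{*}\ww_{12}^{*}$, obtaining $(\id\otimes\Delta\otimes\id)(Y)=\ww_{13}^{*}U_{134}$; crucially, the right-hand side has trivial middle leg in the $4$-tensor. Slicing this middle leg by an arbitrary $\omega\in\Cnot{G}^{*}$ and rewriting in terms of $L_\omega:=(\omega\otimes\id)\comp\Delta$ yields $(\id\otimes L_\omega\otimes\id)(Y)=\omega(\mathds{1})\,Y$ for every such $\omega$. Applying an outer slice $(\omega_1\otimes\id\otimes\omega_3)$ at the first and third legs reduces this to $\Delta(b_{\omega_1,\omega_3})=\mathds{1}\otimes b_{\omega_1,\omega_3}$ for every slice $b_{\omega_1,\omega_3}:=(\omega_1\otimes\id\otimes\omega_3)Y\in\mult{\Cnot{G}}$; by the standard left-invariance principle for locally compact quantum groups (the only $y\in\mult{\Cnot{G}}$ satisfying $\Delta(y)=\mathds{1}\otimes y$ are scalar multiples of $\mathds{1}$), each such $b_{\omega_1,\omega_3}$ is a scalar multiple of $\mathds{1}$. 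Since this holds for all $\omega_1,\omega_3$, the middle leg of $Y$ itself is trivial and $Y=X_{13}$ for a unique unitary $X\in\mult{\Cnothat{G}}{\mathnormal{B}}$; equivalently $U=\ww_{12}X_{13}$.

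With $X$ constructed, the remaining properties drop out of the pentagon equation. Applying $\id\otimes\id\otimes\theta$ to $\ww_{12}\ww_{13}\ww_{23}=\ww_{23}\ww_{12}$ produces $\ww_{12}U_{134}U_{234}=U_{234}\ww_{12}$; substituting $U=\ww_{12}X_{13}$, using that $X_{14}$ commutes with $\ww_{23}$ (disjoint leg supports), and invoking the pentagon once more collapses this to $X_{23}\ww_{12}X_{23}^{*}=\ww_{12}X_{13}$, which is precisely \eqref{eq:dualcorepresentation1} and via \eqref{eq:dualcoproduct} is equivalent to the antirepresentation condition $(\Delta_{\widehat{\GG}}\otimes\id)X=X_{23}X_{13}$. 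Slicing the same identity on the first leg by $\omega\in\Lonehat{G}$ now delivers $\theta(a)=X(a\otimes\mathds{1})X^{*}$ on the norm-dense set $\{(\omega\otimes\id)\ww : \omega\in\Lonehat{G}\}\subset\Cnot{G}$, extending to all of $\Cnot{G}$ by continuity; uniqueness of $X$ is immediate from the uniqueness of the factorization $U=\ww_{12}X_{13}$. The main obstacle is the factorization step itself: showing the middle leg of $Y=\ww_{12}^{*}U$ is trivial is the only place where the hypothesis $(\Delta\otimes\id)\theta=(\id\otimes\theta)\Delta$ is essentially used, and it rests on the left-invariance rigidity of locally compact quantum groups.
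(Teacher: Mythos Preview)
Your construction is essentially the paper's: your $Y=\ww_{12}^{*}U$ is precisely the paper's $\tilde{X}$, and your identity $(\id\otimes\Delta\otimes\id)Y=Y_{134}$ is the same as the paper's $\ww_{23}\tilde{X}_{124}\ww_{23}^{*}=\tilde{X}_{134}$ once one uses $\Delta(y)=\ww(y\otimes\mathds{1})\ww^{*}$. Where the paper invokes \cite[Theorem 2.6]{MRW12} to kill the middle leg, you unpack that step into the slice argument $\Delta(b_{\omega_1,\omega_3})=\mathds{1}\otimes b_{\omega_1,\omega_3}\Rightarrow b_{\omega_1,\omega_3}\in\mathbb{C}\mathds{1}$, which is exactly the ergodicity result that theorem records. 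Your derivation of the antirepresentation identity via the pentagon equation is a legitimate alternative to the paper's direct computation of $(\widehat{\Delta}\otimes\id\otimes\id)\tilde{X}$.

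The one place where your argument is genuinely incomplete is uniqueness. Saying that ``uniqueness of $X$ is immediate from the uniqueness of the factorization $U=\ww_{12}X_{13}$'' only shows that \emph{your construction} produces a single $X$; it does not show that every unitary $X'$ satisfying both $\theta(a)=X'(a\otimes\mathds{1})X'^{*}$ and $(\widehat{\Delta}\otimes\id)X'=X'_{23}X'_{13}$ coincides with the constructed one. Indeed, the implementation condition alone is preserved under $X'\mapsto X'(\mathds{1}\otimes u)$ for any unitary $u\in\mult{\mathnormal{B}}$, so something more is needed. The fix is short and in the spirit of your own argument: if $X'$ implements $\theta$ then $(\id\otimes\theta)\ww=X'_{23}\ww_{12}(X')_{23}^{*}$, and the antirepresentation condition (equivalently \eqref{eq:dualcorepresentation1} for $X'$) turns this into $(\id\otimes\theta)\ww=\ww_{12}X'_{13}$, forcing $X'_{13}=\ww_{12}^{*}U=X_{13}$. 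The paper instead argues directly that $X'=X(\mathds{1}\otimes u)$ and then uses $(\widehat{\Delta}\otimes\id)$ on both sides to force $u=\mathds{1}$; either route closes the gap.
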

\begin{proof}
The proof is essentially the same as first part of the proof of \cite[Theorem 5.3]{MRW12}, but we repeat it for later use.

Denote $\tilde{X}=\ww_{12}^{\ast}\left((\id\otimes\,\theta)(\ww)\right)\in\mult{\Cnothat{G}}{\Cnot{G}}{\mathnormal{B}}$. We will show that $\ww_{23}\tilde{X}_{124}\ww_{23}^{\ast}=\tilde{X}_{134}$, then using \cite[Theorem 2.6]{MRW12} we conclude that $\tilde{X}\in\mult{\Cnothat{G}}{\mathbb{C}\mathds{1}}{\mathnormal{B}}$, so in fact there exists $X\in\mult{\Cnothat{G}}{\mathnormal{B}}$ with $\tilde{X}=X_{13}$. We compute

\begin{align*}
 \ww_{23}\tilde{X}_{124}\ww_{23}^{\ast}&=\ww_{23}\ww_{12}^{\ast}\ww_{23}^{\ast}\ww_{23}\bigg((\id\otimes\,\theta)\ww\bigg)_{124}\ww_{23}^{\ast}=\\
 &=\ww_{13}^{\ast}\ww_{12}^{\ast}(\id\otimes\Delta\otimes\id)\bigg((\id\otimes\,\theta)\ww\bigg)=\\
 &=\ww_{13}^{\ast}\ww_{12}^{\ast}\bigg((\id\otimes\id\otimes\,\theta)(\id\otimes\Delta)\ww\bigg)=\\
 &=\ww_{13}^{\ast}\ww_{12}^{\ast}\bigg((\id\otimes\id\otimes\,\theta)\ww_{12}\ww_{23}\bigg)=\\
 &=\ww_{13}^{\ast}\bigg((\id\otimes \id\otimes\,\theta)\ww_{23}\bigg)=\tilde{X}_{134}
\end{align*}
We now check that $\theta(a)=X(a\otimes\mathds{1})X^{\ast}$. This is equivalent to showing that $\theta(a)_{13}=\tilde{X}(a\otimes\mathds{1}\otimes\mathds{1})\tilde{X}^*$. We compute:
\begin{align*}
   \tilde{X}(a\otimes\mathds{1}\otimes\mathds{1})\tilde{X}^{\ast}&=\ww_{12}^{\ast}\bigg((\id\otimes\,\theta)\ww\bigg)(a\otimes\mathds{1}\otimes\mathds{1})\bigg((\id\otimes\,\theta)\ww^{\ast}\bigg)\ww_{12}=\\
   &=\ww_{12}^{\ast}\bigg((\id\otimes\,\theta)(\ww(a\otimes\mathds{1})\ww^{\ast})\bigg)\ww_{12}=\\
   &=\ww_{12}^{\ast}\bigg((\id\otimes\,\theta)\Delta(a)\bigg)\ww_{12}=\\
   &=\ww_{12}^{\ast}\bigg((\Delta\otimes \id)\theta(a)\bigg)\ww_{12}=\\
   &=\ww_{12}^{\ast}(\ww_{12}\theta(a)_{13}\ww_{12}^{\ast})\ww_{12}=\theta(a)_{13}
  \end{align*}
Showing that $X$ is antirepresentation amounts to showing that $(\widehat{\Delta}\otimes\id\otimes\id)\tilde{X}=\tilde{X}_{234}\tilde{X}_{134}$. Using \eqref{eq:dualcoproduct} and the definition of $\tilde{X}$ we get that:
\[\begin{split}
   (\widehat{\Delta}\otimes\id\otimes\id)\tilde{X}=&\biggl(\bigl((\widehat{\Delta}\otimes\id)\ww^*\bigr)\otimes\mathds{1}\biggr)(\widehat{\Delta}\otimes\,\theta)\ww)\\
   =&\ww_{13}^*\ww_{23}^*((\id\otimes\id\otimes\,\theta)\ww_{23})((\id\otimes\id\otimes\,\theta)\ww_{13})=\\
   =&\ww_{13}^*\tilde{X}_{234}(\widehat{\Delta}\otimes\,\theta)\ww)=\tilde{X}_{234}\tilde{X}_{134}
  \end{split}\]
where in the last equality we used the fact that $\tilde{X}_{234}=X_{24}$ commutes with $\ww_{13}^*$. To prove uniqueness, assume $Y\in\mult{\Cnot{G}}{\mathnormal{B}}$ is another such unitary. Because slices of $\ww$ are dense in $\Cnot{G}$, we have that \[X(a\otimes\mathds{1})X^*=Y(a\otimes\mathds{1})Y^*\]
for all $a\in\Cnot{G}$ is equivalent to saying that 
\[X_{23}\ww_{12}X_{23}^*=Y_{23}\ww_{12}Y_{23}^*\]
Rearranging terms, this is equivalent to
\[\ww_{12}^*(Y_{23}^*X_{23})\ww_{12}=Y_{23}^*X_{23}\]
which, in turn, is equivalent to 
\[(\widehat{\Delta}\otimes\id)(Y^*X)=Y_{13}^*X_{13}\]
We conclude from \cite[Corollary 2.9]{MRW12} that in fact $X(\mathds{1}\otimes u)=Y$ for some unitary $u\in\mult{\mathnormal{B}}$. Applying $(\widehat{\Delta}\otimes\id)$ to both sides of this equality we get
 \begin{align*}
X_{23}X_{13}(\mathds{1}\otimes\mathds{1}\otimes u)=(\widehat{\Delta}\otimes\id)\bigl(X(u\otimes\mathds{1})\bigr)&=(\widehat{\Delta}\otimes\id)Y\\
&=Y_{23}Y_{13}=X_{23}(\mathds{1}\otimes\mathds{1}\otimes u)X_{13}(\mathds{1}\otimes\mathds{1}\otimes u)  
 \end{align*}

 and hence $u=\mathds{1}$, which finishes the proof.
\end{proof}
Summarizing, there are three equivalent ways of studying an embedding $\mathbb{X}\hookrightarrow\GG$ of a locally compact quantum space into a locally compact quantum group (we recall that $\mathnormal{B}=\Cnot{X}$), these are as follows:
\begin{enumerate}
 \item the morphism $\beta\in \Mor(\Cunot{G},\mathnormal{B})$;
 \item the unitary $X\in\mult{\Cnothat{G}}{\mathnormal{B}}$, which is an antirepresentation of $\widehat{\GG}$ 
 \end{enumerate}
 
 and
 
 \begin{enumerate}\setcounter{enumi}{2}
 \item the morphism $\theta\in \Mor(\Cnot{G},\Cnot{G}\otimes\mathnormal{B})$ satisfying \[(\Delta\otimes\id)\comp\theta=(\id\otimes\,\theta)\comp\Delta,\] which corresponds to the partial action $\mathbb{X}\curvearrowright\GG$ by right shifts.
\end{enumerate}
Fixing a non-degenerate representation of $\mathnormal{B}\subseteq\B{H}$ and denoting by $\mathsf{B}=\mathnormal{B}''$ the WOT-closure of $\mathnormal{B}$ in the WOT-topology induced by this embedding, we can (similarly as in the case of ho\-mo\-mor\-phisms), study the above objects in the von Neumann algebraic context. Indeed, we have $X\in\Linfhat{G}\vnt\mathsf{B}\subseteq\Linfhat{G}\vnt\B{H}$ and $\theta\colon\Linf{G}\to\Linf{G}\vnt\mathsf{B}$ satisfying \[(\Delta\otimes\id)\comp\theta=(\id\otimes\,\theta)\comp\Delta,\] because $\theta$ is obtained by conjugating with a unitary and as such extends to the WOT-closure of $\Cnot{G}$. We will switch between these viewpoints freely later on. The passage from von Neumann level to $C^*$-level is pretty much the same as in \autoref{sec:LCQGrep}.

\section{The case of two subgroups}\label{sec:twosubgroups}
The goal of this section is to discuss the notion of \emph{compact quantum group generated by two closed quantum subgroups} in the sense of \cite{BCV}, as well as the notion of \emph{joint fullness of a family of (corestriction functors induced by) Hopf quotients} in the sense of \cite{ChirvaRFD}, in the context of Hopf image and to extend it to the non-compact case.

Let then $\GG$ be a locally compact quantum group and let $\HH_1, \HH_2$ be its two closed subgroups (for $i=1,2$, denote by $\pi_i\colon\Cunot{G}\to\Cunot{H}{i}$ the corresponding Hopf surjection, by $\gamma_i\colon\Linfhat{H}{i}\to\Linfhat{G}$ the corresponding inclusions and by $V^{\HH_i}\in\Linfhat{G}\vnt\Linf{H}{i}$ the corresponding bicharacters). Consider the two ideals: $C_0(\GG\setminus(\HH_1\cup\HH_2))\colon=\ker(\pi_1)\cap\ker(\pi_2)$ and  $C_0(\GG\setminus(\HH_1\cdot\HH_2))\colon=\ker((\pi_1\otimes\pi_2)\comp\Delta^u_{\GG})$ and the two quotients %
\begin{equation}\label{eq:unionofsubgroups}
q^{\cup}\colon\Cunot{G}\to C_0(\HH_1\cup\HH_2)=\bigslant{\Cunot{G}}{C_0(\mathbb{G}\setminus(\HH_1\cup\HH_2))} 
\end{equation}
 and 
 \begin{equation}\label{eq:productofsubgroups}
q^{\bullet}\colon\Cunot{G}\to C_0(\HH_1\cdot\HH_2)=\bigslant{\Cunot{G}}{C_0(\GG\setminus(\HH_1\cdot\HH_2))}      
\end{equation}
\begin{proposition}\label{prop:twosubgroups}
 The following von Neumann subalgebras of $\Linfhat{G}$ are equal:
 \begin{itemize}
  \item $\mathsf{M}_{1,2}$, the smallest von Neumann algebra containing both $\gamma_1(\Linfhat{H}{1})$ and $\gamma_2(\Linfhat{H}{2})$;
  \item $\mathsf{M}_{\bullet}=\{(\id\otimes \omega\comp q^{\bullet})\wW\colon\omega\in C_0(\HH_1\cdot\HH_2)^*\}''$;
  \item $\mathsf{M}_{\cup}=\{(\id\otimes \omega\comp q^{\cup})\wW\colon\omega\in C_0(\HH_1\cup\HH_2)^*\}''$.
  \item $\mathsf{M}_{V,1,2}=\{(\id\otimes\omega_1\otimes\omega_2)V^{\HH_1}_{12}V^{\HH_2}_{13}:\omega_i\in\Lone{H}{i}\}''$
 \end{itemize}

\end{proposition}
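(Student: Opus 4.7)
My plan is to show that each of $\mathsf{M}_{\cup}$, $\mathsf{M}_{\bullet}$, and $\mathsf{M}_{V,1,2}$ coincides with $\mathsf{M}_{1,2}$. Three recurring ingredients will do the work: the identity $(\id\otimes\pi_i)\wW^{\GG}=(\gamma_i\otimes\id)\wW^{\HH_i}$ of \eqref{eq:reduceddualhomo2} together with its reduced counterpart $V^{\HH_i}=(\gamma_i\otimes\id)\ww^{\HH_i}$; the universal coproduct identity $(\id\otimes\Delta^u_{\GG})\wW^{\GG}=\wW^{\GG}_{12}\wW^{\GG}_{13}$; and a Kaplansky density argument used to extract individual factors from bilinear expressions.

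For $\mathsf{M}_{\cup}=\mathsf{M}_{1,2}$ I note that the quotient embeds as $C_0(\HH_1\cup\HH_2)\hookrightarrow\Cunot{H}{1}\oplus\Cunot{H}{2}$ via $(\pi_1,\pi_2)$. By Hahn-Banach, every $\omega\in C_0(\HH_1\cup\HH_2)^*$ lifts to $\omega_1\oplus\omega_2$ with $\omega_i\in\Cunot{H}{i}^*$, so applying \eqref{eq:reduceddualhomo2} gives
\[(\id\otimes\omega\comp q^{\cup})\wW^{\GG}=\gamma_1\bigl((\id\otimes\omega_1)\wW^{\HH_1}\bigr)+\gamma_2\bigl((\id\otimes\omega_2)\wW^{\HH_2}\bigr)\in\gamma_1(\Linfhat{H}{1})+\gamma_2(\Linfhat{H}{2}),\]
establishing $\mathsf{M}_\cup\subseteq\mathsf{M}_{1,2}$. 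Conversely, each $\omega_i\comp\pi_i$ vanishes on $\ker\pi_1\cap\ker\pi_2\subseteq\ker\pi_i$ and thus factors through $q^\cup$; since slices of $\wW^{\HH_i}$ generate $\Linfhat{H}{i}$ as a von Neumann algebra, this yields $\gamma_i(\Linfhat{H}{i})\subseteq\mathsf{M}_\cup$ and so $\mathsf{M}_{1,2}\subseteq\mathsf{M}_\cup$.

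For $\mathsf{M}_\bullet\subseteq\mathsf{M}_{1,2}$ and $\mathsf{M}_{V,1,2}\subseteq\mathsf{M}_{1,2}$ I combine the universal coproduct identity with the factorisation $\iota\comp q^\bullet=(\pi_1\otimes\pi_2)\comp\Delta^u_{\GG}$ through the embedding $\iota\colon C_0(\HH_1\cdot\HH_2)\hookrightarrow\Cunot{H}{1}\otimes\Cunot{H}{2}$, and invoke \eqref{eq:reduceddualhomo2} to obtain
\[(\id\otimes\iota\comp q^\bullet)\wW^{\GG}=\bigl((\gamma_1\otimes\id)\wW^{\HH_1}\bigr)_{12}\bigl((\gamma_2\otimes\id)\wW^{\HH_2}\bigr)_{13}.\]
Slicing by elementary tensor functionals $\omega_1\otimes\omega_2$ (weak-$^*$ dense in $(\Cunot{H}{1}\otimes\Cunot{H}{2})^*$, with the passage justified by \autoref{lem:tensconv}) realises generators of $\mathsf{M}_\bullet$ as products $\gamma_1((\id\otimes\omega_1)\wW^{\HH_1})\cdot\gamma_2((\id\otimes\omega_2)\wW^{\HH_2})\in\gamma_1(\Linfhat{H}{1})\cdot\gamma_2(\Linfhat{H}{2})\subseteq\mathsf{M}_{1,2}$. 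The identical recipe applied to $V^{\HH_i}=(\gamma_i\otimes\id)\ww^{\HH_i}$ in place of $(\gamma_i\otimes\id)\wW^{\HH_i}$ gives $\mathsf{M}_{V,1,2}\subseteq\mathsf{M}_{1,2}$.

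For the remaining inclusions $\mathsf{M}_{1,2}\subseteq\mathsf{M}_\bullet$ and $\mathsf{M}_{1,2}\subseteq\mathsf{M}_{V,1,2}$ I use Kaplansky density: since $\mathds{1}\in\gamma_2(\Linfhat{H}{2})$ and $\gamma_2(\Linfhat{H}{2})$ is the $\sigma$-weak closure of the linear span of its $\HH_2$-slice generators, there is a bounded net of such linear combinations converging $\sigma$-weakly to $\mathds{1}$. Fixing an $\HH_1$-slice $a=\gamma_1((\id\otimes\omega_1)\wW^{\HH_1})$, separate $\sigma$-weak continuity of multiplication on bounded sets shows that $a=a\cdot\mathds{1}$ is a $\sigma$-weak limit of elements of $\mathsf{M}_\bullet$, whence $a\in\mathsf{M}_\bullet$; taking the double commutant in the $\HH_1$-parameter yields $\gamma_1(\Linfhat{H}{1})\subseteq\mathsf{M}_\bullet$, and symmetry handles $\gamma_2$. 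The same argument applies verbatim to $\mathsf{M}_{V,1,2}$. The main technical obstacle will be this careful Kaplansky-based extraction of individual factors $\gamma_i(\Linfhat{H}{i})$ from the tensor-factorised bilinear generators of $\mathsf{M}_\bullet$ and $\mathsf{M}_{V,1,2}$, coupled with the need to check that weak-$^*$-dense tensor functionals in $(\Cunot{H}{1}\otimes\Cunot{H}{2})^*$ do furnish generating slices for the full von Neumann algebra $\mathsf{M}_\bullet$.
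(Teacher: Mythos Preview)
Your proposal is correct and follows essentially the same route as the paper. The paper handles $\mathsf{M}_\cup=\mathsf{M}_{1,2}$ by the same $\ker(q^\cup)=\ker(\pi_1\oplus\pi_2)$ observation, and for $\mathsf{M}_\bullet$ and $\mathsf{M}_{V,1,2}$ it likewise reduces to elementary tensor functionals and the identity $(\id\otimes q^\bullet)\wW=\bigl((\id\otimes\pi_1)\wW\bigr)_{12}\bigl((\id\otimes\pi_2)\wW\bigr)_{13}$; it then simply asserts $\bigl(\gamma_1(\Linfhat{H}{1})\cdot\gamma_2(\Linfhat{H}{2})\bigr)''=\mathsf{M}_{1,2}$ without further comment, whereas your Kaplansky-density step is just an explicit justification of that equality (and is genuinely needed in the $\mathsf{M}_{V,1,2}$ case, where no bounded counit is available on the reduced level). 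One minor simplification you could note for $\mathsf{M}_\bullet$: since $\wW^{\HH_i}$ lives at the universal level, the counit $\epsilon_{\HH_i}\in\Cunot{H}{i}^*$ satisfies $(\id\otimes\epsilon_{\HH_i})\wW^{\HH_i}=\mathds{1}$, so taking $\omega_2=\epsilon_{\HH_2}$ (resp.\ $\omega_1=\epsilon_{\HH_1}$) already places the $\gamma_1$- (resp.\ $\gamma_2$-) slices directly among the generators of $\mathsf{M}_\bullet$, bypassing the approximation argument there.
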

\begin{proof}
 $\mathsf{M}_{1,2}=\mathsf{M}_{\cup}$. Observe that $\ker(q^{\cup})=\ker(\pi_1\oplus\pi_2)$, hence arguing as in the proof of the first part of \autoref{lem:descriptionofM} we may replace $q^{\cup}$ with $\pi_1\oplus\pi_2$ in the definition of $\mathsf{M}_{\cup}$ (and the functionals are on a different $C^*$-algebra then). Recall that $\bigl(\Cunot{H}{1}\oplus\Cunot{H}{2})^*=\Cunot{H}{1}^*\oplus\Cunot{H}{2}^*$, hence every $\omega$ appearing in the definition of $\mathsf{M}_{\cup}$ can be written as $\omega=\omega_1\oplus\omega_2$ for $\omega_i\in\Cunot{H}{i}^*$. Hence \[\mathsf{M}_{\cup}=\biggl(\gamma_1(\Linfhat{H}{1})+\gamma_2(\Linfhat{H}{2})\biggr)''=\mathsf{M}_{1,2}\] as desired.
 
 $\mathsf{M}_{1,2}=\mathsf{M}_{\bullet}$. Recall that the linear span of the functionals of the form $\omega_1\otimes\omega_2$ on $\mathnormal{A}\otimes\mathnormal{B}$ is weak$^*$-dense in $(\mathnormal{A}\otimes\mathnormal{B})^*$ for any $C^*$-algebras $\mathnormal{A},\mathnormal{B}$. Further, as $(\id\otimes q^{\bullet})\wW=\bigl((\id\otimes\pi_1)\wW\bigr)_{12}\bigl((\id\otimes\pi_2)\wW\bigr)_{13}$, to compute $\mathsf{M}_{\bullet}$ it is enough to understand the von Neumann algebra generated by operators of the form $\bigl((\id\otimes\omega_1\comp\pi_1)\wW\bigr)\bigl((\id\otimes\omega_2\comp\pi_2)\wW\bigr)$ thanks to \autoref{lem:tensconv}. But it is then clear that
 \[\mathsf{M}_{\cup}=\biggl(\gamma_1(\Linfhat{H}{1})\cdot\gamma_2(\Linfhat{H}{2})\biggr)''=\mathsf{M}_{1,2}\]
 as desired. 
 
 $\mathsf{M}_{V,1,2}=\mathsf{M}_{1,2}$. Similarly as in the previous step, it is immediate to see that \[\mathsf{M}_{V,1,2}=\biggl(\gamma_1(\Linfhat{H}{1})\cdot\gamma_2(\Linfhat{H}{2})\biggr)''=\mathsf{M}_{1,2}\]
\end{proof}
\begin{df}\label{def:twosubgroupsgenerateJKS}
We call the Hopf image of either of the maps $q^{\bullet}$ and $q^{\cup}$ the closed quantum subgroup generated by $\mathbb{H}_1$ and $\mathbb{H}_2$ and denote it by $\overline{\langle\mathbb{H}_1,\mathbb{H}_2\rangle}$. More generally, if $\HH_i\subset\GG$ is a family of subgroups ($i\in I$), then we denote by $\overline{\langle\bigcup_{i\in I}\HH_i\rangle}$ the subgroup generated by this family: the subgroup corresponding to the Baaj-Vaes subalgebra $\bigl(\bigcup_{i\in I}\gamma_i(\Linfhat{H}{i})\bigr)''$.
\end{df}
\section{Separation of ho\-mo\-mor\-phisms}\label{sec:separation}
Let $\GG,\KK$ be locally compact quantum groups and let a ho\-mo\-mor\-phism of quantum groups $\GG\to\KK$ be described by a Hopf ${}^*$-ho\-mo\-mor\-phism $\varphi\in \Mor(\Cunot{K},\Cunot{G})$ and a bicharacter $V\in\mult{\Cnothat{K}}{\Cnot{G}}$. Let $\mathnormal{B}$ be a $C^{\ast}$-algebra, $\beta\in \Mor(\Cunot{G},\mathnormal{B})$, the corresponding unitary $X\in\mult{\Cnothat{G}}{\mathnormal{B}}$ is given by $X=(\id\otimes\beta)\wW^{\GG}$. Let us describe in detail the unitary corresponding to $\beta\comp\varphi$. It is given by $Y=(\id\otimes\beta\comp\varphi)\wW^{\KK}$.

\begin{lemma}\label{lem:composition}
The unitaries $X,Y,V$ obey the following equation:
\[ Y_{13}=V_{12}^{\ast}X_{23}V_{12}X_{23}^{\ast}\in\mult{\Cnothat{K}}{\mathnormal{K}(L^2(\GG))}{\mathnormal{B}}\]
\end{lemma}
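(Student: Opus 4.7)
The strategy is to apply the slice map $(\Lambda_{\widehat{\KK}}\comp\widehat{\varphi})\otimes\id\otimes\beta$ to both sides of the pentagonal relation \eqref{eq:universalpentagonal},
\[
\WW^{\GG}_{13}=(\Ww^{\GG}_{12})^{*}\,\wW^{\GG}_{23}\,\Ww^{\GG}_{12}\,(\wW^{\GG}_{23})^{*},
\]
read at the universal level in $\mult{\Cunothat{G}\otimes\mathnormal{K}(L^{2}(\GG))}{\Cunot{G}}$. This slice is chosen precisely so that it sends $\WW^{\GG}$, $\Ww^{\GG}$, $\wW^{\GG}$ to $Y$, $V$, $X$ respectively, so the identity in the statement is obtained by reading off each factor on both sides.

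To verify these identifications, one uses $\wW^{\KK}=(\Lambda_{\widehat{\KK}}\otimes\id)\WW^{\KK}$ together with \eqref{eq:dualhomo} in the form $(\id\otimes\varphi)\WW^{\KK}=(\widehat{\varphi}\otimes\id)\WW^{\GG}$ to compute
\[
Y=(\id\otimes\beta\comp\varphi)\wW^{\KK}=(\Lambda_{\widehat{\KK}}\otimes\beta)(\id\otimes\varphi)\WW^{\KK}=(\Lambda_{\widehat{\KK}}\comp\widehat{\varphi}\otimes\beta)\WW^{\GG}.
\]
Recalling that the bicharacter $V\in\mult{\Cnothat{K}}{\Cnot{G}}$ is the double reduction $V=(\Lambda_{\widehat{\KK}}\otimes\Lambda_{\GG})\bigl((\widehat{\varphi}\otimes\id)\WW^{\GG}\bigr)$ of its universal companion, the identity $\Ww^{\GG}=(\id\otimes\Lambda_{\GG})\WW^{\GG}$ yields
\[
V=(\Lambda_{\widehat{\KK}}\comp\widehat{\varphi}\otimes\id)\Ww^{\GG},
\]
while $X=(\id\otimes\beta)\wW^{\GG}$ holds by definition.

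Applying $(\Lambda_{\widehat{\KK}}\comp\widehat{\varphi})\otimes\id\otimes\beta$ to both sides of the pentagon now produces the claimed equation in $\mult{\Cnothat{K}\otimes\mathnormal{K}(L^{2}(\GG))}{\mathnormal{B}}$: the middle leg of $\WW^{\GG}_{13}$ is trivial, so the left-hand side becomes $\bigl[(\Lambda_{\widehat{\KK}}\comp\widehat{\varphi}\otimes\beta)\WW^{\GG}\bigr]_{13}=Y_{13}$; on the right-hand side, $\Ww^{\GG}_{12}$ has trivial third leg and therefore maps to $V_{12}$, whereas $\wW^{\GG}_{23}$ has trivial first leg and maps to $X_{23}$, so the right-hand side reads $V_{12}^{*}X_{23}V_{12}X_{23}^{*}$. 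The only bookkeeping issue is to check that the slice extends to the multiplier algebras in which the pentagon and its image live, and that it commutes with the reducing morphisms $\Lambda_{\widehat{\GG}}$, $\Lambda_{\GG}$ used in the rewriting of $Y$ and $V$; both are routine since all the constituent maps are $\Mor$-morphisms in the sense of \cite{SLW95a} and obey the compatibility relations recalled in \autoref{sec:preliminaries}.
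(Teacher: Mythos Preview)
Your proof is correct and follows exactly the same approach as the paper: apply $(\Lambda_{\widehat{\KK}}\comp\widehat{\varphi})\otimes\id\otimes\beta$ to both sides of the universal pentagonal relation \eqref{eq:universalpentagonal}, after first identifying $Y=(\Lambda_{\widehat{\KK}}\comp\widehat{\varphi}\otimes\beta)\WW^{\GG}$ and $V=(\Lambda_{\widehat{\KK}}\comp\widehat{\varphi}\otimes\id)\Ww^{\GG}$ via \eqref{eq:dualhomo}. Your write-up is in fact slightly cleaner than the paper's, which carries a harmless typo in the LHS computation (a $\WW^{\GG}$ that should read $\WW^{\KK}$ after the duality is applied).
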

\begin{proof}
 Let $\widehat{\varphi}\in\Mor(\Cunothat{G},\Cunothat{K})$ be the Hopf ${}^*$-ho\-mo\-mor\-phism linked to $\varphi$ by \eqref{eq:dualhomo}, that is, this Hopf ${}^*$-ho\-mo\-mor\-phism describes the ho\-mo\-mor\-phism of quantum groups $\widehat{\KK}\to\widehat{\GG}$ dual to the fixed ho\-mo\-mor\-phism $\GG\to\KK$. Application of $\Lambda_{\widehat{\KK}}\otimes\Lambda_{\GG}$ to both sides of \eqref{eq:dualhomo} yields
 \[V=(\id\otimes\Lambda_{\GG}\comp\varphi)\wW^{\KK}=(\Lambda_{\widehat{\KK}}\comp\widehat{\varphi}\otimes\id)\Ww^{\GG}\]
  Application of $\Lambda_{\widehat{\KK}}\comp\widehat{\varphi}\otimes\id\otimes\beta$ to both sides of \eqref{eq:universalpentagonal} gives:
 \[\begin{split}
   (\text{LHS})=&(\Lambda_{\widehat{\KK}}\comp\widehat{\varphi}\otimes\id\otimes\beta)\WW_{13}^{\GG}=(\id\otimes\id\otimes\beta)\bigg(\Lambda_{\widehat{\KK}}\comp\widehat{\varphi}\otimes \id\otimes\id)\WW_{13}^{\GG}\bigg)=\\
   =&(\id\otimes\id\otimes\beta)\bigg(\Lambda_{\widehat{\KK}}\otimes\id\otimes\varphi)\WW_{13}^{\GG}\bigg)=\bigg((\id\otimes\beta\comp\varphi)\wW^{\GG}\bigg)_{13}=Y_{13}
   \end{split}\]
and  
 \[\begin{split}
(\text{RHS})=&(\Lambda_{\widehat{\KK}}\comp\widehat{\varphi}\otimes\id\otimes\beta)\bigg((\Ww_{12}^{\GG})^{\ast}\wW_{23}^{\GG}\Ww_{12}^{\GG}(\wW_{23}^{\GG})^{\ast}\bigg)=\\
=&\bigg((\Lambda_{\widehat{\KK}}\comp\widehat{\varphi}\otimes\id)\Ww^{\GG}\bigg)_{12}^{\ast}\!\bigg((\id\otimes\beta)\wW^{\GG}\bigg)_{23}\!\bigg((\Lambda_{\widehat{\KK}}\comp\widehat{\varphi}\otimes\id)\Ww^{\GG}\bigg)_{12}\bigg((\id\otimes\beta)\wW^{\GG}\bigg)_{23}^{\ast}=\\
=&\bigg((\id\otimes\Lambda_{\GG}\comp\varphi)(\wW^{\KK})^{\ast}\bigg)_{12}X_{23}\bigg((\id\otimes\Lambda_{\GG}\comp\varphi)(\wW^{\KK})\bigg)_{12}X_{23}^{\ast}=V_{12}^{\ast}X_{23}V_{12}X_{23}^{\ast}
   \end{split}\]
\end{proof}
Recall the vague interpretation $\mathnormal{B}=C_0(\mathbb{X})$ and $\hat{\beta}\colon\mathbb{X}\hookrightarrow\GG$. With the notation as above, by the \emph{restriction of a ho\-mo\-mor\-phism $\GG\to\KK$ to the subset $\mathbb{X}\subset\GG$} we mean the map $\beta\comp\varphi\in\Mor(\Cunot{K},\mathnormal{B})$.

\begin{theorem}[{\autoref{thmintro:separation}}]\label{thm:homomorphismseparation}
 Assume $\beta$ is generating. Consider two quantum group ho\-mo\-mor\-phisms $\GG\to\KK$ described by $\varphi,\tilde{\varphi} \in \Mor(\Cunot{K},\Cunot{G})$. If their restrictions to $\mathbb{X}$ coincide, then the ho\-mo\-mor\-phisms coincide on the whole of $\GG$: $\varphi=\tilde{\varphi}$. 
\end{theorem}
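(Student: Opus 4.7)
The plan is to translate the equality $\beta\circ\varphi = \beta\circ\tilde\varphi$ into a commutation relation for the bicharacters $V, \tilde V \in \mult{\Cnothat{K}}{\Cnot{G}}$ of $\varphi, \tilde\varphi$ (via \autoref{lem:composition}), propagate that commutation across the Baaj-Vaes closure using the scaling invariance of bicharacters, and then use bicharacter rigidity to conclude $V = \tilde V$, hence $\varphi = \tilde\varphi$ by the one-to-one correspondence recalled in \autoref{sec:homosubgr}.

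First, \autoref{lem:composition} applied to each of $\varphi, \tilde\varphi$ gives $Y_{13} = V_{12}^{*}X_{23}V_{12}X_{23}^{*}$ and $\tilde Y_{13} = \tilde V_{12}^{*}X_{23}\tilde V_{12}X_{23}^{*}$. The hypothesis forces $Y = \tilde Y$, which after cancelling $X_{23}^{*}$ on the right and rearranging yields
\[
W_{12}X_{23} = X_{23}W_{12}, \qquad W := \tilde V V^{*} \in \mult{\Cnothat{K}}{\Cnot{G}}.
\]
Slicing the third leg by $\omega \in \mathnormal{B}^{*}$ already shows that $W$ commutes with $\mathds{1}\otimes m$ for every $m \in \mathsf{M}_1$.

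The key step is promoting this to commutation of $W$ with $\mathds{1}\otimes y$ for all $y \in \mathsf{M}_{BV}$. I would exploit that, as bicharacters, $V$ and $\tilde V$ satisfy $(\widehat{\tau}_t^{\KK}\otimes\tau_t^{\GG})V = V$, so $W$ inherits the same invariance. Implementing both $\tau_t^{\GG}$ on $\Linf{G}$ and $\widehat{\tau}_t^{\GG}$ on $\Linfhat{G}$ by the same scaling unitary $P^{it}$ on $L^{2}(\GG)$, conjugating the commutation equation by $\mathds{1}\otimes P^{it}\otimes\mathds{1}$ produces $((\id\otimes\tau_t^{\GG})W)_{12}$ commuting with $((\widehat{\tau}_t^{\GG}\otimes\id)X)_{23}$. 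Using the invariance of $W$ to rewrite $(\id\otimes\tau_t^{\GG})W = (\widehat{\tau}_{-t}^{\KK}\otimes\id)W$ and then applying the automorphism $\widehat{\tau}_t^{\KK}\otimes\id$ recovers $W$ on the left; slicing the third leg now delivers $W$ commuting with $\mathds{1}\otimes\widehat{\tau}_t^{\GG}(m)$ for all $m\in\mathsf{M}_1$ and all $t\in\mathbb{R}$. Taking WOT-bicommutants and invoking \autoref{thm:BVclosure} yields commutation with $\mathds{1}\otimes\mathsf{M}_{BV}$, which equals $\mathds{1}\otimes\Linfhat{G}$ since $\beta$ is generating.

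Finally, slicing the first leg of $W$ by any $\mu \in \Cnothat{K}^{*}$ gives $(\mu\otimes\id)W \in \mult{\Cnot{G}}\subseteq\Linf{G}$ commuting with all of $\Linfhat{G}$. Invoking the standard ergodicity identity $\Linf{G}\cap\Linfhat{G}' = \mathbb{C}\mathds{1}$ (equivalently, $\Delta(x) = x\otimes\mathds{1}$ forces $x\in\mathbb{C}\mathds{1}$), each such slice is a scalar, so $W = W_0\otimes\mathds{1}$ for some unitary $W_0$. Substituting $\tilde V = (W_0\otimes\mathds{1})V$ into the bicharacter relation $(\id\otimes\Delta_{\GG})\tilde V = \tilde V_{12}\tilde V_{13}$ and using $(\id\otimes\Delta_{\GG})V = V_{12}V_{13}$ collapses to $(W_0\otimes\mathds{1}\otimes\mathds{1})V_{13} = V_{13}$; multiplying on the right by $V_{13}^{*}$ forces $W_0 = \mathds{1}$, whence $\tilde V = V$ and $\tilde\varphi = \varphi$. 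The principal obstacle is the middle step: commutation with $\mathsf{M}_1$ does not a priori extend to $\mathsf{M}_{BV}$, and what makes the extension work is the double scale-invariance of $W$ inherited from both bicharacters, together with the spatial implementation of $\tau_t^{\GG}$ and $\widehat{\tau}_t^{\GG}$ by a single unitary on $L^{2}(\GG)$.
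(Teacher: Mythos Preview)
Your proof is correct and follows essentially the same route as the paper's: translate $Y=\tilde{Y}$ via \autoref{lem:composition} into a relation between $V$, $\tilde{V}$ and $X$, slice to land in $\mathsf{M}_1$, propagate through the scaling group using $(\widehat{\tau}_t^{\KK}\otimes\tau_t^{\GG})V=V$ to reach $\mathsf{M}_{BV}=\Linfhat{G}$, and conclude. The only packaging difference is that the paper phrases the sliced relation as equality of the associated right quantum group homomorphisms $\rho,\tilde{\rho}$ on $\mathsf{M}_1$ and finishes by citing \cite[Theorem 5.3]{MRW12}, whereas you carry the commutant element $W=\tilde{V}V^*$ explicitly and redo the end game by hand via $\Linf{G}\cap\Linfhat{G}'=\mathbb{C}\mathds{1}$ and the bicharacter identity---this is exactly the content hidden in that citation.
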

\begin{proof}
 Let us denote by $V, \tilde{V}\in\mult{\Cnothat{K}}{\Cnot{G}}$ and by $\rho, \tilde{\rho}\in\Mor(\Cnot{K},\Cnot{G}\otimes\Cnot{K})$ the bicharacters and right quantum group ho\-mo\-mor\-phisms corresponding to $\varphi,\tilde{\varphi}$, respectively. 

Now, if $\beta\comp\varphi=\beta\comp\tilde{\varphi}$, then the corresponding unitaries coincide: $Y=\tilde{Y}\in\mult{\Cnothat{K}}{\mathnormal{B}}$. Using \autoref{lem:composition} one may rewrite this as
\[\tilde{V}^{\ast}_{12}X_{23}\tilde{V}_{12}X_{23}^{\ast}=V^{\ast}_{12}X_{23}V_{12}X_{23}^{\ast}\]
or equivalently
\[\tilde{V}^{\ast}_{12}X_{23}\tilde{V}_{12}=V^{\ast}_{12}X_{23}V_{12}.\]
By slicing with $\omega\in \mathnormal{B}^{\ast}$ on the third leg we see that the right quantum group ho\-mo\-mor\-phisms $\rho$ and $\tilde{\rho}$ agree on the $\mathsf{M}_1$ (they are normal ${}^{\ast}$-ho\-mo\-mor\-phisms). But applying $(\tau^{\widehat{\KK}}_t\otimes \tau_t^{\GG}\otimes\id)$ and remembering that $(\tau_t^{\widehat{\KK}}\otimes \tau^{\GG})(V)=V$ (and likewise for $\tilde{V}$) we see that $\rho$ and $\tilde{\rho}$ have the same values on $\widehat{\tau}_t(\mathsf{M}_1)$ and once again, by normality of $\rho, \tilde{\rho}$, they coincide on $(\bigcup_{t\in\mathbb{R}}\widehat{\tau}_t(\mathsf{M}_1))''=\mathsf{M}_{BV}$ (by \autoref{thm:BVclosure}). By assumption, $\mathsf{M}_{BV}=\Linfhat{G}$, hence $\rho=\tilde{\rho}$. This ends the proof, as $\rho$ and $\tilde{\rho}$ determine the ho\-mo\-mor\-phisms $\GG\to\KK$ uniquely (see \cite[Theorem 5.3]{MRW12}).
\end{proof}
\autoref{thm:homomorphismseparation} says that a ho\-mo\-mor\-phism between quantum groups is uniquely determined by its values on a quantum generating set. The rest of \autoref{sec:separation} is devoted to reversing the implication in case $\GG$ is discrete: if a subset has the property that ho\-mo\-mor\-phisms between quantum groups are uniquely determined by their values on this set, then the set in question is generating. In other words, we will manufacture two different ho\-mo\-mor\-phisms that agree on a given subset, provided that it is not generating. To this end, let $\beta\in\Mor(c_0(\GG), \mathnormal{B})$ be a morphism with Hopf image $(\pi,\HH,\tilde{\beta})$ such that $\HH\subsetneq\GG$. Denote by $\KK=\GG\ast_{\HH}\GG$. Using \cite[Theorem 3.4 \& Corollary 3.5]{Wang95}, one can describe $\KK$ as follows.

Consider first the  free product $\KK'=\GG\ast\GG$, it is given by the $C^{\ast}$-algebra $C^u(\widehat{\KK'})=C^u(\widehat{\GG})\ast C^u(\widehat{\GG})$ (amalgamated over $\mathbb{C}\mathds{1}$). Denote by $i_1, i_2$ the maps $C^u(\widehat{\GG})\to C^u(\widehat{\GG})\ast C^u(\widehat{\GG})=C^u(\widehat{\KK'})$ putting the copy of $C^u(\widehat{\GG})$ in the first and second spot, respectively, these maps are Hopf morphisms. Denote by $\widehat{\pi}\colon C^u(\widehat{\HH})\to C^u(\widehat{\GG})$ the ho\-mo\-mor\-phism dual to $\pi$.
\begin{lemma}\label{lem:notsurjective}
 $\widehat{\pi}$ is not surjective.
\end{lemma}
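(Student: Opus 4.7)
The plan is to argue by contradiction: I would assume that $\widehat{\pi}\colon\Cunothat{H}\to\Cunothat{G}$ is surjective and derive that the Baaj-Vaes subalgebra realising $\HH$ inside $\Linfhat{G}$ is all of $\Linfhat{G}$, so that $\HH=\GG$, contradicting the standing hypothesis $\HH\subsetneq\GG$.

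The first step is to use the compatibility \eqref{eq:reduceddualhomo1}, which relates $\widehat{\pi}$ to the canonical normal embedding $\gamma\colon\Linfhat{H}\hookrightarrow\Linfhat{G}$ associated with the Vaes-closed subgroup $\HH\subset\GG$, namely $\Lambda_{\widehat{\GG}}\comp\widehat{\pi}=\gamma\comp\Lambda_{\widehat{\HH}}$. Evaluating both sides on $\Cunothat{H}$, and using that $\Lambda_{\widehat{\HH}}$ is by definition surjective onto $\Cnothat{H}$ while $\widehat{\pi}$ is surjective by the working hypothesis, one obtains
\[
\gamma(\Cnothat{H})=\Cnothat{G}\quad\text{as subsets of }\Linfhat{G}.
\]

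The second step is to take weak$^*$-closures inside $\Linfhat{G}$. On the one hand, $\gamma(\Linfhat{H})$ is a von Neumann subalgebra of $\Linfhat{G}$, hence weak$^*$-closed, because the image of a von Neumann algebra under a normal injective $*$-homomorphism is a von Neumann subalgebra of the target. On the other hand, $\Cnothat{G}$ is weak$^*$-dense in $\Linfhat{G}$ by the general theory of locally compact quantum groups. Combining $\Cnothat{G}=\gamma(\Cnothat{H})\subseteq\gamma(\Linfhat{H})$ with these two observations forces $\Linfhat{G}\subseteq\gamma(\Linfhat{H})$, i.e.~$\gamma$ is already a $*$-isomorphism of von Neumann algebras.

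Finally, the Baaj-Vaes correspondence recalled in \autoref{sec:homosubgr} says that the Vaes-closed subgroup $\HH\subset\GG$ is determined by the Baaj-Vaes subalgebra $\gamma(\Linfhat{H})\subseteq\Linfhat{G}$, and the subalgebra $\Linfhat{G}$ itself corresponds to the subgroup $\GG$. So the equality $\gamma(\Linfhat{H})=\Linfhat{G}$ produced above gives $\HH=\GG$, contradicting $\HH\subsetneq\GG$. The only point requiring a little care is keeping track of which reducing morphisms (universal-to-reduced, and on $\GG$ versus $\widehat{\GG}$) are meant in \eqref{eq:reduceddualhomo1}, and invoking the (standard) weak$^*$-closedness of $\gamma(\Linfhat{H})$; beyond that the argument is a direct assembly of the preliminaries, and notably does not use the discreteness of $\GG$.
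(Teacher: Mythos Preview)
Your proposal is correct and follows essentially the same route as the paper's proof: assume $\widehat{\pi}$ is surjective, invoke \eqref{eq:reduceddualhomo1} to deduce that $\gamma$ is surjective onto $\Linfhat{G}$, and conclude $\HH=\GG$, a contradiction. The paper's argument is stated in a single line (``we then have that $\gamma\colon\Linfhat{H}\to\Linfhat{G}$ is surjective''), whereas you spell out the intermediate steps---passing through $\gamma(\Cnothat{H})=\Cnothat{G}$ and then taking weak$^*$-closures---that the paper leaves implicit.
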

\begin{proof}
 Assume it is. Using \eqref{eq:reduceddualhomo1} we then have that $\gamma\colon\Linfhat{H}\to\Linfhat{G}$ is surjective, which contradicts our assumption $\HH\neq\GG$.
\end{proof}
Denote by $I\subseteq C^u(\widehat{\KK'})$ the closed ideal generated by $\{i_1\!\circ\!\widehat{\pi}(x)-i_2\!\circ\!\widehat{\pi}(x): x\in C^u(\widehat{\HH})\}$. 
\begin{lemma}\label{lem:smallideal}
There exists $y\in C^u(\GG)$ such that $i_1(y)-i_2(y)\notin I$.
\end{lemma}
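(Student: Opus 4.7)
The plan is to argue by contradiction: assume $i_1(y)-i_2(y)\in I$ for every $y\in C^u(\widehat{\GG})$ and deduce that $\widehat{\pi}$ is an isomorphism, contradicting \autoref{lem:notsurjective}. Write $q\colon C^u(\widehat{\KK'})\to C^u(\widehat{\KK'})/I$ for the quotient and $\bar{i}_j=q\comp i_j$, so that the hypothesis reads $\bar{i}_1=\bar{i}_2$.

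Under this assumption $\bar{i}_1$ is both surjective and split injective. Surjectivity is immediate: the images of $i_1$ and $i_2$ generate $C^u(\widehat{\KK'})=C^u(\widehat{\GG})\ast C^u(\widehat{\GG})$, and under $\bar{i}_1=\bar{i}_2$ the image of $\bar{i}_1$ alone generates $C^u(\widehat{\KK'})/I$. For the splitting, apply the universal property of the free product to the pair $(\id,\id)\colon C^u(\widehat{\GG})\rightrightarrows C^u(\widehat{\GG})$, which trivially satisfies $\id\comp\widehat{\pi}=\id\comp\widehat{\pi}$, to obtain a Hopf ${}^*$-morphism $\rho\colon C^u(\widehat{\KK'})\to C^u(\widehat{\GG})$ with $\rho\comp i_j=\id$ for $j=1,2$. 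Since $\rho$ annihilates every generator $i_1\widehat{\pi}(x)-i_2\widehat{\pi}(x)$ of $I$, it descends to $\bar{\rho}\colon C^u(\widehat{\KK'})/I\to C^u(\widehat{\GG})$ satisfying $\bar{\rho}\comp\bar{i}_1=\id$. Combining, $\bar{i}_1$ is a Hopf ${}^*$-isomorphism $C^u(\widehat{\GG})\xrightarrow{\sim}C^u(\widehat{\KK'})/I$.

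We then invoke \cite[Theorem 3.4 \& Corollary 3.5]{Wang95}: the quotient $C^u(\widehat{\KK'})/I$ is the universal C*-algebra of the amalgamated free product compact quantum group $\widehat{\KK}=\widehat{\GG}\ast_{\widehat{\HH}}\widehat{\GG}$, whose irreducible corepresentations are parametrised by reduced alternating words in the non-trivial irreducibles of $\widehat{\GG}$ relative to the image of $\widehat{\HH}$. The Hopf isomorphism $\bar{i}_1$ translates into a quantum group isomorphism $\widehat{\KK}\cong\widehat{\GG}$, dually $\KK\cong\GG$. By Wang's corepresentation description, such a collapse is possible only if every reduced alternating word has length one, which happens precisely when $\widehat{\pi}$ is surjective, contradicting \autoref{lem:notsurjective}.

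The main obstacle is the final step: transferring the collapse $\widehat{\KK}\cong\widehat{\GG}$ into the representation-theoretic conclusion $\HH=\GG$ rests on Wang's structural description of corepresentations of an amalgamated free product. A more hands-on alternative avoiding this input is to construct two distinct ${}^*$-ho\-mo\-mor\-phisms $\alpha_1,\alpha_2\colon C^u(\widehat{\GG})\to D$ with $\alpha_1\comp\widehat{\pi}=\alpha_2\comp\widehat{\pi}$ directly from the non-surjectivity of $\widehat{\pi}$; the universal property of the amalgamated quotient would then produce $\bar{\Phi}\colon C^u(\widehat{\KK'})/I\to D$ with $\bar{\Phi}\comp\bar{i}_j=\alpha_j$, and any $y$ with $\alpha_1(y)\neq\alpha_2(y)$ would immediately witness $i_1(y)-i_2(y)\notin I$. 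This alternative, however, requires similar representation-theoretic bookkeeping to guarantee existence of such a pair $\alpha_1,\alpha_2$.
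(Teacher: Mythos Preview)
Your reduction in steps 1--4 is correct: assuming $\bar{i}_1=\bar{i}_2$, the fold map $\rho$ descends to a two-sided inverse of $\bar{i}_1$, so $\bar{i}_1\colon C^u(\widehat{\GG})\to C^u(\widehat{\KK})$ is a Hopf ${}^*$-isomorphism. The gap is in step 5, and you have essentially flagged it yourself. The claim that \cite[Theorem 3.4 \& Corollary 3.5]{Wang95} parametrises the irreducible corepresentations of an \emph{amalgamated} free product $\widehat{\GG}\ast_{\widehat{\HH}}\widehat{\GG}$ by reduced alternating words is not what Wang proves: that description is valid for the \emph{unamalgamated} free product only. For amalgamated free products no such combinatorial classification is available in general (already for discrete groups the representation theory of $G\ast_{H}G$ is far more intricate than ``reduced words''), so the inference ``collapse $\Rightarrow$ every reduced word has length one $\Rightarrow$ $\widehat{\pi}$ surjective'' is not justified by the cited source. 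Your alternative route via the universal property is the right reformulation --- it reduces the problem to producing a pair of ${}^*$-homomorphisms $\alpha_1\neq\alpha_2\colon C^u(\widehat{\GG})\to D$ with $\alpha_1\comp\widehat{\pi}=\alpha_2\comp\widehat{\pi}$ --- but you do not construct such a pair, and doing so from the bare hypothesis ``$\widehat{\pi}$ not surjective'' is precisely the non-trivial content of the lemma.

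The paper avoids this difficulty by staying in the \emph{unamalgamated} free product $C^u(\widehat{\KK'})=C^u(\widehat{\GG})\ast C^u(\widehat{\GG})$, where the concrete word description (as in \cite[\S4.7]{BrownOzawa}) \emph{is} available. It exhibits the $C^*$-subalgebra $\mathnormal{A}$ generated by $i_1(C^u(\widehat{\GG}))$ and $i_2(\widehat{\pi}(C^u(\widehat{\HH})))$, observes it is proper because $\widehat{\pi}$ is not surjective, and then produces a separating functional directly, never needing to understand the amalgamated quotient $C^u(\widehat{\KK})$ at all. That is the missing idea: work upstairs in the free product, where the combinatorics you want actually hold, rather than downstairs in the amalgam, where they do not.
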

\begin{proof}
 Consider the smallest $C^*$-subalgebra of $C^u(\widehat{\KK'})$ generated by $i_1(C^u(\widehat{\GG}))$ and $i_2(\widehat{\pi}(C^u(\widehat{\HH}))$ inside $C^u(\widehat{\KK'})$: let it be called $\mathnormal{A}$. It follows from the concrete description of the free product (see, e.g. \cite[\S4.7]{BrownOzawa}) that $\mathnormal{A}\neq C^u(\widehat{\KK'})$ (this of course needs \autoref{lem:notsurjective}). Let then $\omega\in C^u(\widehat{\KK'})^*$ be a non-zero functional such that $\omega\restriction_{\mathnormal{A}}=0$. Then in particular $\omega\restriction_I=0$. Let $y\in C^u(\widehat{\GG})\setminus\widehat{\pi}(C^u(\widehat{\HH}))$. Then \[\omega(i_1(y)-i_2(y))=-\omega(i_2(y))\]
 It is now clear that for a given $y$ as above one could have chosen $\omega$ such that $\omega(i_2(y))\neq0$ and $\omega\restriction_{\mathnormal{A}}=0$. Indeed, first pick non-zero $\tilde{\omega}\in C^u(\widehat{\GG})$ such that $\tilde{\omega}\restriction_{\widehat{\pi}(C^u(\widehat{\HH}))}=0$ and $\tilde{\omega}(y)\neq0$. Then $\omega=\tilde{\omega}\comp i_2\in C^u(\widehat{\KK'})^*$ is the required functional.
\end{proof}
As the final step of constructing $\KK=\GG\ast_{\HH}\GG$, one considers $q\colon C(\widehat{\KK'})\to C(\widehat{\KK})=\bigslant{C(\widehat{\KK'})}{I}$. Consider the morphisms $\varphi_j=\widehat{q\comp i_j}\colon c_0(\KK)\to c_0(\GG)$ for $j=1,2$. Then one has
\begin{proposition}\label{prop:homomorphismsconverse}
 The Hopf ${}^*$-ho\-mo\-mor\-phisms $\varphi_j$ satisfy $\beta\comp\varphi_1=\beta\comp\varphi_2$ and are distinct.
\end{proposition}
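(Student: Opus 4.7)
The plan is to handle the two claims of the proposition separately. For the first claim $\beta\comp\varphi_1=\beta\comp\varphi_2$, I would use the factorization $\beta=\tilde{\beta}\comp\pi$ (coming from $(\pi,\HH,\tilde{\beta})\in\mathcal{C}_\beta$) to reduce the question to the equality $\pi\comp\varphi_1=\pi\comp\varphi_2$, which can then be checked on the dual side. For the second claim $\varphi_1\neq\varphi_2$, the work has already been done in \autoref{lem:smallideal}; it suffices to transport the conclusion back through the duality.

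For the first part, I would pass to duals via \eqref{eq:dualhomo}. Applying \eqref{eq:dualhomo} twice gives contravariance of the dualization of Hopf ${}^*$-ho\-mo\-mor\-phisms, so that
\[\widehat{\pi\comp\varphi_j}=\widehat{\varphi_j}\comp\widehat{\pi}.\]
By the very definition $\varphi_j=\widehat{q\comp i_j}$, together with biduality $\widehat{\widehat{q\comp i_j}}=q\comp i_j$, I get $\widehat{\varphi_j}=q\comp i_j$, and therefore
\[\widehat{\pi\comp\varphi_j}=q\comp i_j\comp\widehat{\pi}.\]
But by the very construction of the ideal, $i_1(\widehat{\pi}(x))-i_2(\widehat{\pi}(x))\in I$ for every $x\in C^u(\widehat{\HH})$, which is to say $q\comp i_1\comp\widehat{\pi}=q\comp i_2\comp\widehat{\pi}$. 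Uniqueness of the dual in \eqref{eq:dualhomo} then forces $\pi\comp\varphi_1=\pi\comp\varphi_2$, so composition with $\tilde{\beta}$ yields $\beta\comp\varphi_1=\beta\comp\varphi_2$ as desired.

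For the distinctness, \autoref{lem:smallideal} supplies $y\in C^u(\widehat{\GG})$ with $i_1(y)-i_2(y)\notin I$, i.e.~$q(i_1(y))\neq q(i_2(y))$. This says that $q\comp i_1\neq q\comp i_2$ as Hopf ${}^*$-ho\-mo\-mor\-phisms $C^u(\widehat{\GG})\to C^u(\widehat{\KK})$; taking duals once more and using biduality one concludes $\varphi_1=\widehat{q\comp i_1}\neq\widehat{q\comp i_2}=\varphi_2$. The only mildly delicate point in the whole argument is the implicit claim that $I$ is a Hopf ideal, so that $C^u(\widehat{\KK})$ is genuinely a Hopf algebra and $q$ a Hopf morphism to which the duality machine applies — but this is a routine Sweedler-type verification exploiting that $i_1,i_2$ and $\widehat{\pi}$ are themselves Hopf morphisms, yielding
\[\Delta^{\widehat{\KK'}}\bigl(i_1(\widehat{\pi}(x))-i_2(\widehat{\pi}(x))\bigr)\in I\otimes C^u(\widehat{\KK'})+C^u(\widehat{\KK'})\otimes I\]
for all $x\in C^u(\widehat{\HH})$, and is standard in the theory of amalgamated free products of compact quantum groups following \cite{Wang95}.
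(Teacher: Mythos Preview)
Your proof is correct and follows essentially the same route as the paper's: reduce $\beta\comp\varphi_1=\beta\comp\varphi_2$ to $\pi\comp\varphi_1=\pi\comp\varphi_2$ via the factorization $\beta=\tilde{\beta}\comp\pi$, dualize using contravariance to reach $q\comp i_1\comp\widehat{\pi}=q\comp i_2\comp\widehat{\pi}$ (which holds by construction of $I$), and for distinctness invoke \autoref{lem:smallideal} to see $\widehat{\varphi}_1(y)\neq\widehat{\varphi}_2(y)$. Your closing remark about $I$ being a Hopf ideal is a point the paper handles in the setup preceding the proposition by appealing to \cite[Theorem 3.4 \& Corollary 3.5]{Wang95}, so no additional verification is needed here.
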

\begin{proof}
 Observe that it is enough to show that $\varphi_j$ coincide on $\mathbb{H}$, i.e.~
 \begin{equation}\label{eq:homoagree1}
\pi\comp\varphi_1=\pi\comp\varphi_2  
 \end{equation}because $\beta=\tilde{\beta}\comp\pi$. The equality \eqref{eq:homoagree1} is equivalent to the equality 
\begin{equation}\label{eq:homoagree2}
 \widehat{\pi\comp\varphi_1}=\widehat{\pi\comp\varphi_2}
 \end{equation}
But composition of morphisms satisfies $\widehat{\varphi\comp\phi}=\widehat{\phi}\comp\widehat{\varphi}$, hence \eqref{eq:homoagree2} is equivalent to
\begin{equation}\label{eq:homoagree3}
 q\comp i_1\comp\widehat{\pi}=\widehat{\varphi_1}\comp\widehat{\pi}=\widehat{\varphi_2}\comp\widehat{\pi}=q\comp i_2\comp\widehat{\pi}
 \end{equation}
 which holds in the quotient $C(\widehat{\KK})=\bigslant{C(\widehat{\KK'})}{I}$, as desired.
 
 Now using $y\in C^u(\widehat{\GG})$ from \autoref{lem:smallideal} we can see that $\varphi_1\neq\varphi_2$. Indeed, 
 \[\widehat{\varphi}_1(y)-\widehat{\varphi}_2(y)=q((i_1-i_2)(y))\neq0\]
 from the definition of $y$ and hence $\varphi_1\neq\varphi_2$.
\end{proof}
Let us summarize these ideas in the following
\begin{theorem}\label{thm:separationconverse}
Let $\GG$ be a discrete quantum group and let $\beta\in\Mor(c_0(\GG),\mathnormal{B})$ be a morphism. Then $\beta$ is generating if and only if for any quantum group $\KK$ and any pair of ho\-mo\-mor\-phisms $\GG\to\KK$ described by $\varphi,\tilde{\varphi}\in\Mor(c_0(\KK),c_0(\GG))$ one has $\beta\comp\varphi=\beta\comp\tilde{\varphi} \iff \varphi=\tilde{\varphi}$ .
\end{theorem}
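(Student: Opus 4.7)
The plan is to derive the statement by packaging together Theorem~\ref{thm:homomorphismseparation} and Proposition~\ref{prop:homomorphismsconverse}, both already proved above. The implication $\varphi=\tilde\varphi \Rightarrow \beta\comp\varphi=\beta\comp\tilde\varphi$ in the inner biconditional is free, so only the two converse directions carry content.

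For the forward direction of the outer $\iff$ (i.e. $\beta$ generating implies the separation property), the nontrivial implication $\beta\comp\varphi=\beta\comp\tilde{\varphi}\Rightarrow\varphi=\tilde{\varphi}$ is exactly the assertion of Theorem~\ref{thm:homomorphismseparation}; discreteness of $\GG$ is not needed there, so this half is immediate.

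For the backward direction (separation property implies $\beta$ generating), I would argue by contraposition. Assume $\beta$ is not generating, so its Hopf image $(\pi,\HH,\tilde\beta)$ satisfies $\HH\subsetneq\GG$. I would then apply Proposition~\ref{prop:homomorphismsconverse} verbatim: it produces the quantum group $\KK=\GG\ast_{\HH}\GG$ together with two Hopf ${}^*$-homomorphisms $\varphi_1,\varphi_2\in\Mor(c_0(\KK),c_0(\GG))$ such that $\beta\comp\varphi_1=\beta\comp\varphi_2$ but $\varphi_1\neq\varphi_2$, contradicting the separation hypothesis.

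The real obstacle has already been absorbed into Lemmas~\ref{lem:notsurjective}--\ref{lem:smallideal} and the proof of Proposition~\ref{prop:homomorphismsconverse}: one must check that the amalgamation ideal $I\subseteq C^u(\widehat{\KK'})$ is strictly smaller than the whole algebra, so that $C^u(\widehat{\KK})$ still distinguishes $\varphi_1$ from $\varphi_2$. Discreteness of $\GG$ enters only in guaranteeing, via the dual compact picture and \cite{Wang95}, that the amalgamated free product $\GG\ast_{\HH}\GG$ exists as a genuine locally compact quantum group; I would flag this as the one point where the hypothesis is essential, and would not attempt to relax it here.
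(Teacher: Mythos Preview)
Your proposal is correct and matches the paper's approach exactly: the theorem is stated as a summary (``Let us summarize these ideas in the following'') of Theorem~\ref{thm:homomorphismseparation} for the forward direction and Proposition~\ref{prop:homomorphismsconverse} for the converse via contraposition, with discreteness used precisely to invoke Wang's amalgamated free product.
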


\section{\texorpdfstring{$\beta$}{beta}-restriction and generating morphisms}\label{sec:betarestriction}
Let $\mathnormal{A}$ be a $C^{\ast}$-algebra and let $U\in\mult{\mathnormal{A}}{\Cnot{G}}$ be a representation of $\GG$ on $\mathnormal{A}$. Reasoning similarly as in the proof of \autoref{prop:MRWlike} (with certain $\ww$'s replaced by $U$), one can prove the following proposition:
\begin{proposition}\label{prop:betarestriction}
 Let $\theta\in \Mor(\Cnot{G},\Cnot{G}\otimes\mathnormal{B})$ be a morphism satisfying \[(\Delta_{\mathbb{G}}\otimes \id)\comp\theta=(\id\otimes\,\theta)\comp\Delta_{\mathbb{G}}.\] There exists a unique unitary element $Y\in\mult{\mathnormal{A}}{\mathnormal{B}}$ such that $(\id\otimes\,\theta)U=U_{12}Y_{13}$. 
\end{proposition}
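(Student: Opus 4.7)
The plan is to mimic the strategy used in the proof of \autoref{prop:MRWlike}, replacing occurrences of the Kac-Takesaki operator $\ww$ (viewed as the ``regular representation'') with the abstract representation $U$. The only essential property of $\ww$ used there is the pentagon equation, which for $U$ takes the form $\ww_{23}U_{12}\ww_{23}^{\ast}=U_{12}U_{13}$, a reformulation of the representation condition $(\id\otimes\Delta)U=U_{12}U_{13}$ via $\Delta(a)=\ww(a\otimes\mathds{1})\ww^{\ast}$.

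Concretely, I would define
\[
\tilde{Y}:=U_{12}^{\ast}\bigl((\id\otimes\,\theta)U\bigr)\in\mult{\mathnormal{A}}{\Cnot{G}}{\mathnormal{B}},
\]
and aim to show that its second leg is trivial, so that $\tilde{Y}=Y_{13}$ for a unique $Y\in\mult{\mathnormal{A}}{\mathnormal{B}}$. The key computation, performed in a four-fold multiplier algebra with slots labeled $\mathnormal{A}$, $\Cnot{G}$, $\Cnot{G}$, $\mathnormal{B}$, is the identity
\[
\ww_{23}\tilde{Y}_{124}\ww_{23}^{\ast}=\tilde{Y}_{134}.
\]
For the left factor $U_{12}^{\ast}$ of $\tilde{Y}$ one applies $\ww_{23}U_{12}^{\ast}\ww_{23}^{\ast}=U_{13}^{\ast}U_{12}^{\ast}$; for the right factor one uses the intertwining relation $(\Delta\otimes\id)\!\circ\!\theta=(\id\otimes\,\theta)\!\circ\!\Delta$ together with $(\id\otimes\Delta)U=U_{12}U_{13}$ to rewrite
\[
\ww_{23}\bigl((\id\otimes\,\theta)U\bigr)_{124}\ww_{23}^{\ast}=U_{12}\bigl((\id\otimes\,\theta)U\bigr)_{134}.
\]
Multiplying these two expressions the middle $U_{12}^{\ast}U_{12}$ collapses, leaving $U_{13}^{\ast}\bigl((\id\otimes\,\theta)U\bigr)_{134}=\tilde{Y}_{134}$, as required.

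Having established this equation, I would invoke \cite[Theorem 2.6]{MRW12} (exactly as in the proof of \autoref{prop:MRWlike}) to conclude that $\tilde{Y}$ has trivial second leg, i.e.~there exists $Y\in\mult{\mathnormal{A}}{\mathnormal{B}}$ with $\tilde{Y}=Y_{13}$. The equality $(\id\otimes\,\theta)U=U_{12}Y_{13}$ is then immediate from the definition of $\tilde{Y}$, and $Y$ is unitary as a product of two unitary elements. Uniqueness is automatic: if $U_{12}Y_{13}=U_{12}Y'_{13}$ then $Y_{13}=Y'_{13}$ since $U$ is unitary, and hence $Y=Y'$.

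The only substantive step is verifying the commutation identity for $\tilde{Y}$; once this is in hand the application of \cite[Theorem 2.6]{MRW12} is routine. The main conceptual point is that neither $\ww$-specific arithmetic nor the coaction property of $\theta$ on the full $\Cnot{G}$ are needed beyond what is already packaged in the hypothesis $(\Delta\otimes\id)\!\circ\!\theta=(\id\otimes\,\theta)\!\circ\!\Delta$ and in $U$ being a representation.
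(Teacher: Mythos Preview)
Your proposal is correct and follows precisely the approach the paper indicates: the paper does not give a standalone proof but states that one should reason ``similarly as in the proof of \autoref{prop:MRWlike} (with certain $\ww$'s replaced by $U$)'', and you have carried out exactly this substitution, defining $\tilde{Y}=U_{12}^{\ast}\bigl((\id\otimes\theta)U\bigr)$, verifying $\ww_{23}\tilde{Y}_{124}\ww_{23}^{\ast}=\tilde{Y}_{134}$ via the representation equation for $U$ and the intertwining hypothesis on $\theta$, and then invoking \cite[Theorem~2.6]{MRW12}. The computation you sketch is correct and matches the template of \autoref{prop:MRWlike} line by line.
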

\begin{df}\label{def:betarestriction}
Let $\beta\in \Mor(\Cunot{G},\mathnormal{B})$ be a morphism, $\theta\in \Mor(\Cnot{G},\Cnot{G}\otimes\mathnormal{B})$ be the morphism assigned to $\beta$ via \eqref{eq:partialcoaction1}, let $U\in\mult{\mathnormal{A}}{\Cnot{G}}$ be a representation of $\GG$ in $\mathnormal{A}$. Then $Y\in\mult{\mathnormal{A}}{\mathnormal{B}}$ obtained by \autoref{prop:betarestriction} will be denoted $U^{\beta}$ and called the \emph{$\beta$-restriction} of $U$.
\end{df}
\begin{remark}\label{rmk:Wbeta} Observe that \eqref{eq:partialcoaction2} may be interpreted as $\ww^{\beta}=X=(\id\otimes\beta)\wW$. 
\end{remark}

\begin{theorem}[{\autoref{thmintro:betarestriction}}]\label{thm:characterisation}
With the notation as above
\begin{enumerate}[label={\normalfont(\roman*)}]
  \item \label{it:char1} The morphism $\beta$ is generating.
  \item \label{it:char2} $\{(\id\otimes\beta_*\omega)\wW \mid \omega\in \mathnormal{B}^*\}''=\Linfhat{G}$.
  \item \label{it:char3} $\{x\in L^{\infty}(\mathbb{G}) \mid \theta(x)=x\otimes\mathds{1}\}=\mathbb{C}\mathds{1}$.
  \item \label{it:char4} $\beta$-restriction is an injective assignment: $\beta$-restrictions of any two distinct representations of $\GG$ are distinct.
   \end{enumerate}
 We have \ref{it:char1}$\isimplied$\ref{it:char2}$\iff$\ref{it:char3}$\iff$\ref{it:char4}. Moreover, \ref{it:char1}$\implies$\ref{it:char2} provided that $\GG$ is compact or discrete or $\tau^u_t(\ker(\beta))\subseteq\ker(\beta)$ for all $t\in\mathbb{R}$ (in particular, if $\GG$ is of Kac type).
\end{theorem}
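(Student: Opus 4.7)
The proof is organized around statement \ref{it:char2} as the pivotal condition. The easy implications involve \ref{it:char1}: the inclusion $\mathsf{M}_1\subseteq\mathsf{M}_{BV}\subseteq\Linfhat{G}$ from \autoref{sec:hopfimageconstruction} makes \ref{it:char2}$\Rightarrow$\ref{it:char1} automatic, and \autoref{thm:BVclosure} upgrades this to $\mathsf{M}_1=\mathsf{M}_{BV}$ under the listed $\tau$-invariance hypotheses, giving the converse. It remains to establish \ref{it:char2}$\Leftrightarrow$\ref{it:char3}$\Leftrightarrow$\ref{it:char4}.

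The first step is translating \ref{it:char3} into a commutant statement. Since $\theta(x)=X(x\otimes\mathds{1})X^*$, the condition $\theta(x)=x\otimes\mathds{1}$ is equivalent to $X(x\otimes\mathds{1})=(x\otimes\mathds{1})X$; slicing by $\omega\in\B{H}_*$ and invoking \autoref{lem:descriptionofM} identifies the $\theta$-fixed-point space as $\Linf{G}\cap\mathsf{M}_1'$. In particular, \ref{it:char2}$\Rightarrow$\ref{it:char3} reduces to the standard LCQG fact $\Linf{G}\cap\Linfhat{G}'=\mathbb{C}\mathds{1}$, itself the case $\beta=\id_{\Cunot{G}}$ of the same computation (there $\theta$ becomes the coproduct $\Delta$, whose fixed-point algebra is trivial by left invariance of the Haar weight).

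The equivalence \ref{it:char3}$\Leftrightarrow$\ref{it:char4} is established by two direct constructions. For \ref{it:char3}$\Rightarrow$\ref{it:char4}, given representations $U,\tilde{U}$ with $U^\beta=\tilde{U}^\beta$, setting $Z:=U\tilde{U}^*$ and using \autoref{def:betarestriction} gives $(\id\otimes\theta)(Z)=Z\otimes\mathds{1}$; slicing the first leg produces $\theta$-fixed elements of $\Linf{G}$, all scalar by \ref{it:char3}, so $Z=a\otimes\mathds{1}$ for a unitary $a\in\B{H}{U}$, and the representation equation $(\id\otimes\Delta)U=U_{12}U_{13}$ applied to $U=(a\otimes\mathds{1})\tilde{U}$ forces $a=\mathds{1}$. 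For the contrapositive \ref{it:char4}$\Rightarrow$\ref{it:char3}, given a non-scalar unitary $x\in\Linf{G}\cap\mathsf{M}_1'$ (which exists by functional calculus whenever the fixed-point space exceeds $\mathbb{C}\mathds{1}$), put $U_1:=\ww$ and $U_2:=(M_x\otimes\mathds{1})\ww(M_x^{-1}\otimes\mathds{1})$, where $M_x$ is left multiplication by $x$ on $L^2(\GG)$; inner conjugation preserves the representation equation, so $U_2$ is a representation, distinct from $U_1$ because $M_x$ fails to commute with $\Linfhat{G}$. A direct computation using \eqref{eq:partialcoaction2} together with the commutation $[M_x\otimes\mathds{1},X]=0$ (a consequence of $X\in\mathsf{M}_1\vnt\mathsf{B}$ and $x\in\mathsf{M}_1'$) yields $U_2^\beta=X=U_1^\beta$, contradicting \ref{it:char4}.

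The remaining and principal difficulty is \ref{it:char3}$\Rightarrow$\ref{it:char2}: absent the $\tau$-invariance hypothesis, $\mathsf{M}_1$ may be strictly smaller than $\mathsf{M}_{BV}$, so the direct argument passing through the Baaj--Vaes subgroup $\HH\subset\GG$ (which would identify $\Linf{G}\cap\Linfhat{H}'$ with $L^\infty(\mathbb{G}/\mathbb{H})$ and force $\HH=\GG$) is unavailable. I would invoke a Galois-type correspondence: the $\widehat{\Delta}$-invariance of $\mathsf{M}_1$ (\autoref{prop:Misinvariant}), combined with injectivity of the assignment $\mathsf{M}\mapsto\mathsf{M}'\cap\Linf{G}$ on $\widehat{\Delta}$-invariant von Neumann subalgebras of $\Linfhat{G}$, yields $\mathsf{M}_1=\Linfhat{G}$ whenever $\mathsf{M}_1'\cap\Linf{G}=\mathbb{C}\mathds{1}$. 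The spatial implementation of $\tau_t$ and $\widehat{\tau}_t$ by a common unitary group $P^{it}$ on $L^2(\GG)$ is likely instrumental in establishing this correspondence in the full LCQG generality.
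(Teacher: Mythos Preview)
Your treatment of \ref{it:char2}$\Rightarrow$\ref{it:char1}, of \ref{it:char1}$\Rightarrow$\ref{it:char2} under the extra hypotheses, of \ref{it:char2}$\Rightarrow$\ref{it:char3}, and of \ref{it:char3}$\Rightarrow$\ref{it:char4} is correct and matches the paper (your direct commutant argument for \ref{it:char2}$\Rightarrow$\ref{it:char3} is a perfectly good substitute for the paper's appeal to co-duality). The problem lies in how you close the cycle of equivalences.

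You prove \ref{it:char4}$\Rightarrow$\ref{it:char3} and then treat \ref{it:char3}$\Rightarrow$\ref{it:char2} as ``the remaining and principal difficulty'', invoking an injectivity of the assignment $\mathsf{M}\mapsto\mathsf{M}'\cap\Linf{G}$ on $\widehat{\Delta}$-invariant subalgebras that you do not prove and about which you are explicitly tentative. That is a genuine gap: this Galois-type statement is not established anywhere in the paper and is not needed. The paper avoids the issue entirely by proving \ref{it:char4}$\Rightarrow$\ref{it:char2} directly, closing the loop \ref{it:char2}$\Rightarrow$\ref{it:char3}$\Rightarrow$\ref{it:char4}$\Rightarrow$\ref{it:char2}.

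The irony is that your own contrapositive construction already does this, once you drop an unnecessary restriction. You choose the conjugating unitary from $\Linf{G}\cap\mathsf{M}_1'$, which is what you need if you start from $\neg$\ref{it:char3}. But if instead you start from $\neg$\ref{it:char2}, i.e.\ $\mathsf{M}_1\subsetneq\Linfhat{G}$, then $\Linfhat{G}'\subsetneq\mathsf{M}_1'$ and you may pick any unitary $u\in\mathsf{M}_1'\setminus\Linfhat{G}'$ (not required to lie in $\Linf{G}$). Setting $U=(u\otimes\mathds{1})\ww(u^*\otimes\mathds{1})$, your computation goes through verbatim: $U$ is a representation, $U\neq\ww$ since $u\notin\Linfhat{G}'$, and $U^\beta=X=\ww^\beta$ because $u\in\mathsf{M}_1'$ commutes with the first leg of $X$. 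This is exactly the paper's argument, and it makes your appeal to a Galois correspondence superfluous.
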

 \begin{proof}
 \ref{it:char2}$\implies$\ref{it:char1}. This is obvious in view of \autoref{thm:BVclosure}.
 
 \ref{it:char2}$\implies$\ref{it:char3}. This follows by applying co-duality to both sides of the equation in \ref{it:char2}, see \cite[Section 3]{KS14a}. An elementary proof is given in \cite[Theorem 3.15]{PJphd}.
 
 \ref{it:char3}$\implies$\ref{it:char4}. Let $U,V\in\mult{\K{H}}{\Cnot{G}}$ be two representations of $\GG$ in the same Hilbert space $\mathcal{H}$. Assume that $U^{\beta}=V^{\beta}\in\mult{\K{H}}{\mathnormal{B}}$. We have:
 \[(\id\otimes\,\theta)(UV^*)=U_{12}U_{13}^{\beta}(V_{13}^{\beta})^*V_{12}^*=U_{12}V_{12}^*.\]
 Thus condition \ref{it:char2} ensures us that there exists a unitary element $u\in\B{H}$ such that $U=(u\otimes\mathds{1})V$. Applying $(\id\otimes\Delta)$ to both sides of this equality we get that $u=\mathds{1}$ as in the last step of the proof of \autoref{prop:MRWlike}.
 
 \ref{it:char4}$\implies$\ref{it:char2}. Assume that \ref{it:char2} does not hold, i.e.~we have that $\mathsf{M}_1\subsetneq\Linfhat{G}$. Then we have $\Linfhat{G}'\subsetneq\mathsf{M}_1'\subset\BLtwo$, so pick a unitary $u\in\mathsf{M}_1'\setminus\Linfhat{G}'$ (it exists, as von Neumann algebras are spanned by their unitary elements). Consider $U=(u\otimes\mathds{1})\ww(u^{\ast}\otimes\mathds{1})$ (it is obvious that $U\in \Rep(\mathbb{G})$). From the definition of $u$ it is clear that $U\neq\ww$. But on the other hand we have that 
  \[\begin{split}
   U^{\beta}_{13}=&U_{12}^{\ast}((\id\otimes\,\theta)U)=\\
   =&U_{12}^{\ast}\bigg((\id\otimes\,\theta)\bigg((u\otimes\mathds{1})\ww(u^{\ast}\otimes\mathds{1})\bigg)\bigg)=\\
   =&(u\otimes\mathds{1}\otimes\mathds{1})\ww_{12}^{\ast}(u^{\ast}\otimes\mathds{1}\otimes\mathds{1})(u\otimes\mathds{1}\otimes\mathds{1})\bigg((\id\otimes\,\theta)\ww\bigg)(u^{\ast}\otimes\mathds{1}\otimes\mathds{1})=\\
   =&(u\otimes\mathds{1}\otimes\mathds{1})\ww^{\beta}_{13}(u^{\ast}\otimes\mathds{1}\otimes\mathds{1})=\\
   =&(u\otimes\mathds{1}\otimes\mathds{1})X_{13}(u^{\ast}\otimes\mathds{1}\otimes\mathds{1})=X=\ww^{\beta}_{13}
  \end{split}\]
  Where the equalities in the last line follow from the fact that $\ww^{\beta}=X$ (cf. \autoref{rmk:Wbeta}) and the fact that $X\in\mathsf{M}_1\vnt\B{H}$ (for a fixed non-degenerate representation $\pi\in\Mor(\mathnormal{B},\K{H})$) and hence the first leg of $X$ commutes with $u$.
  
  \ref{it:char1}$\implies$\ref{it:char2} under additional assumptions. This was discussed in \autoref{prop:Misinvariant} and \autoref{thm:BVclosure} and boils down to $\mathsf{M}_1$ being automatically $\tau$-invariant.
 \end{proof}
 \begin{remark}
  In fact \autoref{thm:homomorphismseparation} can be deduced from \autoref{thm:characterisation} in the case of compact, discrete and Kac type quantum groups: it relies on the implication \ref{it:char4}$\implies$\ref{it:char3} for representations coming from bicharacters describing ho\-mo\-mor\-phisms. Its converse, \autoref{thm:separationconverse}, valid for discrete quantum groups, for which \ref{it:char4}$\implies$\ref{it:char3} holds automatically, is stronger than just this implication: one can detect injectivity of the $\beta$-restriction map not only in the class of all representations, but also in the class of bicharacters coming from ho\-mo\-mor\-phisms.
 \end{remark}
\section{Promotion of intertwiners}\label{sec:intertwiners}
Let $U\in\mult{\K{H}{U}}{\Cnot{G}}\subseteq\B{H}{U}\vnt\Linf{G}$ be a representation of $\GG$. Let us call $\mathnormal{B}$ the $C^*$-subalgebra of $\B{H}{U}$ generated by slices of $U$, then $U\in\mult{\mathnormal{B}}{\Cnot{G}}\subseteq \mathsf{B}\vnt\Linf{G}$, where the bicommutant $\mathsf{B}=\mathnormal{B}''$ is taken inside $\B{H}{U}$. Let $\varphi\in\Mor(\Cunothat{G},\mathnormal{B})$ be the unique morphism such that $U=(\varphi\otimes\id)\Ww$ (given by \autoref{thm:kustermans}).

We would like to interpret $X=\flip(U)^*\in\mult{\Cnot{G}}{\mathnormal{B}}$ (which is now an antirepresentation) as a quantum subset $\mathbb{X}\subset\widehat{\mathbb{G}}$. Let then $\theta\colon\Linfhat{G}\to\Linfhat{G}\vnt\mathsf{B}$ be the map given by \eqref{eq:partialcoaction1}. 

Also, for any representation $\tilde{U}\in\mult{\mathnormal{A}}{\Cnothat{G}}$ of $\widehat{\GG}$ there exists a unique $\tilde{U}^{\varphi}\in\mult{\mathnormal{A}}{\mathnormal{B}}$ -- the restriction of the family of unitaries $\tilde{U}$ to the quantum subset $\mathbb{X}\subset\widehat{\mathbb{G}}$ (given by \autoref{prop:betarestriction}). The precise formula is \begin{equation}\label{eq:restriction0}
\tilde{U}^{\varphi}_{13}=\tilde{U}_{12}^*(\id\otimes\,\theta)(\tilde{U}).\end{equation}

Let us use the following notation: whenever $\pi\in\Mor(\Cunot{G},\Cunot{H})$ is a Hopf ${}^*$-ho\-mo\-mor\-phism, we denote by $\pi^r=\Lambda_{\mathbb{H}}\comp\pi\in\Mor(\Cunot{G},\Cnot{H})$ the reduction of this morphism. We begin with computing the restriction of $U\in\mult{\mathnormal{B}}{\Cnot{G}}$ to the subset $\HH\subset\GG$, where $\HH$ is a closed quantum subgroup (via $\pi\colon\Cunot{G}\to\Cunot{H}$, normal embedding $\gamma\colon\Linfhat{H}\to\Linfhat{G}$ and with the aid of the bicharacter $V\in\mult{\Cnothat{H}}{\Cnot{G}}$). 

\begin{align}\label{eq:Urestrict}
\begin{split}
U^{\pi^r}_{13}&=U_{12}^*V_{23}U_{12}^*V_{23}^*\\
&=(\varphi\otimes\id\otimes\Lambda\comp\pi)(\Ww^*_{12}\wW_{23}\Ww_{12}\wW_{23}^*)\\
 &=\bigl((\varphi\otimes\Lambda\comp\pi)\WW\bigr)_{13}=\bigl((\varphi\comp\hat{\pi}\otimes\id)\Ww^{\HH}\bigr)_{13}
 \end{split}
\end{align}
In the first equality we used \eqref{eq:restriction0} and \eqref{eq:universalpentagonal} in the third equality. In fact the above computation works also for Woronowicz-closed quantum subgroups. For later use, let us compute $(\theta\otimes\id)V$. 
\begin{align}\label{eq:thetaonV}
 \begin{split}
(\theta\otimes\id)V=X_{12}V_{13}X_{12}^*&=(\flip\otimes\id)(U_{12}^*V_{23}U_{12})
\\ &=(\flip\otimes\id)\bigl((\varphi\otimes\id\otimes\Lambda\comp\pi)\Ww_{12}^*\wW_{23}\Ww_{12}\bigr)
\\ &=(\flip\otimes\id)\bigl((\varphi\otimes\id\otimes\Lambda\comp\pi)(\WW_{13}\wW_{23}\bigr)
\\ &=\bigl((\varphi\otimes\Lambda\comp\pi)\WW\bigr)_{23}V_{13}=U^{\pi^r}_{23}V_{13}
 \end{split}
\end{align}
In particular if $\mathbb{H}=\mathbb{G}$ and $V=\ww$, then \eqref{eq:thetaonV} simply says that $(\theta\otimes\id)\ww=U_{23}\ww_{13}$ (cf. \eqref{eq:partialcoaction2}). Let us denote by $\tilde{\theta}$ the map given by \eqref{eq:partialcoaction1}, associated to the $U^{\pi^r}$, a representation of $\HH$.

\begin{lemma}\label{lem:restrictcoincide}
The maps $(\gamma\otimes\id)\comp\tilde{\theta}$ and $\theta\comp\gamma$ coincide on $\Linfhat{H}$.
\end{lemma}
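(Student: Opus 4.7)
The plan is to exploit normality: both $\theta\comp\gamma$ and $(\gamma\otimes\id)\comp\tilde{\theta}$ are normal ${}^{\ast}$-homomorphisms $\Linfhat{H}\to\Linfhat{G}\vnt\mathsf{B}$, so it suffices to verify they agree on the $\sigma$-weakly dense subspace of slices $\{(\id\otimes\omega)\ww^{\HH}\colon\omega\in\Lone{H}\}$. Equivalently, one checks the equality of $((\theta\comp\gamma)\otimes\id)\ww^{\HH}$ and $(\gamma\otimes\id\otimes\id)(\tilde{\theta}\otimes\id)\ww^{\HH}$ inside $\Linfhat{G}\vnt\mathsf{B}\vnt\Linf{H}$.

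For the first expression I use $V=(\gamma\otimes\id)\ww^{\HH}$ together with \eqref{eq:thetaonV} to obtain
\[((\theta\comp\gamma)\otimes\id)\ww^{\HH}=(\theta\otimes\id)V=U^{\pi^r}_{23}V_{13}.\]

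For the second expression, my plan is to first establish the auxiliary identity
\[(\tilde{\theta}\otimes\id)\ww^{\HH}=U^{\pi^r}_{23}\ww^{\HH}_{13},\]
which is nothing but \eqref{eq:thetaonV} applied to $\tilde{\theta}$ and the canonical bicharacter $\ww^{\HH}$ of the trivial embedding $\HH\hookrightarrow\HH$ (under which $U^{\pi^r}$ restricts to itself). A direct check amounts to writing $(\tilde{\theta}\otimes\id)\ww^{\HH}=\tilde{X}_{12}\ww^{\HH}_{13}\tilde{X}_{12}^*$ with $\tilde{X}=\flip(U^{\pi^r})^*$ and combining the representation identity $(\id\otimes\Delta_{\HH})U^{\pi^r}=U^{\pi^r}_{12}U^{\pi^r}_{13}$ with $\Delta_{\HH}(x)=\ww^{\HH}(x\otimes\mathds{1})(\ww^{\HH})^*$. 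Applying $(\gamma\otimes\id\otimes\id)$ then leaves $U^{\pi^r}_{23}$ untouched (its first leg is $\mathds{1}$) while sending $\ww^{\HH}_{13}$ to $V_{13}$, yielding
\[(\gamma\otimes\id\otimes\id)(\tilde{\theta}\otimes\id)\ww^{\HH}=U^{\pi^r}_{23}V_{13},\]
which matches the first computation. Normality of $\theta$, $\tilde{\theta}$ and $\gamma$ then propagates the equality from the generating slices to all of $\Linfhat{H}$.

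The only nontrivial step is the auxiliary identity; the rest is bookkeeping with leg notation. Even this identity is not conceptually new: it is precisely the derivation behind \eqref{eq:thetaonV} repeated in the special case where both the ambient group and the subgroup equal $\HH$.
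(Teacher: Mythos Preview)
Your proposal is correct and follows essentially the same route as the paper: reduce to checking the identity on $\ww^{\HH}$ by normality/density of slices, compute $((\theta\comp\gamma)\otimes\id)\ww^{\HH}=U^{\pi^r}_{23}V_{13}$ via \eqref{eq:thetaonV}, establish the auxiliary identity $(\tilde{\theta}\otimes\id)\ww^{\HH}=U^{\pi^r}_{23}\ww^{\HH}_{13}$, apply $\gamma$ on the first leg, and compare. The only minor difference is that for the auxiliary identity the paper redoes the computation from scratch using $U^{\pi^r}=(\varphi\comp\widehat{\pi}\otimes\id)\Ww^{\HH}$ and the pentagonal relation \eqref{eq:universalpentagonal}, whereas you observe (correctly) that it is simply the instance of \eqref{eq:thetaonV} with $\GG$ replaced by $\HH$, $U$ by $U^{\pi^r}$, and the subgroup taken to be $\HH$ itself; this is a slightly cleaner way to package the same calculation.
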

\begin{proof}
Using \eqref{eq:thetaonV} we have
\begin{equation}\label{eq:thetaonV2}
\bigl((\theta\comp\gamma)\otimes\id\bigr)(\ww^{\HH})=(\theta\otimes\id)V=U^{\pi^r}_{23}V_{13}
 \end{equation}
On the other hand using \eqref{eq:Urestrict} we obtain:
\begin{equation}\label{eq:tildethetaonW}
\begin{split}
(\flip\otimes\id)\comp(\tilde{\theta}\otimes\id)(\ww^{\HH})&=(U^{\pi^r})_{12}^*\ww_{23}^{\HH}U^{\pi^r}_{12}\\
&=(\varphi\comp\hat{\pi}\otimes\id\otimes\id)\bigl((\Ww^{\HH}_{12})^*\ww^{\HH}_{23}\Ww^{\HH}_{12}\bigr)\\
&=(\varphi\comp\hat{\pi}\otimes\id\otimes\id)\bigl((\Ww^{\HH}_{13})\ww^{\HH}_{23}\bigr)=U^{\pi^r}_{13}\ww^{\HH}_{23},
\end{split}
\end{equation}
where in the third equality we used \eqref{eq:universalpentagonal} (after reducing the third leg) . Application of $(\flip\otimes\id)$ to both sides of \eqref{eq:tildethetaonW} yields:
\begin{equation}\label{eq:tildethetaonW2}
(\tilde{\theta}\otimes\id)(\ww^{\HH})=U^{\pi^r}_{23}\ww^{\HH}_{13},
\end{equation}
Applying $(\gamma\otimes\id)$ to both sides of \eqref{eq:tildethetaonW2} and comparing it to \eqref{eq:thetaonV2} we obtain:
\begin{equation}
\bigl((\theta\comp\gamma)\otimes\id\bigr)(\ww^{\HH})=(\gamma\otimes\id)\comp(\tilde{\theta}\otimes\id)(\ww^{\HH})
\end{equation}
and the conclusion follows from WOT-density of slices of $\ww^{\HH}$ in $\Linfhat{H}$.
\end{proof}

\begin{theorem}\label{thm:promtheta}
 Let $\HH_1,\HH_2\subset\GG$ be closed quantum subgroups of a locally compact quantum group $\GG$ identified via $\pi_i\in\Mor(\Cunot{G},\Cunot{H}{i})$. Then the following conditions are equivalent:
\begin{enumerate}[label={\normalfont(\roman*)}]
 \item\label{it:promtheta1} $\GG=\overline{\langle \HH_1,\HH_2\rangle}$ (in the sense of \autoref{sec:twosubgroups});
 \item\label{it:promtheta2} for all representations $U\in\B{H}{U}\vnt\Linf{G}$ of $\GG$ we have that \begin{equation}\label{eq:promtheta2}\{(\id\otimes\omega)(U):\omega\in \Lone{G}\}''=\{(\id\otimes\omega_1\otimes\omega_2)(U^{\pi_1^r}_{12}U^{\pi^r_2}_{23}):\omega_i\in\Lone{H}{i}\}'';\end{equation}
 \item\label{it:promtheta3} for the right regular representation $\ww\in\BLtwo\vnt\Linf{G}$ we have that \begin{equation}\label{eq:promtheta3}\{(\id\otimes\omega)(\ww):\omega\in \Lone{G}\}''=\{(\id\otimes\omega_1\otimes\omega_2)(\ww^{\pi^r_1}_{12}\ww^{\pi^r_2}_{23}):\omega_i\in \Lone{H}{i}\}''.\end{equation}
\end{enumerate}\end{theorem}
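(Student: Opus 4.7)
I plan to prove the cycle (ii)$\Rightarrow$(iii)$\Rightarrow$(i)$\Rightarrow$(ii). The first implication is immediate by specializing to $U=\ww$. The linchpin for the remaining two is the identification $\ww^{\pi_i^r}=V^{\HH_i}$: applying \autoref{thm:kustermans} to $U=\ww$ gives $\phi_{\ww}=\Lambda_{\widehat{\GG}}$ (from $\ww=(\Lambda_{\widehat{\GG}}\otimes\id)\Ww$ together with uniqueness), and substituting into \eqref{eq:Urestrict} yields $\ww^{\pi_i^r}=(\Lambda_{\widehat{\GG}}\comp\widehat{\pi_i}\otimes\id)\Ww^{\HH_i}$. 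Rewriting $\Lambda_{\widehat{\GG}}\comp\widehat{\pi_i}=\gamma_i\comp\Lambda_{\widehat{\HH}_i}$ via the dual version of \eqref{eq:reduceddualhomo1} collapses this to $(\gamma_i\otimes\id)\ww^{\HH_i}=V^{\HH_i}$.

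For (iii)$\Rightarrow$(i), I replace each $\ww^{\pi_i^r}$ in \eqref{eq:promtheta3} by $V^{\HH_i}$ and carry out a leg-tracking computation analogous to the proof of $\mathsf{M}_{\bullet}=\mathsf{M}_{1,2}$ in \autoref{prop:twosubgroups} to identify the right-hand side with $\mathsf{M}_{1,2}$. The left-hand side equals $\Linfhat{G}$ since slices of $\ww$ are weak$^*$-dense in $\Linfhat{G}$, so (iii) amounts to $\mathsf{M}_{1,2}=\Linfhat{G}$. Because $\gamma_i(\Linfhat{H}{i})$ are Baaj-Vaes subalgebras, the von Neumann algebra they jointly generate is $\tau$- and $R$-invariant, hence itself Baaj-Vaes. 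Therefore $\mathsf{M}_{1,2}=\Linfhat{G}$ is equivalent to the Baaj-Vaes closure of $\mathsf{M}_{1,2}$ being $\Linfhat{G}$, which by \autoref{def:twosubgroupsgenerateJKS} is the statement $\overline{\langle\HH_1,\HH_2\rangle}=\GG$.

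For (i)$\Rightarrow$(ii), I fix an arbitrary representation $U$ and invoke \autoref{thm:kustermans} to write $U=(\phi_U\otimes\id)\Ww$ for a unique $\phi_U\in\Mor(\Cunothat{G},\K{H}{U})$. Then \eqref{eq:Urestrict} yields $U^{\pi_i^r}=(\phi_U\comp\widehat{\pi_i}\otimes\id)\Ww^{\HH_i}$. After extending $\phi_U$ normally to the weak$^*$-closure of its image, the left-hand side of \eqref{eq:promtheta2} becomes $\phi_U(\Linfhat{G})''$, while the right-hand side, by the same leg-tracking argument as above, equals $\phi_U(\mathsf{M}_{1,2})''$. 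Hypothesis (i), combined with the equality $\mathsf{M}_{1,2}=\Linfhat{G}$ recorded in the previous step, closes the argument.

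The main obstacle is parsing the expression $\ww^{\pi_1^r}_{12}\ww^{\pi_2^r}_{23}$: the position-$2$ slot must simultaneously accommodate the second leg of $\ww^{\pi_1^r}$ (living in $\Linf{H}{1}$) and the first leg of $\ww^{\pi_2^r}$ (living in $\Linfhat{G}$), which forces one to work in a suitably enlarged ambient algebra. Once this is fixed, the computational task is to verify that the slices $(\id\otimes\omega_1\otimes\omega_2)(V^{\HH_1}_{12}V^{\HH_2}_{23})$ yield, modulo weak$^*$-closure, the same family of elements of $\Linfhat{G}$ as the slices $(\id\otimes\omega_1\otimes\omega_2)(V^{\HH_1}_{12}V^{\HH_2}_{13})$ used in \autoref{prop:twosubgroups}---namely all products $\gamma_1(x_1)\gamma_2(x_2)$ with $x_i\in\Linfhat{H}{i}$.
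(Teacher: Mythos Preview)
Your implications (ii)$\Rightarrow$(iii) and (iii)$\Rightarrow$(i) are fine and match the paper's argument; the identification $\ww^{\pi_i^r}=V^{\HH_i}$ via \eqref{eq:Urestrict} and \eqref{eq:reduceddualhomo1} is exactly right, and the reduction to \autoref{prop:twosubgroups} is legitimate.

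The gap is in (i)$\Rightarrow$(ii). The morphism $\phi_U$ produced by \autoref{thm:kustermans} lives on $\Cunothat{G}$, not on $\Linfhat{G}$, and there is \emph{no} normal extension to $\Linfhat{G}$ in general. Such an extension would exist precisely when the representation $U$ is weakly contained in the regular representation; for non-coamenable $\GG$ there are plenty of $U$ for which this fails. Consequently the expression ``$\phi_U(\mathsf{M}_{1,2})$'' does not make sense: $\mathsf{M}_{1,2}$ is a von Neumann subalgebra of $\Linfhat{G}$, not of $\Cunothat{G}$. Even if you retreat to the universal level, condition (i) only asserts that $\gamma_1(\Linfhat{H}{1})\cup\gamma_2(\Linfhat{H}{2})$ generates $\Linfhat{G}$ as a von Neumann algebra---it does \emph{not} tell you that $\widehat{\pi}_1(\Cunothat{H}{1})\cup\widehat{\pi}_2(\Cunothat{H}{2})$ generates $\Cunothat{G}$ as a $C^*$-algebra, which is what you would need to push the equality through a merely continuous $\phi_U$. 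A classical illustration: with $G=\mathbb{R}$, $H_1=\mathbb{Z}$, $H_2=\alpha\mathbb{Z}$ for $\alpha$ irrational, condition (i) holds, yet on the universal side the images of $C^*(\mathbb{Z})$ and $C^*(\alpha\mathbb{Z})$ inside $M(C^*(\mathbb{R}))\cong C_b(\mathbb{R})$ consist of periodic functions and do not generate $C_0(\mathbb{R})$ in norm.

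The paper circumvents this by working instead with the map $\theta\colon\Linfhat{G}\to\Linfhat{G}\vnt\mathsf{B}$ attached to $U$ (viewed as a quantum subset $\mathbb{X}\subset\widehat{\GG}$). Since $\theta$ is conjugation by the unitary $X=\flip(U)^*$, it is manifestly normal, so one can legitimately apply it to the von Neumann algebra equality $\Linfhat{G}=\mathsf{M}_{V,1,2}$ from \autoref{prop:twosubgroups}. After using the identity \eqref{eq:thetaonV} to compute $(\theta\otimes\id)V^{\HH_i}$ and then slicing off the redundant $\Linfhat{G}$-leg with functionals $\eta\in\Lonehat{G}$, one arrives at \eqref{eq:promtheta2}. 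The essential point is that normality is built into $\theta$ from the outset, whereas your $\phi_U$ does not have it.
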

\begin{proof}

\ref{it:promtheta1}$\implies$\ref{it:promtheta2}. Assume that $\GG=\overline{\langle\HH_1,\HH_2\rangle}$, where the embedding $\HH_i\subset\GG$ is described by means of a Hopf ${}^*$-homomorphism $\pi_i\in\Mor(\Cunot{G},\Cunot{H}{i})$ and a bicharacter $V^{\HH_i}\in\Linfhat{G}\vnt\Linf{H}{i}$. From \autoref{prop:twosubgroups} this is to say that
 \begin{equation}\label{eq:generationdef}\Linfhat{G}=\{(\id\otimes\omega)\ww:\omega\in \Lone{G}\}''=\{(\id\otimes\omega_1\otimes\omega_2)V^{\HH_1}_{12}V^{\HH_2}_{13}:\omega_i\in \Lone{H}{i}\}''                                                                                                                                                                                                                                                                                                                      \end{equation}

 Let us fix a representation $U\in\mathsf{B}\vnt\Linf{G}$ of $\GG$ interpret it as a quantum subset $\mathbb{X}\subset\widehat{\mathbb{G}}$ as in the introduction. Let then $\theta\colon\Linfhat{G}\to\Linfhat{G}\vnt\mathsf{B}$ be the corresponding morphism. Let us apply the map $\theta$ to middle and right hand side of \eqref{eq:generationdef}. The right hand side is then 
 \begin{align}\label{eq:RHS1}\begin{split}
\{(\theta\otimes\omega_1\otimes\omega_2)(V^{\mathbb{H}_1}_{12}V^{\mathbb{H}_2}_{13}):\omega_i\in\Lone{H}{i}\}''&=\{(\id\otimes\id\otimes\omega_1\otimes\omega_2)(U^{\pi^r_1}_{23}V^{\mathbb{H}_1}_{13}U^{\pi^r_2}_{24}V^{\mathbb{H}_2}_{14}):\omega_i\in\Lone{H}{i}\}''\\
&=\{(\id\otimes\id\otimes\omega_1\otimes\omega_2)(U^{\pi^r_1}_{23}U^{\pi^r_2}_{24}V^{\mathbb{H}_1}_{13}V^{\mathbb{H}_2}_{14}):\omega_i\in\Lone{H}{i}\}''
               \end{split}\end{align}
whereas the left (middle) hand side is 
 \begin{align}\label{eq:LHS1}
\{(\theta\otimes\omega)(\ww):\omega\in\Lone{G}\}''=\{(\id\otimes\id\otimes\omega)(U_{23}\ww_{13}):\omega\in\Lone{G}\}''
\end{align}
Applying $(\eta\otimes\id)$ for $\eta\in\Lonehat{G}$ to all elements appearing in \eqref{eq:LHS1} and \eqref{eq:RHS1}, by normality, yields:
\[\begin{split}\{(\eta\otimes\id\otimes\omega_1\otimes\omega_2)(U^{\pi^r_1}_{23}U^{\pi^r_2}_{24}V^{\mathbb{H}_1}_{13}V^{\mathbb{H}_2}_{14}):\omega_i\in\Lone{H}{i}\}''\\
   =\{(\eta\otimes\id\otimes\omega)(U_{23}\ww_{13}):\omega\in\Lone{G}\}''
  \end{split}
\]
Now letting $\eta$ run through the whole set $\Lonehat{G}$, we obtain
\begin{equation}\label{eq:step}
\begin{split}\{(\eta\otimes\id\otimes\omega_1\otimes\omega_2)(U^{\pi^r_1}_{23}U^{\pi^r_2}_{24}V^{\mathbb{H}_1}_{13}V^{\mathbb{H}_2}_{14}):\omega_i\in\Lone{H}{i},\eta\in\Lonehat{G}\}''\\
   =\{(\eta\otimes\id\otimes\omega)(U_{23}\ww_{13}):\omega\in\Lone{G},\eta\in\Lonehat{G}\}''
  \end{split} 
\end{equation}
Remembering that $(\eta\otimes\id)\ww$ generate $\Cnot{G}$ and that $\Cnot{G}\subseteq\BLtwo$ is nondegenerate, observe that the natural action of $\Cnot{G}$ on $\BLtwo_*$ is non-degenerate (cf. \cite[eq. (1.2)]{DKSS}), hence the right-hand side of \eqref{eq:step} reads as:
\begin{equation}\label{eq:RHS2}\begin{split}
\{(\eta\otimes\id\otimes\omega)(U_{23}\ww_{13}):\omega\in\Lone{G},\eta\in\Lonehat{G}\}''\\
 =\{(\eta\otimes\id\otimes\omega)(U_{23}):\omega\in\Lone{G},\eta\in\Lonehat{G}\}''\\
 =\{(\id\otimes\omega)(U):\omega\in\Lone{G}\}''                                
                               \end{split}\end{equation}
and similarly the left-hand side of \eqref{eq:step} turns into
 \begin{equation}\label{eq:LHS2}
  \begin{split}\{(\eta\otimes\id\otimes\omega_1\otimes\omega_2)(U^{\pi^r_1}_{23}U^{\pi^r_2}_{24}V^{\mathbb{H}_1}_{13}V^{\mathbb{H}_2}_{14}):\omega_i\in\Lone{H}{i},\eta\in\Lonehat{G}\}''\\
  =\{(\eta\otimes\id\otimes\omega_1\otimes\omega_2)(U^{\pi^r_1}_{23}U^{\pi^r_2}_{24}):\omega_i\in\Lone{H}{i},\eta\in\Lonehat{G}\}''\\
  =\{(\id\otimes\omega_1\otimes\omega_2)(U^{\pi^r_1}_{12}U^{\pi^r_2}_{23}):\omega_i\in\Lone{H}{i}\}''
     \end{split}
 \end{equation}
since $V^{\HH_i}$ generates $\Cnot{H}{i}$ (see \cite[Definition 3.2, Theorem 3.5 \& Theorem 3.4]{DKSS}). Combining \eqref{eq:step} with \eqref{eq:LHS2} and \eqref{eq:RHS2}, we obtain \eqref{eq:promtheta2}.
\ref{it:promtheta2}$\implies$\ref{it:promtheta3} is obvious, as one specializes $U=\ww$, whereas \ref{it:promtheta3}$\implies$\ref{it:promtheta1} was already shown in \autoref{prop:twosubgroups} (and used as the starting point of the implication \ref{it:promtheta1}$\implies$\ref{it:promtheta2}). 
\end{proof}

Recall that for two representations $U\in\B{H}{U}\vnt\Linf{G}$ and $\tilde{U}\in\B{H}{\tilde{U}}\vnt\Linf{G}$ of a locally compact quantum group $\GG$ we denote by $\Hom_{\GG}(U,\tilde{U})=\{t\in\mathsf{B}(\mathcal{H}_U,\mathcal{H}_{\tilde{U}}):(t\otimes\mathds{1})U=\tilde{U}(t\otimes\mathds{1})\}$ the set of intertwiners between $U$ and $\tilde{U}$. Then one obviously has $\Hom_{\mathbb{G}}(U,\tilde{U})\subseteq\Hom_{\HH}(U^{\pi^r},\tilde{U}^{\pi^r})$ for every closed quantum subgroup $\HH\subset\GG$.
\begin{lemma}\label{lem:selfintertwiners}
With the notation as above, let $t\in\B{H}{U}$. Then $t\in\Hom_{\GG}(U,U)$ if and only if $(\mathds{1}\otimes t)\theta(a)=\theta(a)(\mathds{1}\otimes t)$ for all $a\in\Linfhat{G}$.
\end{lemma}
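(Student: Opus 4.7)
The plan is to lift both sides of the claimed biconditional to a single identity in a three-leg picture and read off the equivalence by inspection. The key ingredient is the formula $(\theta\otimes\id)\ww=U_{23}\ww_{13}$, derived just before the lemma as the specialisation of \eqref{eq:thetaonV} to $\HH=\GG$, $V=\ww$, together with the facts that $\theta$ is normal and that $\{(\id\otimes\omega)\ww:\omega\in\Lone{G}\}$ generates $\Linfhat{G}$ in the $\sigma$-WOT.

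First, by normality of $\theta$ and of left/right multiplication by $\mathds{1}\otimes t$, the condition $(\mathds{1}\otimes t)\theta(a)=\theta(a)(\mathds{1}\otimes t)$ for all $a\in\Linfhat{G}$ is equivalent to the same condition for $a=(\id\otimes\omega)\ww$ as $\omega$ ranges over $\Lone{G}$. Since $\theta(a)=(\id\otimes\id\otimes\omega)(\theta\otimes\id)\ww$ for such $a$, and since the third leg of $(\theta\otimes\id)\ww$ lies in $\Linf{G}$, which is separated by $\Lone{G}$, this in turn is equivalent to the single equation
\[
(\mathds{1}\otimes t\otimes\mathds{1})(\theta\otimes\id)\ww=(\theta\otimes\id)\ww(\mathds{1}\otimes t\otimes\mathds{1}).
\]

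Next, I would substitute $(\theta\otimes\id)\ww=U_{23}\ww_{13}$. Because $\ww_{13}$ sits in legs $1$ and $3$ while $\mathds{1}\otimes t\otimes\mathds{1}$ sits only in leg $2$, these commute; pulling $\ww_{13}$ past $\mathds{1}\otimes t\otimes\mathds{1}$ on the right-hand side and cancelling it by right-multiplying with $\ww_{13}^*$ reduces the displayed identity to
\[
(\mathds{1}\otimes t\otimes\mathds{1})U_{23}=U_{23}(\mathds{1}\otimes t\otimes\mathds{1}),
\]
which (dropping the trivial first leg) is precisely the intertwining condition $(t\otimes\mathds{1})U=U(t\otimes\mathds{1})$, i.e.\ $t\in\Hom_{\GG}(U,U)$. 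Every step is an equivalence, so both directions of the lemma follow simultaneously.

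There is no genuine obstacle; the only point requiring a moment of care is the slicing step in the second paragraph, where one passes between a statement quantified over $a\in\Linfhat{G}$ and a single three-leg identity. This is routine given normality of $\theta$ and density of slices of $\ww$ in $\Linfhat{G}$.
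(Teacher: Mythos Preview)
Your argument is correct, and it is in fact shorter than the paper's. Both proofs reduce the claim to the three-leg identity
\[
(\mathds{1}\otimes t\otimes\mathds{1})(\theta\otimes\id)\ww=(\theta\otimes\id)\ww(\mathds{1}\otimes t\otimes\mathds{1}),
\]
which is exactly the paper's equation \eqref{eq:selfintertwiners1}. From there, however, the paper does \emph{not} invoke the ready-made formula $(\theta\otimes\id)\ww=U_{23}\ww_{13}$ that it had just recorded after \eqref{eq:thetaonV}; instead it expands $\theta$ via its definition $\theta(a)=X(a\otimes\mathds{1})X^*$, flips legs, rewrites the identity as $U_{12}t_1U_{12}^*=\ww_{23}U_{12}t_1U_{12}^*\ww_{23}^*$, recognises the right-hand side as $(\id\otimes\Delta)$ applied to the left, and then appeals to ergodicity of the coproduct (\cite[Theorem 2.6]{MRW12}) to deduce $U(t\otimes\mathds{1})U^*=m\otimes\mathds{1}$ for some $m$, finally showing $m=t$ via a further comparison. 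Your route, by contrast, substitutes $(\theta\otimes\id)\ww=U_{23}\ww_{13}$ directly, commutes $\ww_{13}$ past $t_2$, cancels $\ww_{13}$, and reads off the intertwining relation in one line. This is genuinely more economical: it avoids the ergodicity argument entirely and makes the equivalence manifest. The paper's approach has the minor virtue of being self-contained from the definition of $\theta$, but since the formula you use is already stated explicitly just before the lemma, your shortcut is entirely legitimate and arguably what the authors should have done.
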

\begin{proof}
 $(\implies)$ This follows from the following simple computation, with $a\in\Linfhat{G}$ and $t\in\Mor_{\GG}(U,U)$:
 \[\flip\biggl((\mathds{1}\otimes t)\theta(a)\biggr)=(t\otimes\mathds{1})U(\mathds{1}\otimes a)U^*=U(\mathds{1}\otimes a)(t\otimes\mathds{1})U^*=U(\mathds{1}\otimes a)U^*(t\otimes\mathds{1})=\flip\biggl(\theta(a)(\mathds{1}\otimes t)\biggr)\]
 
 $(\isimplied)$ It is clear (and follows from WOT-density of slices of $\ww^{\GG}=\ww$ in $\Linfhat{G}$) that the assumption is equivalent to:
 \begin{equation}\label{eq:selfintertwiners1} (\mathds{1}\otimes t\otimes\mathds{1})(\theta\otimes\id)(\ww)=(\theta\otimes\id)(\ww)(\mathds{1}\otimes t\otimes\mathds{1})\end{equation}
and using \eqref{eq:partialcoaction1} one develops \eqref{eq:selfintertwiners1} to obtain: 
\begin{equation}
 t_1U_{12}^*\ww_{23}U_{12}=U_{12}^*\ww_{23}U_{12}t_1
\end{equation}
which can be rewritten as
\begin{equation}\label{eq:selfintertwiners2}
U_{12}t_1U_{12}^*=\ww_{23}U_{12}t_1U_{12}^*\ww_{23}^* 
\end{equation}
Observe that the right hand side of \eqref{eq:selfintertwiners2} is nothing but $(\id\otimes\Delta)(U_{12}t_1U_{12}^*)$, so using ergodicity of coproduct (see, e.g. \cite[Theorem 2.6]{MRW12}) we have that
\[U_{12}t_1U_{12}^*\in\B{H}{U}\vnt\mathbb{C}\mathds{1}\vnt\mathbb{C}\mathds{1}\]
Let then $m\in\mathsf{B}$ be such that:
\begin{equation}\label{eq:selfintertwiners3}
U(t\otimes\mathds{1})U^*=m\otimes\mathds{1} 
\end{equation}
and write this as
\begin{equation}\label{eq:selfintertwiners4}U(t\otimes\mathds{1})=(m\otimes\mathds{1})U\end{equation}
Applying $(\id\otimes\Delta)$ to both sides of \eqref{eq:selfintertwiners4} and rearranging the terms one arrives at
\begin{equation}\label{eq:selfintertwiners5}
\B{H}{U}\vnt\mathbb{C}\mathds{1}\vnt\Linf{G}\ni U_{13}t_1U_{13}^*=U_{12}m_1U_{12}^*\in\B{H}{U}\vnt\Linf{G}\vnt\mathbb{C}\mathds{1}
\end{equation}
Hence it follows (see, e.g. \cite[12.4.36]{KadisonRingrose4}) that
\begin{equation}
m_1=U_{13}t_1U_{13}^*=U_{12}m_1U_{12}^*
\end{equation}
and consequently $m\otimes\mathds{1}$ commutes with $U$ and hence $t=m$, which finishes the proof in view of \eqref{eq:selfintertwiners3}.
\end{proof}
\begin{theorem}[{\autoref{thmintro:promofintertwin}}]\label{thm:promintertwin}
 Let $(\HH_i)_{i\in I}\subset\GG$ be a family of closed subgroups. Then $\GG$ is generated by $(\HH_i)_{i\in I}$ if and only if for all representations $U\in\B{H}{U}\vnt\Linf{G}$ and $\tilde{U}\in\B{H}{\tilde{U}}\vnt\Linf{G}$ we have that
 \begin{equation}\label{eq:promintertwin}\Hom_{\GG}(U,\tilde{U})=\bigcap_{i\in I}\Hom_{\HH_i}(U^{\pi^r_i},\tilde{U}^{\pi^r_i})\end{equation}
\end{theorem}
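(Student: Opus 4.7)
The plan is to prove both implications by reducing to a tractable case. For the ``if'' direction, I specialize to the right regular representation $\ww$; for the ``only if'' direction, I reduce to self-intertwiners via the standard $2\times 2$ matrix trick and then invoke the characterizations of self-intertwiners and restrictions provided by \autoref{lem:selfintertwiners} and \autoref{lem:restrictcoincide}.

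For the ``if'' direction, take $U=\tilde U=\ww^{\GG}$. A direct computation using \eqref{eq:Urestrict} and \eqref{eq:reduceddualhomo1} identifies $\ww^{\pi_i^r}$ with the bicharacter $V^{\HH_i}=(\gamma_i\otimes\id)\ww^{\HH_i}$, so that $\End_{\GG}(\ww)=\Linfhat{G}'$ and $\End_{\HH_i}(\ww^{\pi_i^r})=\gamma_i(\Linfhat{H_i})'$ (all commutants taken inside $\BLtwo$). The hypothesized Hom-equality then reads
\[\Linfhat{G}'=\bigcap_{i\in I}\gamma_i(\Linfhat{H_i})'=\Bigl(\bigcup_{i\in I}\gamma_i(\Linfhat{H_i})\Bigr)',\]
so by the bicommutant theorem $\Linfhat{G}=\bigl(\bigcup_{i\in I}\gamma_i(\Linfhat{H_i})\bigr)''$, which by \autoref{def:twosubgroupsgenerateJKS} amounts to $\GG=\overline{\langle\bigcup_{i\in I}\HH_i\rangle}$.

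For the ``only if'' direction, the formula $U^{\pi^r}=(\phi_U\comp\hat{\pi}\otimes\id)\Ww^{\HH}$ implicit in \eqref{eq:Urestrict}, together with the additivity of the correspondence $U\mapsto\phi_U$ from \autoref{thm:kustermans}, implies $(U\oplus\tilde U)^{\pi_i^r}=U^{\pi_i^r}\oplus\tilde U^{\pi_i^r}$. Hence any $t\in\bigcap_i\Hom_{\HH_i}(U^{\pi_i^r},\tilde U^{\pi_i^r})$ appears as the $(2,1)$-block of an element of $\bigcap_i\End_{\HH_i}((U\oplus\tilde U)^{\pi_i^r})$, reducing matters to the self-intertwiner case $U=\tilde U$. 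In this case, let $\theta$ and $\tilde{\theta}_i$ denote the partial actions \eqref{eq:partialcoaction1} attached to $U$ and to $U^{\pi_i^r}$ respectively. By \autoref{lem:selfintertwiners} applied on $\HH_i$, the condition $t\in\End_{\HH_i}(U^{\pi_i^r})$ is equivalent to $(\mathds{1}\otimes t)\tilde{\theta}_i(a)=\tilde{\theta}_i(a)(\mathds{1}\otimes t)$ for all $a\in\Linfhat{H_i}$; applying the injective map $\gamma_i\otimes\id$ and invoking \autoref{lem:restrictcoincide}, this becomes $(\mathds{1}\otimes t)\theta(\gamma_i(a))=\theta(\gamma_i(a))(\mathds{1}\otimes t)$ for all $a\in\Linfhat{H_i}$. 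Collecting over $i\in I$ and using the generation hypothesis $\bigl(\bigcup_i\gamma_i(\Linfhat{H_i})\bigr)''=\Linfhat{G}$ together with normality of $\theta$, this commutation extends to all of $\Linfhat{G}$, and a second application of \autoref{lem:selfintertwiners}, now on $\GG$, concludes $t\in\End_{\GG}(U)$.

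The main obstacle, as I anticipate it, is the combined use of \autoref{lem:restrictcoincide} and \autoref{lem:selfintertwiners}: these are what allow intertwining conditions on the $\HH_i$-level to be rephrased as commutation conditions on the $\GG$-level restricted to the subalgebras $\gamma_i(\Linfhat{H_i})$, thereby making the generation hypothesis directly usable. Everything else -- the bicommutant theorem, injectivity of $\gamma_i\otimes\id$, normality of $\theta$, and the $2\times 2$ matrix trick -- is routine.
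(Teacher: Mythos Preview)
Your proof is correct and follows essentially the same approach as the paper: the ``only if'' direction via the $2\times 2$ matrix reduction to self-intertwiners combined with \autoref{lem:selfintertwiners} and \autoref{lem:restrictcoincide} is identical, and for the ``if'' direction you specialize to $U=\tilde U=\ww$ and take commutants directly, whereas the paper first groups all but one subgroup into a single $\HH_2=\overline{\langle\bigcup_{i\neq 1}\HH_i\rangle}$ and then appeals to \autoref{thm:promtheta}. Your route here is slightly more streamlined but rests on the same bicommutant identification $\End_{\HH_i}(\ww^{\pi_i^r})=\gamma_i(\Linfhat{H_i})'$.
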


\begin{proof}
It is clear (and noted before \autoref{lem:selfintertwiners}) that \[\Hom_{\GG}(U,\tilde{U})\subseteq\bigcap_{i\in I}\Hom_{\HH_i}(U^{\pi^r_i},\tilde{U}^{\pi^r_i})\]
hence the genuine statement is to obtain the converse containment, under assumption that $\GG$ is generated by $(\HH_i)_{i\in I}$.

Assume first that $U=\tilde{U}$ and $t\in\bigcap_{i\in I}\Hom_{\HH_i}(U^{\pi^r_i},U^{\pi^r_i})$. Then \[(t\otimes\mathds{1})\tilde{\theta}^i(a)=\tilde{\theta}^i(a)(t\otimes\mathds{1})\] for all $i\in I$ and $a\in\Linfhat{H}{i}$, where $\tilde{\theta}^i$ is as \autoref{lem:restrictcoincide} for $\HH_i\subset\GG$. But \autoref{lem:restrictcoincide}, together with the assumption $\Linfhat{G}=\bigl(\bigcup_{i\in I}\gamma_i(\Linfhat{H}{i})\bigr)''$ and normality of all the maps involved gives us that \[(t\otimes\mathds{1})\theta(a)=\theta(a)(t\otimes\mathds{1})\] for all and $a\in\Linfhat{G}$, hence we conclude by \autoref{lem:selfintertwiners} that $t\in\Hom_{\GG}(U,U)$

If now $U$ and $\tilde{U}$ are arbitrary, one can consider $U\oplus\tilde{U}$. Observe that then $t\in\Hom_{\GG}(U,\tilde{U})$ if and only if 
\[T=\begin{pmatrix}
  0 & 0\\
  t & 0
  \end{pmatrix} \in \Hom_{\GG}(U\oplus\tilde{U},U\oplus\tilde{U})
\]
To conclude \eqref{eq:promintertwin}, we apply the first part of the proof to $U\oplus\tilde{U}$ and $T$. The only things that one needs to verify is that $(U\oplus\tilde{U})^{\pi^r_i}=U^{\pi^r_i}\oplus\tilde{U}^{\pi^r_i}$ (which is clear in view of \eqref{eq:corepsum}) and that the block form of $T$ remains after we restrict $U\oplus\tilde{U}$ to any $\HH_i$, which is again obvious.

To prove the other implication, let us pick $1\in I$ and consider $\HH_1\subset\GG$ and $\HH_2=\overline{\bigl\langle\bigcup_{i\in I\setminus\{1\}}\HH_j\bigr\rangle}$. It is now enough to take \eqref{eq:promintertwin} with $U=\tilde{U}=\ww$ to arrive at (the commutant of) \eqref{eq:promtheta3}. We are done thanks to \autoref{thm:promtheta}. 
\end{proof}

Let us note the following
\begin{corollary}\label{cor:prominv}
 Assume that $\GG$ is generated by $(\HH_i)_{i\in I}$ and let $U\in\B{H}{U}\vnt\Linf{G}$ be a representation of $\GG$. If $\mathcal{K}\subseteq\mathcal{H}$ is preserved by each $\HH_i$, then it is preserved by $\GG$.
\end{corollary}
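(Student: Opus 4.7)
The plan is to reduce the statement to \autoref{thm:promintertwin} by encoding invariance of a closed subspace as a self-intertwiner condition on the orthogonal projection. Let $p\in\B{H}{U}$ be the orthogonal projection onto $\mathcal{K}$. The first observation is that for a unitary representation $V\in\B{H}{U}\vnt\Linf{H}$ of a locally compact quantum group $\HH$, the subspace $\mathcal{K}$ is preserved by $V$ if and only if $p\otimes\mathds{1}$ commutes with $V$, equivalently $p\in\Hom_{\HH}(V,V)$. Indeed, invariance of $\mathcal{K}$ amounts to $V(p\otimes\mathds{1})=(p\otimes\mathds{1})V(p\otimes\mathds{1})$, and unitarity of $V$ promotes this to the full commutation $V(p\otimes\mathds{1})=(p\otimes\mathds{1})V$, since it forces invariance of $\mathcal{K}^\perp$ as well.

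Next I would interpret the phrase \emph{``$\mathcal{K}$ is preserved by $\HH_i$''} in the statement as meaning that $\mathcal{K}$ is invariant under the restricted representation $U^{\pi_i^r}\in\B{H}{U}\vnt\Linf{H}{i}$ of $\HH_i$. By the observation above, this is precisely the assertion that $p\in\Hom_{\HH_i}(U^{\pi_i^r},U^{\pi_i^r})$ for every $i\in I$, so that
\[p\in\bigcap_{i\in I}\Hom_{\HH_i}(U^{\pi_i^r},U^{\pi_i^r}).\]

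Now I would invoke \autoref{thm:promintertwin} with $\tilde{U}=U$: since $\GG$ is generated by $(\HH_i)_{i\in I}$, the intersection on the right-hand side of \eqref{eq:promintertwin} coincides with $\Hom_{\GG}(U,U)$, hence $p\in\Hom_\GG(U,U)$. Applying the equivalence from the first paragraph once more, now in the other direction and for the representation $U$ of $\GG$, we conclude that $\mathcal{K}$ is preserved by $U$, i.e.\ by $\GG$. The argument is essentially a direct application of \autoref{thm:promintertwin}; the only genuine input beyond it is the standard identification between invariant subspaces and self-intertwining projections, and no step should present a real obstacle.
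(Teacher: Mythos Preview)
Your proof is correct and follows exactly the approach in the paper: the paper's proof also observes that $\mathcal{K}$ is preserved by each $\HH_i$ iff the orthogonal projection $p_{\mathcal{K}}$ lies in $\Hom_{\HH_i}(U^{\pi^r_i},U^{\pi^r_i})$, and then invokes \autoref{thm:promintertwin}. The only difference is that you spell out the standard equivalence between invariant subspaces and self-intertwining projections, which the paper leaves implicit.
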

\begin{proof}
$\mathcal{K}\subseteq\mathcal{H}$ is preserved by each $\HH_i$ if and only if $p_{\mathcal{K}}\in\Mor_{\HH_i}(U^{\pi^r_i},U^{\pi^r_i})$. We are done thanks to \autoref{thm:promintertwin}.
\end{proof}

Now let us remark the following:
\begin{theorem}
Let $\mathbb{G}$ be a compact quantum group.
\begin{enumerate}
 \item The quantum group $\GG$ is topologically generated by the quantum subgroups $\HH_1$ and $\HH_2$ (in the sense of \cite{BCV}) if and only if $\GG=\overline{\langle\HH_1,\HH_2\rangle}$ (in the sense of \autoref{def:twosubgroupsgenerateJKS}).
 \item The family of Hopf quotients $C^u(\GG)\to C^u(\HH_i)$ forms a jointly full family (in the sense of \cite{ChirvaRFD}) if and only if $\GG=\overline{\langle\bigcup_{i\in I}\HH_i\rangle}$ (in the sense of \autoref{def:twosubgroupsgenerateJKS}).
\end{enumerate}
\end{theorem}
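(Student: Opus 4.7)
The plan is to reduce both parts of the theorem to \autoref{thm:promintertwin} by translating the BCV and Chirvasitu notions into intertwiner-space equalities.

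First I would simplify the right-hand side of each ``iff''. Because $\GG$ is compact, the discussion at the end of \autoref{sec:homosubgr} tells us that every $\widehat{\Delta}$-invariant von Neumann subalgebra of $\Linfhat{G}$ is automatically a Baaj--Vaes subalgebra (the Haar state is tracial on the dual side, which forces $\widehat{\tau}_t$-invariance, hence $\widehat{R}$-invariance). In particular, $\overline{\langle\bigcup_{i\in I}\HH_i\rangle}$ is simply the quantum subgroup dual to the smallest invariant von Neumann subalgebra of $\Linfhat{G}$ containing all $\gamma_i(\Linfhat{H}{i})$, i.e.\ $\GG=\overline{\langle\bigcup_{i\in I}\HH_i\rangle}$ is equivalent to $\bigl(\bigcup_{i\in I}\gamma_i(\Linfhat{H}{i})\bigr)''=\Linfhat{G}$, which is the hypothesis of \autoref{thm:promintertwin}.

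Second, I would recall the target definitions and show they say exactly the intertwiner condition of \autoref{thm:promintertwin}. The BCV notion of topological generation by $\HH_1,\HH_2$ is the equality
\[
\Hom_{\GG}(U,V)=\Hom_{\HH_1}(U|_{\HH_1},V|_{\HH_1})\cap\Hom_{\HH_2}(U|_{\HH_2},V|_{\HH_2})
\]
for all representations $U,V$ of $\GG$, while joint fullness of the family of Hopf quotients $\Cunot{G}\to\Cunot{H}{i}$ in the sense of \cite{ChirvaRFD} asserts the analogous equality
\[
\Hom_{\GG}(U,V)=\bigcap_{i\in I}\Hom_{\HH_i}(U|_{\HH_i},V|_{\HH_i}).
\]
Both are statements about the full representation categories and not just about a single fundamental object, so no Tannakian generation argument is needed.

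Third, the essential technical point is to identify $U^{\pi_i^r}$ with the usual restriction of $U$ to $\HH_i$. This is immediate from \eqref{eq:Urestrict}: writing $U=(\varphi\otimes\id)\Ww^{\GG}$ with $\varphi\in\Mor(\Cunothat{G},\mathsf{B})$, one has $U^{\pi_i^r}=(\varphi\comp\widehat{\pi_i}\otimes\id)\Ww^{\HH_i}$, and by \autoref{thm:kustermans} this is precisely the representation of $\HH_i$ corresponding to the composition of $\varphi$ with the dual Hopf morphism, i.e.\ the classical restriction. With this identification, the BCV and Chirvasitu equalities become literally the equality in \autoref{thm:promintertwin}, and both parts follow.

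The main obstacle is the bookkeeping around matching definitions -- in particular verifying that the ``classical'' restriction functor on $\Rep(\GG)$ used in \cite{BCV,ChirvaRFD} coincides on objects and on Hom-spaces with the $\beta$-restriction of \autoref{def:betarestriction}; but once one reads off \eqref{eq:Urestrict} this becomes transparent. After that, the theorem is literally a rephrasing of \autoref{thm:promintertwin}.
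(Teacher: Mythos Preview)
Your approach is essentially the paper's: both parts are reduced to \autoref{thm:promintertwin}, with the Chirvasitu case being a literal restatement and the BCV case going through the intertwiner characterisation. Two remarks, though.

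Your first paragraph is both unnecessary and incorrectly justified. The dual Haar weight of a compact quantum group is \emph{not} tracial in general (only in the Kac case), so that sentence is wrong. More importantly, you do not need compactness at all here: each $\gamma_i(\Linfhat{H}{i})$ is already a Baaj--Vaes subalgebra of $\Linfhat{G}$, and since $\widehat{R}$, $\widehat{\tau}_t$ and $\widehat{\Delta}$ preserve each of them, they preserve the von Neumann algebra they jointly generate. Hence $\bigl(\bigcup_i\gamma_i(\Linfhat{H}{i})\bigr)''$ is automatically Baaj--Vaes, which is exactly why \autoref{def:twosubgroupsgenerateJKS} is phrased the way it is. So the equivalence you extract in that paragraph holds for any $\GG$, and the paragraph can simply be deleted.

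For part (1) you assert that the BCV notion of topological generation \emph{is} the intertwiner equality. Be careful: the paper's own proof does not say this, but rather points to \cite[Remark~3]{BCV}, where the passage from BCV's definition to the intertwiner condition is explained using \cite[Corollary~8.2]{BB10} (the compact predecessor of \autoref{thm:promintertwin}). So the intertwiner equality may be a \emph{characterisation} in BCV rather than the definition; you should check the actual definition in \cite{BCV} and, if it differs, invoke their Remark~3 for the translation step. Once that is done, your argument and the paper's coincide.
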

\begin{proof}
 \begin{enumerate}
  \item The proof was outlined in \cite[Remark 3]{BCV} and relies on \autoref{thm:promintertwin} which extends \cite[Corollary 8.2]{BB10} outside the compact realm.
  \item The way \emph{joint fullness} is phrased in \cite[Definition 2.15]{ChirvaRFD} is precisely the conclusion of \autoref{thm:promintertwin}.
 \end{enumerate}
 \end{proof}
\section{Examples and applications}\label{sec:examples}
\subsection{Classical subsets in quantum groups}
Fix a morphism $\beta\in \Mor(C_0^u(\mathbb{G}), \mathnormal{B})$ and assume that the $C^{\ast}$-algebra $\mathnormal{B}$ is commutative. Then from general theory of $C^{\ast}$-algebras it follows that $\beta$ factors through the abelianization of $C_0^u(\mathbb{G})$. The latter was identified in \cite{daws,KN13} as a closed quantum subgroup of $\GG$: the group of characters of $\GG$, or the maximal classical subgroup of $\GG$, which we denote by $G$. Thus the Hopf image of the morphism $\beta$ is not bigger than $G$, and it is precisely the subgroup of $G$ generated by the image of the spectrum of $\mathnormal{B}$ under the Gelfand dual map of $\beta$, as one can easily check (\cite[Theorem 3.5]{PJphd}). 

\subsection{Quantum \texorpdfstring{``$az+b$''}{``az+b''} groups}

The quantum ``$az+b$'' groups were introduced in \cite{azb} and \cite{nazb} are quantum deformations of the group of affine transformations of $\mathbb{C}$. The construction of a quantum ``$az+b$'' group begins with a choice of a complex deformation parameter $q$ from a certain set (see \cite{nazb}). The parameter determines a multiplicative subgroup $\Gamma$ of $\mathbb{C}\setminus\{0\}$ and we let $\overline{\Gamma}$ be the closure of $\Gamma$ in $\mathbb{C}$ which is $\Gamma\cup\{0\}$. The $C^{\ast}$-algebra $\Cnot{G}$ is then isomorphic to the crossed product $C_0(\overline{\Gamma})\rtimes\Gamma$ with the action of $\Gamma$ on $\overline{\Gamma}$ given by multiplication of complex numbers. Let $b$ be the image under the natural morphism $C_0(\overline{\Gamma})\to{C_0(\overline{\Gamma})\rtimes\Gamma}=\Cnot{G}$ of the element $z$ affiliated with $C_0(\overline{\Gamma})$ given by $z(\gamma)=\gamma$ for all $\gamma\in\overline{\Gamma}$. Then $b$ is normal and its spectrum is equal to $\overline{\Gamma}$. Furthermore let $\{U_\gamma\}_{\gamma\in\Gamma}$ be the family of unitaries in $\mult{\Cnot{G}}$ implementing the action of $\Gamma$. Then for any $\gamma\in\Gamma$ we have $U_\gamma{b}U_{\gamma}^*=\gamma{b}$. 

The quantum group $\GG$ is coamenable (\cite[Section 6.2]{nazb}), so $\Cnot{G}=\Cunot{G}$ and so any morphism $\beta\in\Mor(\Cunot{G},\mathnormal{B})$ is determined by a covariant representation of the dynamical system $(\overline{\Gamma},\Gamma)$. Using this one can show that $\beta$ is either injective, or it has commutative image. In the latter case the Hopf image is a classical group, so there is no generating morphism $\beta$ which is not an isomorphism. In the dual language we can formulate this by saying that the quantum ``$az+b$'' group does not contain a proper subset which generates it.

\subsection{Double groups}

The procedure of constructing so called ``double groups'' in the $C^{\ast}$-algebraic framework goes back to the paper \cite{qlorentz} where a quantum deformation of the group $\mathrm{SL}(2,\mathbb{C})$ was constructed via the double group construction applied to the quantum $\mathrm{SU}(2)$ group. The construction which we will very briefly recall below yields always a non-compact locally compact quantum group and various examples have been presented in literature (see e.g.~\cite{double}). Let us note that in \cite{MNW03} the authors use an alternative name for the double group construction, namely ``quantum codouble''. 

The very rough view of the construction of the double group built over a locally compact quantum group $\KK$ is as follows: the $C^{\ast}$-algebra $\Cnot{G}$ is defined as $\Cnot{K}\otimes\Cnothat{K}$ with comultiplication $\Delta_\GG$ defined on simple tensors by
\[
\Delta_\GG(a\otimes{b})=\ww^{\KK}_{23}\Delta_{\KK}(a)_{13}\Delta_{\widehat{\KK}}(b)_{24}{\ww^{\KK}_{23}}\!^*.
\]
It turns out that $\GG$ defined by $(\Cnot{G},\Delta_\GG)$ is a locally compact quantum group. Moreover $\KK$ and $\widehat{\KK}$ are both closed quantum subgroups of $\GG$. If we let $\gamma_\KK$ and $\gamma_{\widehat{\KK}}$ be the corresponding embeddings $\Linfhat{K}\to\Linfhat{G}$ and $\Linf{K}\to\Linfhat{G}$ then by \cite[Section 4]{yamanouchi} (see in particular \cite[Corollary 4.12 and Formula (QD7)]{yamanouchi}) $\Linfhat{G}$ is generated by the images of $\gamma_\KK$ and $\gamma_{\widehat{\KK}}$. It follows from \autoref{prop:twosubgroups} that $\GG$ is generated by $\KK$ and $\widehat{\KK}$.

\subsection{Rieffel deformations}
Rieffel deformation is a procedure which allows one to deform a locally compact group into a quantum group. In order to perform it one needs to fix an abelian closed subgroup $\Gamma$ of a locally compact group $G$ and a 2-cocycle $\Psi$ on $\widehat{\Gamma}$. The resulting quantum group is denoted by $\GG^{\Psi}$, see \cite{PKRieffel,Rieffel93,Rieffel95,EnockVainerman} for further informations on Rieffel deformations. As noted in \cite[Section 3]{PKRieffel}, if $\Gamma\subset H\subset G$ is a chain of closed subgroups, then $\HH^{\Psi}\subset\GG^{\Psi}$. 

Now suppose that $\overline{\langle H_1,H_2\rangle}=G$ and that $\Gamma\subset H_1\cap H_2$. It follows from \cite[Theorem 4.12]{PKRieffelCP} and \autoref{prop:twosubgroups} that $\overline{\langle\HH_1^{\Psi},\HH_2^{\Psi}\rangle}=\GG^{\Psi}$. A particular example of this construction is obtained by taking $G=\mathrm{SL}(2,\mathbb{C})$, $H_1$ -- the group of upper-triangular matrices and $H_2$ are the lower-triangular matrices; $\Gamma=H_1\cap H_2\cong\mathbb{C}^{\times}$ is the group of diagonal matrices. For the 2-cocycle $\Psi$ one can take a cocycle described in \cite[Section 5]{PKRieffelCP}.

\section*{Acknowledgement}
 PJ, PK and PMS were partially supported by the NCN (National Center of Science) grant no. 2015/17/B/ST1/00085. PJ would like to express his gratitude to Alex Chirvasitu for asking questions that led to the answers contained in \autoref{sec:intertwiners}, as well as polishing the presentation of these results compared to the one given in \cite[Section 3.2.3]{PJphd}.
\bibliographystyle{alpha}
\bibliography{JKS}

\end{document}